 \newtheorem{Theorem}{Theorem}[section]
 \newtheorem{Corollary}[Theorem]{Corollary}
 \newtheorem{Lemma}[Theorem]{Lemma}
 \newtheorem{Proposition}[Theorem]{Proposition}
 \newtheorem{Question}[Theorem]{Question}
 \newtheorem{Definition}[Theorem]{Definition}
 \newtheorem{Remark}[Theorem]{Remark}
 \numberwithin{equation}{section}
\begin{document}

\title[Boundary points, minimal $L^2$ integrals and concavity property \uppercase\expandafter{\romannumeral4}]
 {Boundary points, minimal $L^2$ integrals and concavity property \uppercase\expandafter{\romannumeral4}---fibrations over open Riemann surfaces}


\author{Qi'an Guan}
\address{Qi'an Guan: School of
Mathematical Sciences, Peking University, Beijing 100871, China.}
\email{guanqian@math.pku.edu.cn}

\author{Zhitong Mi}
\address{Zhitong Mi: Institute of Mathematics, Academy of Mathematics
and Systems Science, Chinese Academy of Sciences, Beijing, China
}
\email{zhitongmi@amss.ac.cn}

\author{Zheng Yuan}
\address{Zheng Yuan: School of
Mathematical Sciences, Peking University, Beijing 100871, China.}
\email{zyuan@pku.edu.cn}

\thanks{}

\subjclass[2020]{32Q15, 32F10, 32U05, 32W05}

\keywords{minimal $L^2$ integrals, plurisubharmonic
functions, boundary points, weakly pseudoconvex K\"ahler manifold}

\date{\today}

\dedicatory{}

\commby{}


\begin{abstract}
In this article, we consider the minimal $L^2$ integrals related to modules at boundary points on fibrations over open Riemann surfaces,
and present a characterization for the concavity property of the minimal $L^2$ integrals degenerating to linearity.
\end{abstract}

\maketitle
\section{Introduction}
The strong openness property of multiplier ideal sheaves \cite{GZSOC}
(conjectured by Demailly \cite{DemaillySoc}) is an important feature of multiplier ideal sheaves and has opened the door to new types of approximation technique (see e.g. \cite{GZSOC,McNeal and Varolin,K16,cao17,cdM17,FoW18,DEL18,ZZ2018,GZ20,ZZ2019,ZhouZhu20siu's,FoW20,KS20,DEL21}),
where the multiplier ideal sheaf $\mathcal{I}(\varphi)$ is the sheaf of germs of holomorphic functions $f$ such that $|f|^2e^{-\varphi}$ is locally integrable (see e.g. \cite{Tian,Nadel,Siu96,DEL,DK01,DemaillySoc,DP03,Lazarsfeld,Siu05,Siu09,DemaillyAG,Guenancia}),
and $\varphi$ is a plurisubharmonic function on a complex manifold $M$ (see \cite{Demaillybook}).

Guan-Zhou \cite{GZSOC} proved the strong openness property (the 2-dimensional case was proved by Jonsson-Musta\c{t}\u{a} \cite{JonssonMustata}).
Using the strong openness property, Guan-Zhou \cite{GZeff} gave a proof of the following conjecture posed by Jonsson-Musta\c{t}\u{a} (see \cite{JonssonMustata}).

\textbf{Conjecture J-M}: If $c_o^F(\psi)<+\infty$, $\frac{1}{r^2}\mu(\{c_o^F(\psi)\psi-\log|F|<\log r\})$ has a uniform positive lower bound independent of $r\in(0,1)$, where $\mu$ is the Lebesgue measure on $\mathbb{C}^n$,
and $c_o^F(\psi):=\sup\{c\ge0:|F|^2e^{-2c\psi}$ is $L^1$ on a neighborhood of $o\}$ is the jumping number (see \cite{JonssonMustata}).
	
Recall that Jonsson-Musta\c{t}\u{a} \cite{JonssonMustata} posed Conjecture J-M, and proved the 2-dimensional case,
which deduced the 2-dimensional strong openness property. It is natural to ask: can one find a proof of Conjecture J-M independent of the strong openness property?

In \cite{BGY-boundary}, Bao-Guan-Yuan considered the minimal $L^2$ integrals related to modules at boundary points of the sublevel sets of plurisubharmonic functions on pseudoconvex domains,
and established a concavity property of the minimal $L^2$ integrals,
which deduced a proof of Conjecture J-M independent of the strong openness property.
In \cite{GMY-boundary2}, Guan-Mi-Yuan generalized the concavity property to weakly pseudoconvex K\"{a}hler manifolds.
	
	Note that the linearity is a degenerate concavity. It is natural to ask:	
	\begin{Question}
\label{Q:chara}
		How to characterize the concavity property  degenerating to linearity?
	\end{Question}
In \cite{GMY-boundary3}, Guan-Mi-Yuan  gave an answer to Question \ref{Q:chara} for the case of open Riemann surfaces.
	
In this article, we give an answer to Question \ref{Q:chara} for the case of fibrations over open Riemann surfaces.
\subsection{Main result}\label{sec:Main result}
Let $\Omega$  be an open Riemann surface, which admits a nontrivial Green function $G_{\Omega}$. Let $Y$ be an $n-1$ dimensional weakly pseudoconvex K\"ahler manifold, and let $K_Y$ be the canonical  line bundle on $Y$. Let $M=\Omega \times Y$ be an $n-$dimensional complex manifold. Let $\pi_{1}$ and $\pi_2$ be the natural projections from $M$ to $\Omega$ and $Y$ respectively. Let $K_M$ be the canonical line bundle on $M$.

  Let $\psi$ be a subharmonic function on $\Omega$. Let $\varphi_\Omega$ be a Lebesgue measurable function on $\Omega$ such that $\varphi_\Omega+\psi$ is subharmonic function on $\Omega$. Let $F$ be a holomorphic function on $\Omega$. Let $T\in [-\infty,+\infty)$.
Denote that
$$\tilde{\Psi}:=\min\{\psi-2\log|F|,-T\}.$$
For any $z \in \Omega$ satisfying $F(z)=0$,
we set $\tilde{\Psi}(z)=-T$. Denote that $\Psi:=\pi_1^*(\tilde\Psi)$ on $M$. Let $\varphi_Y$ be a plurisubharmonic function on $Y$. Denote that $\varphi:=\pi_1^*(\varphi_{\Omega})+\pi_2^*(\varphi_Y)$.

Let $p\in M$ be a point. Denote that $\tilde{J}(\Psi)_{p}:=\{f\in\mathcal{O}(\{\Psi<-t\}\cap V): t\in \mathbb{R}$ and $V$ is a neighborhood of $p\}$. We define an equivalence relation $\backsim$ on $\tilde{J}(\Psi)_{p}$ as follows: for any $f,g\in \tilde{J}(\Psi)_{p}$,
we call $f \backsim g$ if $f=g$ holds on $\{\Psi<-t\}\cap V$ for some $t\gg T$ and open neighborhood $V\ni p$.
Denote $\tilde{J}(\Psi)_{p}/\backsim$ by $J(\Psi)_{p}$, and denote the equivalence class including $f\in \tilde{J}(\Psi)_{p}$ by $f_{p}$.

If $p\in \cap_{t>T} \{\Psi<-t\}$, then $J(\Psi)_{p}=\mathcal{O}_{M,p}$ (the stalk of the sheaf $\mathcal{O}_{M}$ at $p$), and $f_{p}$ is the germ $(f,p)$ of holomorphic function $f$. If $p\notin \cap_{t>T} \overline{\{\Psi<-t\}}$, then  $J(\Psi)_{p}$ is trivial.

Let $f_{p},g_{p}\in J(\Psi)_{p}$ and $(h,p)\in \mathcal{O}_{M,p}$. We define $f_{p}+g_{p}:=(f+g)_{p}$ and $(h,p)\cdot f_{p}:=(hf)_{p}$.
Note that $(f+g)_{p}$ and $(hf)_{p}$ ($\in J(\Psi)_{p}$) are independent of the choices of the representatives of $f,g$ and $h$. Hence $J(\Psi)_{p}$ is an $\mathcal{O}_{M,p}$-module.

For $f_{p}\in J(\Psi)_{p}$ and $a,b\ge 0$, we call $f_{p}\in I\big(a\Psi+b\varphi\big)_{p}$ if there exist $t\gg T$ and a neighborhood $V$ of $p$,
such that $\int_{\{\Psi<-t\}\cap V}|f|^2e^{-a\Psi-b\varphi}dV_M<+\infty$, where $dV_M$ is a continuous volume form on $M$.
Note that $I\big(a\Psi+b\varphi\big)_{p}$ is an $\mathcal{O}_{M,p}$-submodule of $J(\Psi)_{p}$.

 Let ${Z}_0\subset M$ be a subset of $\cap_{t>T} \overline{\{\Psi<-t\}}$ such that there exists a subset $\tilde Z_0$ of $\Omega$ such that  $Z_0=\tilde{Z}_0\times Y$. Denote that $\tilde{Z}_1:=\{z\in \tilde{Z}_0:v(dd^c(\psi),z)\ge2ord_{z}(F)\}$ and $\tilde{Z}_2:=\{z\in \tilde{Z}_0:v(dd^c(\psi),z)<2ord_{z}(F)\},$ where $d^c=\frac{\partial-\bar\partial}{2\pi\sqrt{-1}}$ and $v(dd^c(\psi),z)$ is the Lelong number of $dd^c(\psi)$ at $z$ (see \cite{Demaillybook}).
  Denote that  $\tilde{Z}_3:=\{z\in \tilde{Z}_0:v(dd^c(\psi),z)>2ord_{z}(F)\}$. Note that $\{\tilde{\Psi}<-t\}\cup \tilde{Z}_3$ is an open Riemann surface for any $t\ge T$. Denote $Z_1:=\tilde{Z}_1\times Y$, $Z_2:=\tilde{Z}_2\times Y$ and $Z_3:=\tilde{Z}_3\times Y$ respectively.

Let $c(t)$ be a positive function on $(T,+\infty)$ such that $c(t)e^{-t}$ is decreasing on $(T,+\infty)$, $c(t)e^{-t}$ is integrable near $+\infty$, and $c(-\Psi)e^{-\varphi}$ has a positive lower bound on $K\cap\{\Psi<-T\}$ for any compact subset $K\subset M\backslash \pi_1^{-1}(E)$, where $E$  is an analytic subset of $\Omega$ such that $E\subset\{\tilde\Psi=-\infty\}$.

Let $f$ be a holomorphic $(n,0)$ form on $\{\Psi<-t_0\}\cap V$, where $V\supset Z_0$ is an open subset of $M$ and $t_0>T$
is a real number.
Denote
\begin{equation}
\label{def of g(t) for boundary pt}
\begin{split}
\inf\Bigg\{ \int_{ \{ \Psi<-t\}}|\tilde{f}|^2&e^{-\varphi}c(-\Psi): \tilde{f}\in
H^0(\{\Psi<-t\},\mathcal{O} (K_{M})  ) \\
&\&\, (\tilde{f}-f)_{p}\in
\mathcal{O} (K_{M})_{p} \otimes I(\varphi+\Psi)_{p}\text{ for any }  p\in Z_0 \Bigg\}
\end{split}
\end{equation}
by $G(t;c,\Psi,\varphi,I(\varphi+\Psi),f)$, where $t\in[T,+\infty)$ and $|f|^2:=(\sqrt{-1})^{n^2}f\wedge \bar{f}$ for any $(n,0)$ form $f$.
Without misunderstanding, we denote $G(t;c,\Psi,\varphi,I(\varphi+\Psi),f)$ by $G(t)$ for simplicity.

Recall that $G(h^{-1}(r))$ is concave with respect to $r$ (see \cite{GMY-boundary3}, see also Theorem \ref{thm:concavity}), where $h(t)=\int_t^{+\infty}c(s)e^{-s}ds$ for any $t\ge T$. In this article, we give a characterization of the concavity degenerating to linearity.

 Assume that $\tilde Z_3$ is  finite, and denote that $\tilde Z_3=\{z_1,z_2,\ldots,z_m\}$.  Let $w_j$ be a local coordinate on a neighborhood $V_{z_j}\Subset\Omega$ of $z_j$ satisfying $w_j(z_j)=0$ for any $j\in\{1,2,\ldots,m\}$, where $V_{z_j}\cap V_{z_k}=\emptyset$ for any $j\not=k.$
We give an answer to Question \ref{Q:chara} for the case of fibrations over open Riemann surfaces as follows.

\begin{Theorem}
	\label{thm:fibra-finite}
	For any $z\in \tilde Z_1$, assume that one of the following conditions holds:
	
	$(A)$ $\varphi_{\Omega}+a\psi$ is subharmonic near $z$ for some $a\in[0,1)$;
	
	$(B)$ $(\psi-2q_z\log|w|)(z)>-\infty$, where $q_z=\frac{1}{2}v(dd^c(\psi),z)$ and $w$ is a local coordinate on a neighborhood of $z$ satisfying that $w(z)=0$.
	
	If there exists $t_1\ge T$ such that $G(t_1)\in(0,+\infty)$, then $G(h^{-1}(r))$ is linear with respect to $r\in(0,\int_T^{+\infty}c(s)e^{-s}ds)$ if and only if the following statements hold:
	
	$(1)$ $f=\pi_1^*(a_jw_j^{k_j}dw_j)\wedge\pi_2^*(f_Y)+f_j$ on $(V_{z_j}\times Y)\cap\{\Psi<-t_0\}\cap V$ for any $j\in\{1,2,\ldots,m\}$, where $a_j\in\mathbb{C}\backslash\{0\}$, $k_j$ is a nonnegative integer, $f_Y$ is a holomorphic $(n-1,0)$ form on $Y$ satisfying $\int_Y|f_Y|^2e^{-\varphi_Y}\in(0,+\infty)$, and $(f_j)_p\in
\mathcal{O} (K_{\Omega})_{p} \otimes I(\varphi+\Psi)_{p}\text{ for any }  p\in {z_j}\times Y$;
	
	$(2)$ $\varphi_{\Omega}+\psi=2\log|g|+2\log|F|$, where $g$ is a holomorphic function on $\{\tilde\Psi<-T\}\cup \tilde Z_3\subset\Omega$ such that $ord_{z_j}(g)=k_j+1$ for any $j\in\{1,2,\ldots,m\}$;
		
	$(3)$ $\tilde Z_3\not=\emptyset$ and $\psi=2\sum_{1\le j\le m}\big(q_{z_j}-ord_{z_j}(F)\big)G_{\Omega_t}(\cdot,z_j)+2\log|F|-t$ on $\Omega_t$ for any $j\in\{1,2,\ldots,m\}$, where $\Omega_t=\{\tilde\Psi<-t\}\cup \tilde Z_3\subset\Omega$  and $G_{\Omega_t}$ is the Green function on $\Omega_t$;
	
	$(4)$ $\frac{q_{z_j}-ord_{z_j}(F)}{ord_{z_j}(g)}\lim_{z\rightarrow z_j}\frac{dg}{a_jw_j^{k_j}dw_j}=c_0$ for any $j\in\{1,2,\ldots,m\}$, where $c_0\in\mathbb{C}\backslash\{0\}$ is a constant independent of $j$.
\end{Theorem}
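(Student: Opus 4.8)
The plan is to reduce the fibration problem to the one-dimensional case over $\Omega$ treated in \cite{GMY-boundary3}, exploiting the product structure $M=\Omega\times Y$ throughout. The starting observation is that, since $\Psi=\pi_1^*\tilde\Psi$ depends only on the $\Omega$-factor, every sublevel set splits as $\{\Psi<-t\}=\{\tilde\Psi<-t\}\times Y$, while $K_M\cong\pi_1^*K_\Omega\otimes\pi_2^*K_Y$. Fixing an orthonormal basis $\{f_{Y,i}\}_i$ of the Hilbert space $H:=\{u\in H^0(Y,\mathcal{O}(K_Y)):\int_Y|u|^2e^{-\varphi_Y}<+\infty\}$, any holomorphic $(n,0)$ form on $\{\Psi<-t\}$ decomposes as $\sum_i\pi_1^*(\alpha_i)\wedge\pi_2^*(f_{Y,i})$ with $\alpha_i$ holomorphic $(1,0)$ forms on $\{\tilde\Psi<-t\}$; in particular I write $f=\sum_i\pi_1^*(h_i)\wedge\pi_2^*(f_{Y,i})$ near $Z_0$. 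Because $\varphi+\Psi=\pi_1^*(\varphi_\Omega+\tilde\Psi)+\pi_2^*\varphi_Y$ splits, Fubini together with the orthonormality of $\{f_{Y,i}\}$ turns the integral in \eqref{def of g(t) for boundary pt} into $\sum_i\int_{\{\tilde\Psi<-t\}}|\alpha_i|^2e^{-\varphi_\Omega}c(-\tilde\Psi)$.

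First I would prove the product formula
\[
G(t)=\sum_i G_\Omega\big(t;c,\tilde\Psi,\varphi_\Omega,I(\varphi_\Omega+\tilde\Psi),h_i\big),
\]
where the summands are the one-dimensional minimal $L^2$ integrals on $\Omega$ with initial data $h_i$. This amounts to two decoupling statements: that the constraint $(\tilde f-f)_p\in\mathcal{O}(K_M)_p\otimes I(\varphi+\Psi)_p$ for $p\in Z_0$ is equivalent to $(\tilde h_i-h_i)_z\in\mathcal{O}(K_\Omega)_z\otimes I(\varphi_\Omega+\tilde\Psi)_z$ for every $i$ and every $z\in\tilde Z_0$ (which uses that the $f_{Y,i}$ are $L^2$ on $Y$, so membership in the multiplier module Fubini-factors), and that the infimum is attained by a decomposition respecting the orthonormal basis, so that no cross terms lower the value. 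I expect this product formula to be the main obstacle, since it requires that both the admissibility condition and the minimization decouple exactly along $\{f_{Y,i}\}$.

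Granting the product formula, the characterization follows structurally. By Theorem \ref{thm:concavity} each $G_\Omega(h^{-1}(r);\ldots,h_i)$ is concave in $r$, and since each summand is concave while their sum is affine, every summand with nonzero initial contribution must itself be affine; hence $G(h^{-1}(r))$ is linear precisely when each such $G_\Omega(h^{-1}(r);\ldots,h_i)$ is linear. Applying the one-dimensional characterization of \cite{GMY-boundary3} (whose hypotheses at the points of $\tilde Z_1$ are exactly the conditions $(A)$/$(B)$ imposed here) to each such summand yields, for each $i$, one-dimensional analogues of $(1)$--$(4)$: the structure $h_i=a_{j,i}w_j^{k_j}dw_j+(\text{ideal})$ near $z_j$, the identity $\varphi_\Omega+\psi=2\log|g|+2\log|F|$, the Green-function representation of $\psi$ on $\Omega_t$, and the matching of leading coefficients through a constant.

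Finally I would assemble these into $(1)$--$(4)$. Conditions $(2)$, $(3)$ and the structure of $g$ involve only $\psi,\varphi_\Omega,F$, so they are common to all $i$ and pass verbatim to the theorem. For $(1)$ and $(4)$ the point is a cross-fiber rigidity: the one-dimensional matching constant ties the coefficients $a_{j,i}$ at different $j$ to the same data $g$, which forces the $Y$-profiles $\sum_i a_{j,i}f_{Y,i}$ at distinct $j$ to be proportional to a single normalized form $f_Y$ with $\int_Y|f_Y|^2e^{-\varphi_Y}\in(0,+\infty)$, producing the common $f_Y$ in $(1)$ and the single constant $c_0$ in $(4)$. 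The converse direction reverses this reasoning: given $(1)$--$(4)$, the fixed profile $f_Y$ makes each $h_i$ a multiple of one common $\Omega$-form satisfying the one-dimensional linearity criterion, so every summand $G_\Omega(h^{-1}(r);\ldots,h_i)$ is linear and therefore so is their sum $G(h^{-1}(r))$.
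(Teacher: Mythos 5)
Your route is genuinely different from the paper's: you decompose along an orthonormal basis of the weighted Bergman space of the fibre $Y$ and try to reduce everything to the one-dimensional boundary-point characterization of \cite{GMY-boundary3}, whereas the paper keeps the fibration intact, shows that the modules $H_p$ are trivial (equal to $I(\varphi+\Psi)_p$) at every $p\in Z_0\setminus Z_3$ (Lemmas \ref{lm:trivial module 1} and \ref{lm:trivial module 2}), converts the boundary-point problem into an inner-point problem on $\Omega_t\times Y$ via Lemma \ref{l:inner}, and then invokes the inner-point fibration characterization (Proposition \ref{p-finite-2}, built on Theorem \ref{finite-p}). Your concavity-of-summands argument and the cross-fiber rigidity at the assembly stage are sound in outline, and if the reduction worked it would also reprove the inner-point fibration result rather than quoting it.

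However, the product formula $G(t)=\sum_i G_\Omega(t;c,\tilde\Psi,\varphi_\Omega,I(\varphi_\Omega+\tilde\Psi),h_i)$, which you correctly flag as the main obstacle, has a genuine gap, and it is exactly where the paper's machinery does real work. First, the admissibility constraint is imposed at each $p=(z_j,y)$ and is local in $y$: it asks for finiteness of $\int_{\{\Psi<-t\}\cap V}|\tilde f-f|^2e^{-\varphi-\Psi}$ on some neighborhood $V$ of $(z_j,y)$, while your orthonormal expansion and the Parseval identity are global in $Y$. When $Y$ is non-compact, local finiteness at every $(z_j,y)$ does not yield finiteness on $V'\times Y$ (the neighborhoods shrink with $y$), so the constraint does not ``Fubini-factor'' in the direction you need; conversely, componentwise membership $(\tilde\alpha_i-h_i)_{z_j}\in I(\varphi_\Omega+\tilde\Psi)_{z_j}$ only gives a neighborhood depending on $i$ with no summability control over $i$, so Lemma \ref{closedness} alone does not reassemble the sum. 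Second, the datum $f$ carries no integrability hypothesis, so its expansion $f=\sum_i\pi_1^*(h_i)\wedge\pi_2^*(f_{Y,i})$ is not defined until you first replace $f$ by an $L^2$ competitor (the paper does this explicitly with $\tilde f_{t_1}$ via Lemma \ref{characterization of g(t)=0}). Third, the tensor-product structure of weighted Bergman spaces (Lemma \ref{basis of product}) is established only for Euclidean factors and for weights satisfying a local integrability condition that the factor weight $e^{-\varphi_\Omega}c(-\tilde\Psi)$ need not satisfy. The paper sidesteps all three issues by using the local decomposition along powers of $w_j$ on the $\Omega$-factor (Lemmas \ref{decomp} and \ref{decomp integrable}) combined with the one-variable multiplier computation (Lemma \ref{l:1d-MIS}); to keep your global-$Y$ decomposition you would have to supply substitutes for these steps before the rest of your argument can run.
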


When $F\equiv 1$, $\psi(z)=-\infty$ for any $z\in \tilde Z_0=\tilde Z_3$ and condition $(B)$ holds, Theorem \ref{thm:fibra-finite} can be referred to \cite{BGY-concavity5} (see also Theorem \ref{finite-p} and Remark \ref{r:equivalent}).

When $\tilde Z_3$ is an infinite analytic subset of $\Omega$, Proposition \ref{p:n-linearity1} gives a necessary condition of $G(h^{-1}(r))$ is linear.

\section{Preparations}
\subsection{Properties of products of Bergman spaces}

In this section, we recall some results of products of Bergman spaces.

Let $U\subset \mathbb{C}^n$ and $W\subset \mathbb{C}^m$ be two open sets. Let $\varphi_1$ and  $\varphi_2$ be two  Lebesgue measurable functions on $U$ and $W$ respectively. Let $d\lambda_U$ and $d\lambda_W$ be the Lebesgue measures on $U$ and $W$ respectively. Denote $$A^2(U;e^{-\varphi_1}):=\{f\in\mathcal{O}_U:\int_U|f|^2e^{-\varphi_1}d\lambda_U<+\infty\},$$
$$A^2(W;e^{-\varphi_2}):=\{g\in\mathcal{O}_W:\int_W|g|^2e^{-\varphi_2}d\lambda_W<+\infty\}.$$
Denote $||f||_{1}:=(\int_U|f|^2e^{-\varphi_1}d\lambda_U)^{\frac{1}{2}}$ for any $f\in A^2(U;e^{-\varphi_1})$ and $||g||_{2}:=(\int_W|g|^2e^{-\varphi_2}d\lambda_W)^{\frac{1}{2}}$ for any $g\in A^2(W;e^{-\varphi_2})$ .
Let $M:=U\times W$ and $\varphi:=\varphi_1+\varphi_2$. Denote
$$A^2(M;e^{-\varphi}):=\{h\in\mathcal{O}_M:\int_M|h|^2e^{-\varphi}d\lambda_M<+\infty\}$$
and $||h||:=(\int_M|h|^2e^{-\varphi}d\lambda_M)^{\frac{1}{2}}$ for any $h\in A^2(M;e^{-\varphi})$.

We firstly recall the following lemma.
\begin{Lemma}[see \cite{Winiarski}]\label{module bounded by norm}
Let $\mu$ be a real positive Lebesgue measurable function on open subset $D_1\subset \mathbb{C}^n$. Assume that there exists a number $a>0$ such that the function $\mu^{-a}$ is integrable on some open set $D\subset D_1$ with respect to Lebesgue measure $d\lambda$. Let $f$ be a holomorphic function on $D$. Assume that $||f||_\mu<+\infty$, where  $||f||_\mu:=(\int_D|f|^2\mu d\lambda)^{\frac{1}{2}}$. Then for any compact set $K\subset D$, there exists a constant $C_K>0$ such that
$$\sup_K|f(z)|\le C_K||f||_\mu.$$

\end{Lemma}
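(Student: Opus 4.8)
The plan is to combine the sub-mean value property of plurisubharmonic functions with Hölder's inequality, exploiting the two integrability hypotheses $\int_D|f|^2\mu\,d\lambda<+\infty$ and $\int_D\mu^{-a}\,d\lambda<+\infty$ simultaneously. The starting observation is that for holomorphic $f$ and any exponent $q>0$ the function $|f|^{2q}$ is plurisubharmonic, since $\log|f|$ is plurisubharmonic and $t\mapsto e^{2qt}$ is convex and increasing; in particular it is subharmonic on $D\subset\mathbb{C}^n\cong\mathbb{R}^{2n}$. Thus for any ball $B(z_0,r)\subset D$ the sub-mean value inequality gives
$$|f(z_0)|^{2q}\le\frac{1}{\lambda(B(z_0,r))}\int_{B(z_0,r)}|f|^{2q}\,d\lambda.$$

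The key is to choose $q$ so that the right-hand integral splits cleanly against the two hypotheses. Writing $|f|^{2q}=(|f|^2\mu)^{q}\,\mu^{-q}$ and applying Hölder's inequality with conjugate exponents $p$ and $p'$, I want $qp=1$ (to recover $\int|f|^2\mu$) and $qp'=a$ (to recover $\int\mu^{-a}$); imposing $\tfrac1p+\tfrac1{p'}=1$ then forces $q=\tfrac{a}{1+a}\in(0,1)$, $p=\tfrac{1+a}{a}$ and $p'=1+a$. With this choice Hölder yields
$$\int_{B(z_0,r)}|f|^{2q}\,d\lambda\le\Big(\int_{B(z_0,r)}|f|^2\mu\,d\lambda\Big)^{q}\Big(\int_{B(z_0,r)}\mu^{-a}\,d\lambda\Big)^{\frac{1}{1+a}}\le\|f\|_\mu^{2q}\Big(\int_D\mu^{-a}\,d\lambda\Big)^{\frac{1}{1+a}},$$
where in the last step I enlarge the domain of integration from $B(z_0,r)$ to $D$. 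Both factors are finite by hypothesis and, crucially, independent of the point $z_0$.

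Combining the two displays and taking $(2q)$-th roots gives a pointwise bound $|f(z_0)|\le C(r,a,\mu)\,\|f\|_\mu$ with $C(r,a,\mu)=\lambda(B(z_0,r))^{-1/(2q)}\big(\int_D\mu^{-a}\,d\lambda\big)^{1/(2q(1+a))}$. To finish for a compact set $K\subset D$, I would fix the uniform radius $r_0:=\tfrac12\,\mathrm{dist}(K,\partial D)>0$ (taking any fixed $r_0>0$ if $D=\mathbb{C}^n$), so that $B(z_0,r_0)\subset D$ for every $z_0\in K$ while $\lambda(B(z_0,r_0))=\lambda(B(0,r_0))$ is a fixed positive number; the constant $C_K:=\lambda(B(0,r_0))^{-1/(2q)}\big(\int_D\mu^{-a}\,d\lambda\big)^{1/(2q(1+a))}$ then works uniformly, giving $\sup_K|f|\le C_K\|f\|_\mu$.

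There is no serious obstacle here; the only points demanding care are verifying that the prescribed $q=\tfrac{a}{1+a}$ indeed lies in $(0,1)$ with genuinely conjugate Hölder exponents, and selecting a single ball radius valid for all points of the compact set $K$ so that the resulting constant is independent of both the point $z_0$ and of $f$. The argument uses only that $f$ is holomorphic, entering solely through the subharmonicity of $|f|^{2q}$, together with the two integrability assumptions, exactly in the spirit of Winiarski's original estimate.
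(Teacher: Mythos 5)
Your argument is correct and is essentially the paper's own proof: the same sub-mean value inequality for the plurisubharmonic function $|f|^{2a/(1+a)}$ combined with H\"older's inequality for the conjugate exponents $\frac{1+a}{a}$ and $1+a$, yielding the identical constant. If anything, your version is slightly more careful than the paper's on the uniformity of the constant, since you fix a single radius $r_0$ valid for all points of $K$ and enlarge the H\"older factor $\int_{B(z_0,r_0)}\mu^{-a}\,d\lambda$ to $\int_{D}\mu^{-a}\,d\lambda$, making $C_K$ manifestly independent of $z_0$.
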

\begin{proof}The following proof can be referred to \cite{Winiarski}. Let $K$ be a compact subset of $D$. There exists a real number $r>0$ (depends on $K$) such that $B(z,r)\subset D$ for any $z\in K$. Let $p:=\frac{1+a}{a}$ and $q=1+a$. Note that $|f|^{\frac{2}{p}}$ is plurisubharmonic function on $D$. It follows from sub-mean value inequality that for any $z\in K$, we have
$$|f(z)|^{\frac{2}{p}}\le \frac{1}{\text{Vol}(B(z,r))}\int_{B(z,r)}|f|^{\frac{2}{p}}d\lambda.$$
Then it follows from H\"{o}lder inequality that
\begin{equation}
\begin{split}
   \text{Vol}(B(z,r))|f(z)|^{\frac{2}{p}}& \le \int_{B(z,r)}|f|^{\frac{2}{p}}\mu^{\frac{1}{p}}\mu^{-\frac{1}{p}}d\lambda \\
     & \le \bigg( \int_{B(z,r)}|f|^{2}\mu d\lambda \bigg)^{\frac{1}{p}}
     \bigg( \int_{B(z,r)}\mu^{-\frac{q}{p}}d\lambda \bigg)^{\frac{1}{q}}\\
     &\le||f||_\mu^{\frac{2}{p}}\bigg( \int_{B(z,r)}\mu^{-a}d\lambda \bigg)^{\frac{1}{q}}.
\end{split}
\end{equation}
Hence
$$|f(z)|\le \bigg(\text{Vol}(B(z,r))\bigg)^{\frac{-p}{2}}\bigg( \int_{B(z,r)}\mu^{-a}d\lambda \bigg)^{\frac{p}{2q}}||f||_\mu.$$
Denote $C_K:=\bigg(\text{Vol}(B(z,r))\bigg)^{\frac{-p}{2}}\bigg(  \int_{B(z,r)}\mu^{-a}d\lambda \bigg)^{\frac{p}{2q}}$ and for any $z\in K$, we have
$$|f(z)|\le C_K||f||_\mu.$$

Lemma \ref{module bounded by norm} has been proved.
\end{proof}

\begin{Remark}\label{derivatives bounded by norm} Let $\alpha\in \mathbb{Z}^{n}_{\ge 0}$ be a multi-index. Let $f$ be a holomorphic function on $D$. For any compact subset $K\subset D$, it follows from Lemma \ref{module bounded by norm} and Cauchy integral formula that there exists a constant $C_{K,\alpha}>0$ such that  we have
$$\sup_K|\partial^{\alpha}f(z)|\le C_{K,\alpha}||f||_{\mu}.$$
\end{Remark}

In the following discussion, we assume that for any relatively compact set $U_1\Subset U$ ($W_2\Subset W$), there exists a real number $a_1>0$ ($a_2>0$) such that $e^{a_1\varphi_1}$ ($e^{a_2\varphi_2}$) is integrable on $U_1$ ($W_2$). Then we have the following proposition.

\begin{Proposition} $A^2(U;e^{-\varphi_1})$,  $A^2(W;e^{-\varphi_2})$ and  $A^2(M;e^{-\varphi})$ are separable Hilbert spaces.
\end{Proposition}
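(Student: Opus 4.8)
The plan is to verify the three defining properties in turn---inner-product structure, completeness, and separability---treating the factors $U$, $W$ and the product $M$ essentially in parallel. Each of the three spaces is by definition the set of holomorphic functions lying in a weighted Lebesgue space, e.g. $A^2(U;e^{-\varphi_1})\subset L^2(U;e^{-\varphi_1}d\lambda_U)$, and carries the restriction of the natural inner product $\langle f,g\rangle_1:=\int_U f\bar{g}\,e^{-\varphi_1}d\lambda_U$. This is manifestly an inner product inducing $\|\cdot\|_1$, so each space is a pre-Hilbert space and the only substantive points are completeness and separability.

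For completeness I would take a Cauchy sequence $\{f_k\}$ in $A^2(U;e^{-\varphi_1})$. Since the weighted Lebesgue space $L^2(U;e^{-\varphi_1}d\lambda_U)$ is complete, $\{f_k\}$ has an $L^2$-limit $f_0$, and it remains only to show that $f_0$ agrees almost everywhere with a holomorphic function. This is exactly where Lemma \ref{module bounded by norm} enters, with $\mu:=e^{-\varphi_1}$: the hypothesis that $\mu^{-a}=e^{a\varphi_1}$ be integrable on a relatively compact open set is precisely the standing assumption imposed just before the Proposition. Thus for every compact $K\subset U$ there is a constant $C_K$ with $\sup_K|f_j-f_k|\le C_K\|f_j-f_k\|_1$, so $\{f_k\}$ is uniformly Cauchy on compact subsets of $U$. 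Its locally uniform limit is holomorphic by Weierstrass's theorem, and, being also the $L^2$-limit, coincides a.e.\ with $f_0$; hence $f_0\in A^2(U;e^{-\varphi_1})$ and the space is complete.

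Separability I would obtain by the isometry $f\mapsto f\,e^{-\varphi_1/2}$, which embeds $A^2(U;e^{-\varphi_1})$ as a linear subspace of the standard space $L^2(U;d\lambda_U)$; the latter is separable, and every subset of a separable metric space is separable, so $A^2(U;e^{-\varphi_1})$ is separable. The same two arguments apply verbatim to $A^2(W;e^{-\varphi_2})$. Finally, for $M=U\times W$ with $\varphi=\varphi_1+\varphi_2$, the one thing to check is that the integrability hypothesis of Lemma \ref{module bounded by norm} survives on $M$. Any relatively compact subset of $M$ sits inside some $U_1\times W_2$ with $U_1\Subset U$, $W_2\Subset W$; choosing $a=\min\{a_1,a_2\}$ from the standing assumption and noting that $e^{a\varphi_i}\le 1+e^{a_i\varphi_i}$ is integrable on the finite-measure set, Tonelli's theorem gives $\int_{U_1\times W_2}e^{a\varphi}=\big(\int_{U_1}e^{a\varphi_1}\big)\big(\int_{W_2}e^{a\varphi_2}\big)<+\infty$. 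With this, the completeness and separability arguments above carry over unchanged to $A^2(M;e^{-\varphi})$. The only genuinely nontrivial step is the completeness argument, and within it the passage from $L^2$-convergence to local uniform convergence; Lemma \ref{module bounded by norm} is exactly the tool that makes this passage work, so I expect no serious obstacle beyond assembling these standard ingredients.
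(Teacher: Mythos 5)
Your proof is correct and follows essentially the same route as the paper: the key step in both is Lemma \ref{module bounded by norm}, which upgrades norm-Cauchy to locally uniformly Cauchy so that the limit is holomorphic, and separability is obtained by realizing $A^2$ inside a separable $L^2$ space. The only cosmetic differences are that the paper identifies the limit and verifies norm convergence via Fatou's lemma rather than invoking completeness of the ambient weighted $L^2$ space, and views $A^2(U;e^{-\varphi_1})$ as a closed subspace of $L^2(U;e^{-\varphi_1})$ instead of using the isometry $f\mapsto fe^{-\varphi_1/2}$; your explicit check of the integrability hypothesis on the product $U_1\times W_2$ is a worthwhile detail the paper leaves implicit.
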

\begin{proof}
We prove that $A^2(U;e^{-\varphi_1})$ is separable Hilbert space. The same proof also holds for $A^2(W;e^{-\varphi_2})$ and  $A^2(M;e^{-\varphi})$.

It is clear that we only need to prove that $A^2(U;e^{-\varphi_1})$ is complete and separable.
 Let $U_1\Subset U$ be a relatively compact subset of $U$, then there exists $a_1>0$ such that $e^{a_1\varphi_1}$ is integrable on $U_1$. It follows from Lemma \ref{module bounded by norm} that we
\begin{equation}\label{hilbert space 1}\sup_{U_1}|f(z)|\le C_{U_1}||f||_1.
\end{equation}
Let $\{f_n\}$ be a Cauchy sequence in $A^2(U;e^{-\varphi_1})$. It follows from inequality \eqref{hilbert space 1} that $\{f_n\}$ is compactly convergent to a holomorphic function $F$ on $U$. Then
\begin{equation}\label{hilbert space 2}
\begin{split}
\int_U |f_n-F|^2e^{-\varphi_1}d\lambda_U&=\int_U \liminf_{k\to+\infty}|f_n-f_k|^2e^{-\varphi_1}d\lambda_U\\
&\le\liminf_{k\to+\infty}\int_U |f_n-f_k|^2e^{-\varphi_1}d\lambda_U\\
&=\liminf_{k\to+\infty}||f_n-f_k||_1.
\end{split}
\end{equation}
Hence when $n$ is large enough, we have $||F||_1\le ||f_n-F||_1+||f_n||\le \epsilon+||f_n||<+\infty$, i.e. $F\in A^2(U;e^{-\varphi_1})$.
It follows from $\{f_n\}$ is a Cauchy sequence in $A^2(U;e^{-\varphi_1})$ and \eqref{hilbert space 2} that we know that when $n$ is large enough, we have $\liminf_{k\to+\infty}||f_n-f_k||_1<\epsilon$, where $\epsilon>0$ is a small constant. Hence we have $\lim_{n\to+\infty}||f_n-F||_1=0$.  Hence the norm $||\cdot||_1$ is complete and $A^2(U;e^{-\varphi_1})$ is a Hilbert space.

Let $L^2(U;e^{-\varphi_1})$ be the space of the Lebesgue measurable function, which is square integrable with the weight $e^{-\varphi_1}$ on $U$. Note that $L^2(U;e^{-\varphi_1})$ is a separable Hilbert space (see \cite{Winiarski}). $A^2(U;e^{-\varphi_1})$ is a closed subspace of $L^2(U;e^{-\varphi_1})$, hence $A^2(U;e^{-\varphi_1})$ is  a separable Hilbert space.
\end{proof}

Next, we recall some results about products of Bergman spaces.

\begin{Lemma}\label{slice is integrable}Let $h\in A^2(M,e^{-\varphi})$. Let $\alpha\in \mathbb{Z}^{n}_{\ge 0}$ be a multi-index (let $\beta\in \mathbb{Z}^{m}_{\ge 0}$ be a multi-index). For any $z_0\in U$ ($w_0\in W$), we have $\partial^{\alpha}_z h(z_0,w)\in A^2(W,e^{-\varphi_2})$ ($\partial^{\beta}_w h(z,w_0)\in A^2(U,e^{-\varphi_1})$).
\end{Lemma}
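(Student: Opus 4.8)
The plan is to reduce both assertions, by the symmetry of the product structure $M=U\times W$, to the first one: for a fixed $z_0\in U$, to show that $w\mapsto \partial_z^\alpha h(z_0,w)$ lies in $A^2(W,e^{-\varphi_2})$. Since $h$ is holomorphic on $M$, so is $\partial_z^\alpha h$, and hence its restriction to the slice $\{z_0\}\times W$ is holomorphic on $W$; thus it suffices to prove the integrability $\int_W|\partial_z^\alpha h(z_0,w)|^2e^{-\varphi_2(w)}d\lambda_W<+\infty$.

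First I would fix a relatively compact open neighborhood $D\Subset U$ of $z_0$. By the standing assumption there is a number $a_1>0$ such that $e^{a_1\varphi_1}$ is integrable on $D$, i.e.\ $\mu^{-a_1}$ is integrable for $\mu:=e^{-\varphi_1}$. For each fixed $w\in W$ the slice $z\mapsto h(z,w)$ is holomorphic on $D$, so Remark \ref{derivatives bounded by norm} (applied with $K=\{z_0\}$) gives
$$|\partial_z^\alpha h(z_0,w)|\le C\left(\int_D|h(z,w)|^2e^{-\varphi_1(z)}\,d\lambda_U(z)\right)^{\frac{1}{2}}.$$
The crucial point — and really the only place any care is needed — is that the constant $C=C_{\{z_0\},\alpha}$ produced by Lemma \ref{module bounded by norm} together with the Cauchy integral formula depends only on $z_0$, on $\alpha$, on the radius used inside $D$, and on $\int_D e^{a_1\varphi_1}\,d\lambda_U$, and is therefore \emph{independent of} $w$.

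Then I would square this estimate, multiply by $e^{-\varphi_2(w)}$, integrate over $w\in W$, and apply Tonelli's theorem (all integrands being nonnegative) to obtain
$$\int_W|\partial_z^\alpha h(z_0,w)|^2e^{-\varphi_2(w)}\,d\lambda_W\le C^2\int_{D\times W}|h|^2e^{-\varphi}\,d\lambda_M\le C^2\int_M|h|^2e^{-\varphi}\,d\lambda_M<+\infty,$$
using $\varphi=\varphi_1+\varphi_2$, $D\times W\subset M$, and $h\in A^2(M,e^{-\varphi})$. This yields the first assertion, and the parenthetical statement about $\partial_w^\beta h(z,w_0)$ follows verbatim after interchanging the roles of $(U,\varphi_1)$ and $(W,\varphi_2)$.

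The argument is essentially routine once the uniformity of $C$ in $w$ is observed, so I do not expect a genuine obstacle. The only subtlety is the bookkeeping around this uniformity: one must read off from the explicit expression for the constant in the proof of Lemma \ref{module bounded by norm} that it does not deteriorate as $w$ varies, which is exactly what licenses the passage through Tonelli above. (If for some $w$ the slice norm $\int_D|h(z,w)|^2e^{-\varphi_1}\,d\lambda_U$ happens to be infinite, the pointwise estimate still holds trivially with an infinite right-hand side, so no restriction to a full-measure set of $w$ is actually required before integrating.)
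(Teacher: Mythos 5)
Your proposal is correct and follows essentially the same route as the paper: a pointwise bound $|\partial_z^\alpha h(z_0,w)|\le C\,\|h(\cdot,w)\|_1$ with $C$ independent of $w$ (from Lemma \ref{module bounded by norm} and the Cauchy integral formula, i.e.\ Remark \ref{derivatives bounded by norm}), followed by squaring, integrating in $w$, and Fubini/Tonelli to bound everything by $C^2\|h\|^2<+\infty$. Your explicit restriction to a relatively compact neighborhood $D\Subset U$ and your remark on the uniformity of $C$ in $w$ are just slightly more careful versions of what the paper does implicitly.
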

\begin{proof}

 It follows from Remark \ref{derivatives bounded by norm} that there exists a constant $C_0>0$ ($C_0$ is independent of $w$) such that for any $w\in W$,
$$|\partial^{\alpha}_z h(z_0,w)|\le C_{0}||h(z,w)||_{1}.$$
Then it follows from $h\in A^2(M,e^{-\varphi})$ and Fubini's theorem that we have
\begin{equation}\nonumber
\begin{split}
   |\partial^{\alpha}_z h(z_0,w)||_2^2: & =\int_{W}|\partial^{\alpha}_z h(z_0,w)|^2e^{-\varphi_2}d\lambda_W \\
     &\le C_0 \int_{W}||h(z,w)||_{1}^2e^{-\varphi_2}d\lambda_W\\
     &=C_0\int_{W}\int_U |h(z,w)|^2e^{-\varphi_1-\varphi_2}d\lambda_M\\
     &=C_0||h||^2<+\infty.
\end{split}
\end{equation}
Hence we have $\partial^{\alpha}_z h(z_0,w)\in A^2(W,e^{-\varphi_2})$.

 The same proof as above shows that $\partial^{\alpha}_w h(z,w_0)\in A^2(U,e^{-\varphi_1})$.

Lemma \ref{slice is integrable} has been proved.
\end{proof}

The following lemma will be used in the proof of Lemma \ref{basis of product}.

\begin{Lemma}\label{holo of H}
Let $h\in A^2(M,e^{-\varphi})$ and $T\in A^2(U,e^{-\varphi_1})$. For any $w\in W$, denote
\begin{equation}\label{definition of H(w)}
  H(w):=\int_U h(z,w)\overline{T(z)}e^{-\varphi_1(z)}d\lambda_U.
\end{equation}
Then $H(w)$ is a holomorphic function on $W$ and $H(w)\in A^2(W,e^{-\varphi_2})$.
\end{Lemma}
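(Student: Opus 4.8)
The plan is to first show that the defining integral converges and yields an $L^2$ function, and then upgrade this to holomorphicity by differentiating under the integral sign. For the convergence, I would apply the Cauchy--Schwarz inequality in the $z$-variable to get, for each fixed $w\in W$,
\[
|H(w)|\le\Big(\int_U|h(z,w)|^2e^{-\varphi_1}d\lambda_U\Big)^{1/2}\Big(\int_U|T(z)|^2e^{-\varphi_1}d\lambda_U\Big)^{1/2}=\|h(\cdot,w)\|_1\,\|T\|_1,
\]
so $H$ is well defined and finite whenever $h(\cdot,w)\in A^2(U;e^{-\varphi_1})$. Squaring, multiplying by $e^{-\varphi_2}$, integrating over $W$, and applying Fubini's theorem exactly as in the proof of Lemma \ref{slice is integrable} gives $\int_W|H(w)|^2e^{-\varphi_2}d\lambda_W\le\|T\|_1^2\int_W\|h(\cdot,w)\|_1^2e^{-\varphi_2}d\lambda_W=\|T\|_1^2\,\|h\|^2<+\infty$. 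Thus $H\in L^2(W;e^{-\varphi_2})$ with $\|H\|_2\le\|T\|_1\|h\|$, and once holomorphicity is established it will follow that $H\in A^2(W;e^{-\varphi_2})$.

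For holomorphicity I would fix $w_0\in W$ and a polydisc $\Delta$ with $\overline{\Delta}\subset W$, and differentiate under the integral sign to show that $H$ is $C^1$ on $\Delta$ with $\partial_{\bar w_j}H(w)=\int_U\partial_{\bar w_j}h(z,w)\overline{T(z)}e^{-\varphi_1}d\lambda_U=0$ for each $j$, using that $w\mapsto h(z,w)$ is holomorphic for every fixed $z$. The interchange of differentiation and integration is the crux, and I would justify it by dominated convergence. The essential observation is that Remark \ref{derivatives bounded by norm}, applied in the $W$-variable to the slice $h(z,\cdot)\in A^2(W;e^{-\varphi_2})$ (legitimate by Lemma \ref{slice is integrable}), produces a constant $C_{\overline{\Delta},j}$ \emph{independent of $z$} such that $\sup_{w\in\overline{\Delta}}|\partial_{w_j}h(z,w)|\le C_{\overline{\Delta},j}\|h(z,\cdot)\|_2$; this is precisely where the weight $e^{-\varphi_2}$ must be carried along rather than discarded. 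Hence the difference quotients are dominated by $C\,\|h(z,\cdot)\|_2\,|T(z)|e^{-\varphi_1}$, which is integrable in $z$ because by Cauchy--Schwarz and Fubini $\int_U\|h(z,\cdot)\|_2\,|T(z)|e^{-\varphi_1}d\lambda_U\le\|h\|\,\|T\|_1<+\infty$.

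With differentiation under the integral justified, $H$ is continuous and satisfies the Cauchy--Riemann equations on every such polydisc, hence is holomorphic on $W$; equivalently one may verify continuity by the same dominated-convergence estimate and then invoke Morera's theorem in each variable $w_j$ together with Osgood's lemma, the contour--$U$ interchange being licensed by the same $z$-uniform sup bound.

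I expect the only genuine obstacle to be this interchange. The naive attempt to bound $\int_{\overline{\Delta}}\|h(\cdot,w)\|_1^2\,d\lambda_W$ by the finite quantity $\int_{\overline{\Delta}}\|h(\cdot,w)\|_1^2e^{-\varphi_2}d\lambda_W$ fails when $\varphi_2$ is unbounded above, so it is important to route the domination through Lemma \ref{module bounded by norm} and Remark \ref{derivatives bounded by norm} in the $W$-variable, which respect the weight, rather than through any comparison of weighted and unweighted norms. As an alternative clean route, one may view $z\mapsto h(z,\cdot)$ as an $A^2(W;e^{-\varphi_2})$-valued Bochner-integrable map (integrability again following from $\int_U\|h(z,\cdot)\|_2\,|T(z)|e^{-\varphi_1}d\lambda_U\le\|h\|\,\|T\|_1$) and integrate it against $\overline{T(z)}e^{-\varphi_1}d\lambda_U$; since $A^2(W;e^{-\varphi_2})$ is a closed (separable Hilbert) subspace, the integral lies in $A^2(W;e^{-\varphi_2})$, and applying the bounded evaluation functionals $g\mapsto g(w)$ identifies this integral pointwise with $H$.
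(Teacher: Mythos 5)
Your proposal is correct and follows essentially the same route as the paper: both establish finiteness and the $L^2$ bound by Cauchy--Schwarz and Fubini, and both derive holomorphicity from the $z$-uniform sup bound $\sup_{w\in \overline{\Delta}}|\partial_{w_j}h(z,w)|\le C\|h(z,\cdot)\|_2$ obtained by applying Remark \ref{derivatives bounded by norm} in the $W$-variable to the slices furnished by Lemma \ref{slice is integrable}. The only cosmetic difference is that the paper verifies complex differentiability directly through a quantitative second-order Taylor estimate (bounding the remainder by $C|\tilde w-w|^4$), whereas you route the same uniform derivative bound through dominated convergence and the Cauchy--Riemann equations.
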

\begin{proof}When $w\in W$ is fixed, it follows from Lemma \ref{slice is integrable} that $h(z,w)$ belongs to $A^2(U,e^{-\varphi_1})$. It follows from Cauchy-Schwarz inequality and both $T$ and $h(z,w)$ belong to $ A^2(U,e^{-\varphi_1})$ that $$|H(w)|\le \bigg(\int_U |h(z,w)|^2e^{-\varphi_1(z)}d\lambda_U\bigg)^{\frac{1}{2}}\bigg(\int_U |T(z)|^2e^{-\varphi_1(z)}d\lambda_U\bigg)^{\frac{1}{2}}<+\infty,$$ for any $w\in W$.

Let $w=(w_1,w_2,\ldots,w_m)\in W$ be given. Let $W_0\Subset W$ be an open convex neighborhood of $w$ in $W$. Let $\tilde{w}=(\tilde{w}_1,\tilde{w}_2,\ldots,\tilde{w}_m)\in W_0$. It follows from Remark \ref{derivatives bounded by norm} that we have
\begin{equation}\nonumber
\begin{split}
   &|h(z,\tilde{w})-h(z,w)-\sum_{j=1}^{m}(w_j-\tilde{w}_j)\partial_{w_j}h(z,w)|^2 \\
    =& |\int_{0}^{1}\frac{d h(z,w+t(\tilde{w}-w))}{dt}dt-\sum_{j=1}^{m}(w_j-\tilde{w}_j)\partial_{w_j}h(z,w)|^2\\
     =& |\sum_{j=1}^{m}(w_j-\tilde{w}_j)\int_{0}^{1}\partial_{w_j}h(z,w+t(\tilde{w}-w))dt
     -\sum_{j=1}^{m}(w_j-\tilde{w}_j)\partial_{w_j}h(z,w)|^2\\
     =&|\sum_{j=1}^{m}\sum_{k=1}^{m}(w_j-\tilde{w}_j)(w_k-\tilde{w}_k)
     \int_{0}^{1}\int_{0}^{1}t_1\big(\partial_{w_j}\partial_{w_k}h(z,w+t_1t_2(\tilde{w}-w))\big)dt_2dt_1
     |^2  \\
     \le &|\tilde{w}-w|^4 \frac{1}{4} \sum_{j=1}^{m}\sum_{k=1}^{m}\sup_{\hat{w}\in W_0}|\partial_{w_j}\partial_{w_k}(h(z,\hat{w}))|^2\\
     \le & C_1|\tilde{w}-w|^4||h(z,\cdot)||_2^2,
\end{split}
\end{equation}
where $C_1>0$ is a constant independent of $z$.
Then we have
\begin{equation}\label{complex differentiable of H}
\begin{split}
&|H(\tilde{w})-H(w)-\sum_{j=1}^{m}(w_j-\tilde{w}_j)\int_U (\partial_{w_j}h(z,w))\overline{T(z)}e^{-\varphi_1(z)}d\lambda_U|^2\\
=&|\int_U \bigg(h(z,\tilde{w})-h(z,w)-\sum_{j=1}^{m}(w_j-\tilde{w}_j)\partial_{w_j}h(z,w)\bigg)\overline{T(z)}e^{-\varphi_1(z)}d\lambda_U|^2\\
\le &||T||_1^2
\int_U |h(z,\tilde{w})-h(z,w)-\sum_{j=1}^{m}(w_j-\tilde{w}_j)\partial_{w_j}h(z,w)|^2e^{-\varphi_1(z)}d\lambda_U\\
\le &||T||_1^2
|\tilde{w}-w|^4 C_1  \int_U ||h(z,\cdot)||_2^2e^{-\varphi_1(z)}d\lambda_U\\
=&||T||_1^2
|\tilde{w}-w|^4 C_1 \int_U\int_W |h(z,\hat{w})|^2 e^{-\varphi_2(\hat{w})} e^{-\varphi_1(z)}d\lambda_Wd\lambda_U\\
=&C_1||T||_1^2 |\tilde{w}-{w}|^4 ||h||^2
\end{split}
\end{equation}
It follows from inequality \eqref{complex differentiable of H} that $H(w)$ is a holomorphic function on $W$. Note that

\begin{equation}\label{norm of H 1}
\begin{split}
  ||H(w)||_2^2
  &\le \int_W|H(w)|^2e^{-\varphi_2}d\lambda_W\\
  &\le \bigg(\int_U |T(z)|^2e^{-\varphi_1(z)}d\lambda_U\bigg)\int_W \int_U |h(z,w)|^2e^{-\varphi_1(z)-\varphi_2(w)}d\lambda_Ud\lambda_W\\
  &=||T||_1^2\cdot ||h||^2<+\infty.
\end{split}
\end{equation}
Hence we have $H(w)\in A^2(W,e^{-\varphi_2})$.
\end{proof}

The following lemma implies that the product of bases of $A^2(U;e^{-\varphi_1})$ and $A^2(W;e^{-\varphi_2})$ make a basis of $A^2(M;e^{-\varphi})$.

\begin{Lemma}\label{basis of product}Let $\{f_i(z)\}_{i\in \mathbb{Z}_{\ge 0}}$ and $\{g_j(w)\}_{j\in \mathbb{Z}_{\ge 0}}$ be the complete orthonormal bases of $A^2(U,e^{-\varphi_1})$ and $A^2(W,e^{-\varphi_2})$ respectively. Then $\{f_i(z) g_j(w)\}_{i,j\in\mathbb{Z}_{\ge 0}}$ is a complete orthonormal basis of $A^2(M,e^{-\varphi})$.
\end{Lemma}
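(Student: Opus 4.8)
The plan is to prove the two defining properties of a complete orthonormal basis separately: orthonormality, which is a direct computation, and completeness, which is the substantive part and rests on the two slice lemmas recalled above.

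For orthonormality, I would use the identifications $M=U\times W$, $\varphi=\varphi_1+\varphi_2$, and $d\lambda_M=d\lambda_U\,d\lambda_W$. For index pairs $(i,j)$ and $(k,l)$, Cauchy--Schwarz together with the membership of the slices in the relevant $A^2$ spaces guarantees that $f_i(z)g_j(w)\overline{f_k(z)g_l(w)}e^{-\varphi}$ is integrable on $M$, so Fubini's theorem applies and the integral factors as
$$\int_M f_i g_j \overline{f_k g_l}\, e^{-\varphi}\,d\lambda_M = \Big(\int_U f_i \overline{f_k}\,e^{-\varphi_1}\,d\lambda_U\Big)\Big(\int_W g_j \overline{g_l}\,e^{-\varphi_2}\,d\lambda_W\Big)=\delta_{ik}\delta_{jl},$$
using the orthonormality of $\{f_i\}$ and $\{g_j\}$. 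Thus $\{f_i g_j\}$ is an orthonormal system in $A^2(M,e^{-\varphi})$.

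For completeness, I would take $h\in A^2(M,e^{-\varphi})$ orthogonal to every $f_i g_j$ and show $h\equiv 0$. For each fixed $i$, set $H_i(w):=\int_U h(z,w)\overline{f_i(z)}e^{-\varphi_1(z)}\,d\lambda_U$; by Lemma \ref{holo of H} (with $T=f_i$), $H_i$ is holomorphic on $W$ and lies in $A^2(W,e^{-\varphi_2})$. Applying Fubini to the relation $\langle h, f_i g_j\rangle=0$ rewrites it as $\int_W H_i(w)\overline{g_j(w)}e^{-\varphi_2(w)}\,d\lambda_W=0$, i.e. $H_i\perp g_j$ for all $j$. Since $\{g_j\}$ is complete in $A^2(W,e^{-\varphi_2})$, this forces $H_i=0$, and by holomorphy $H_i\equiv 0$ on $W$. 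Hence for every $i$ and every $w\in W$ the slice $h(\cdot,w)$---which belongs to $A^2(U,e^{-\varphi_1})$ by Lemma \ref{slice is integrable}---is orthogonal to $f_i$. Completeness of $\{f_i\}$ then gives $h(\cdot,w)=0$ for each $w$, and $h\equiv 0$ follows by continuity. This shows the orthonormal system is complete, hence a complete orthonormal basis.

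The step I expect to require the most care is the passage from the vanishing of the double integral to the vanishing of each slice: one must interpose the auxiliary functions $H_i$ and invoke the completeness of $\{g_j\}$ in the $w$-variable before one can even talk about slices in the $z$-variable. The crucial technical inputs are precisely Lemma \ref{holo of H}, which guarantees that $H_i$ is a genuine element of $A^2(W,e^{-\varphi_2})$ (so that completeness of $\{g_j\}$ is applicable), and Lemma \ref{slice is integrable}, which guarantees that the slices lie in the correct Bergman spaces; the repeated applications of Fubini's theorem must each be justified by Cauchy--Schwarz together with the finiteness $\|h\|<+\infty$.
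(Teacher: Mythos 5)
Your proposal is correct and follows essentially the same route as the paper's proof: orthonormality via Fubini, and completeness by introducing the auxiliary functions $H_i(w)=\int_U h(z,w)\overline{f_i(z)}e^{-\varphi_1}d\lambda_U$, invoking Lemma \ref{holo of H} to place them in $A^2(W,e^{-\varphi_2})$, using completeness of $\{g_j\}$ to kill each $H_i$, and then Lemma \ref{slice is integrable} together with completeness of $\{f_i\}$ to kill each slice $h(\cdot,w)$. The only difference is that you spell out the Cauchy--Schwarz justification for the applications of Fubini, which the paper leaves implicit.
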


\begin{proof}It follows from Fubini's theorem that $\{f_i(z) g_j(w)\}_{i,j\in\mathbb{Z}_{\ge 0}}$ is orthonormal basis of $A^2(M,e^{-\varphi})$. Now we prove $\{f_i(z) g_j(w)\}_{i,j\in\mathbb{Z}_{\ge 0}}$  is complete. Let $h(z,w)\in A^2(M,e^{-\varphi})$ such that $\int_M h(z,w)\overline{f_i(z)g_j(w)}e^{-\varphi}d\lambda_M=0$ for any $i,j\in \mathbb{Z}_{\ge 0}$.

For any $i_0\in \mathbb{Z}_{\ge 0}$, denote
$$H_{i_0}(w):=\int_{U}h(z,w)\overline{f_{i_0}(z)}e^{-\varphi_1}d\lambda_U,$$
where $w\in W$. It follows from Lemma \ref{holo of H} that $H_{i_0}(w)\in A^2(W,e^{-\varphi_2})$. It follows from Fubini's theorem that for any $j_0\in \mathbb{Z}_{\ge 0}$, we have
\begin{equation}
  \begin{split}
     0 &=\int_M h(z,w)\overline{f_{i_0}(z)g_{j_0}(w)}e^{-\varphi}d\lambda_M \\
       &= \int_W \big(\int_U h(z,w)\overline{f_{i_0}(z)}e^{-\varphi_1}d\lambda_U\big)\overline{g_{j_0}(w)}e^{-\varphi_2}d\lambda_W\\
       &=\int_W H_{i_0}(w)\overline{g_{j_0}(w)}e^{-\varphi_2}d\lambda_W.
  \end{split}
\end{equation}
As $j_0$ is arbitrarily chosen and $\{g_j(w)\}_{j\in \mathbb{Z}_{\ge 0}}$ is the complete orthonormal basis of $A^2(W,e^{-\varphi_2})$, we know that $H_{i_0}(w)= 0$ for any $i_0\in \mathbb{Z}_{\ge 0}$.

For any $w\in W$, it follows from $H_{i}(w)= 0$ for any $i\in \mathbb{Z}_{\ge 0}$ and $\{f_i(z)\}_{i\in \mathbb{Z}_{\ge 0}}$ is the complete orthonormal basis of $A^2(U,e^{-\varphi_1})$, we know that $h(z,w)= 0$, for any $z\in U$. Since $w$ is arbitrarily chosen, we know that $h(z,w)\equiv 0$ on $M$, for any $z\in U$. This shows that $\{f_i(z) g_j(w)\}_{i,j\in\mathbb{Z}_{\ge 0}}$ is complete.

Lemma \ref{basis of product} has been proved.
\end{proof}

Let $\Delta\subset\mathbb{C}$  be  the unit disk, and let $Y$ be an $(n-1)-$dimensional complex manifold, and let $N=\Delta \times Y$. Let $\pi_1$ and $\pi_2$ be the natural projections from $N$ to $\Delta$ and $Y$ respectively.  Let $\rho_1$ be a nonnegative Lebesgue measurable function on $\Delta$ satisfying that $\rho_1(w)=\rho_1(|w|)$ for any $w$ and the Lebesgue measure of $\{w\in\Delta: \rho_1(w)>0\}$ is  positive. Let $\rho_2$ be a nonnegative Lebesgue measurable function on $Y$, and denote that $\rho=\pi_1^*(\rho_1)\times\pi_2^*(\rho_2)$ on $N$. We recall the following lemma.

\begin{Lemma}[see \cite{BGY-concavity6}]
	\label{decomp}	For any holomorphic $(n,0)$  form $F$ on $N$, there exists a unique sequence of  holomorphic  $(n-1,0)$   forms $\{F_{j}\}_{j\in\mathbb{Z}_{\ge0}}$ on $Y$ such that
	\begin{equation*}
		F=\sum_{j\in\mathbb{Z}_{\ge0}}\pi_1^*(w^{j}dw_j) \wedge \pi_2^*(F_j),
	\end{equation*}
where the right term of the above equality is uniformly convergent on any compact subset of $N$. Moreover, if 	$\int_{N}|F|^2\rho<+\infty,$ we have
	\begin{equation*}
\int_{Y}|F_{j}|^2\rho_2<+\infty
\end{equation*}
for any $j\in\mathbb{Z}_{\ge0}$.
\end{Lemma}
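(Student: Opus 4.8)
The plan is to construct the decomposition \emph{invariantly}, so that the forms $F_j$ are manifestly globally defined on $Y$ and no patching argument is needed, and then to extract the $L^2$ estimate from the radial symmetry of $\rho_1$ via an orthogonality computation. The product structure of $N=\Delta\times Y$ provides a canonical holomorphic vector field $V$ on $N$, namely the lift of $\partial/\partial w$ that is zero in the $Y$-direction; this is globally defined precisely because $N$ is a product. Set $\beta:=\iota_V F$, a global holomorphic $(n-1,0)$-form on $N$. Since $\pi_1^*(dw)\wedge F=0$ for degree reasons and $\iota_V(\pi_1^*(dw))=1$, applying the interior product $\iota_V$ to $\pi_1^*(dw)\wedge F$ gives the identity $F=\pi_1^*(dw)\wedge\beta$. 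For each $w\in\Delta$ let $\beta_w$ be the restriction of $\beta$ to the slice $\{w\}\times Y\cong Y$, a holomorphic $(n-1,0)$-form on $Y$ depending holomorphically on $w$, and define
\[
F_j:=\frac{1}{2\pi\sqrt{-1}}\oint_{|w|=r}\frac{\beta_w}{w^{j+1}}\,dw,
\]
a contour integral of $Y$-forms, independent of $r\in(0,1)$ by Cauchy's theorem. Then each $F_j$ is a global holomorphic $(n-1,0)$-form on $Y$, and $\beta_w=\sum_{j\ge0}w^jF_j$ is the Taylor expansion of $\beta$ in $w$; substituting into $F=\pi_1^*(dw)\wedge\beta$ yields $F=\sum_{j\ge0}\pi_1^*(w^j\,dw)\wedge\pi_2^*(F_j)$. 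Uniform convergence on compact subsets of $N$ follows from Cauchy estimates: on $\{|w|\le r\}\times K$ with $K\Subset Y$ the chart coefficients of $\beta$ are bounded, so the power series in $w$ converges uniformly there.

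Uniqueness is immediate from this construction. If $F=\sum_j\pi_1^*(w^j\,dw)\wedge\pi_2^*(G_j)$ with the series converging uniformly on compacts, then applying $\iota_V$ gives $\beta=\sum_j w^j\pi_2^*(G_j)$; restricting to slices and comparing Taylor coefficients (equivalently, integrating against $w^{-j-1}$ over $\{|w|=r\}$) forces $G_j=F_j$ for every $j$.

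For the $L^2$ statement, suppose $\int_N|F|^2\rho<+\infty$. In a holomorphic chart $(y_1,\dots,y_{n-1})$ on $Y$ write $F_j=h_j\,dy_1\wedge\cdots\wedge dy_{n-1}$, so that $F=h\,dw\wedge dy_1\wedge\cdots\wedge dy_{n-1}$ with $h(w,y)=\sum_j w^jh_j(y)$, and $|F|^2$ equals $|h(w,y)|^2$ times a fixed positive multiple of Lebesgue measure. By Tonelli's theorem (the integrand is nonnegative) I would integrate first over $\Delta$ for fixed $y$: expanding $|h(w,y)|^2=\sum_{j,k}w^j\bar w^k h_j(y)\overline{h_k(y)}$ and using that $\rho_1(w)=\rho_1(|w|)$ is radial, the angular integral $\int_0^{2\pi}e^{\sqrt{-1}(j-k)\theta}\,d\theta$ annihilates all cross terms, leaving
\[
\int_\Delta|h(w,y)|^2\rho_1\,\big(\sqrt{-1}\,dw\wedge d\bar w\big)=\sum_{j\ge0}c_j\,|h_j(y)|^2,\qquad c_j=4\pi\int_0^1 s^{2j+1}\rho_1(s)\,ds,
\]
where the sum–integral interchange is justified by first integrating over $\{|w|\le r\}$ (uniform convergence) and then letting $r\to1$ by monotone convergence. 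The hypothesis that $\{\rho_1>0\}$ has positive Lebesgue measure, together with radial symmetry, forces $c_j>0$ for every $j$. Integrating over $Y$ and applying Tonelli once more yields
\[
\int_N|F|^2\rho=\sum_{j\ge0}c_j\int_Y|F_j|^2\rho_2,
\]
so that finiteness of the left-hand side and $c_j>0$ give $\int_Y|F_j|^2\rho_2<+\infty$ for each $j$, as claimed.

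The routine parts are the sign and normalization bookkeeping in identifying $|F|^2$ and $|F_j|^2$ with coordinate densities. The step demanding the most care is the orthogonality computation: the exact cancellation of the off-diagonal terms rests entirely on the radial symmetry of $\rho_1$, and the legitimacy of the sum–integral interchange must be secured by the monotone/Tonelli argument rather than by uniform convergence alone, since uniform convergence fails as $|w|\to1$.
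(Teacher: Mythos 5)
The paper does not actually prove Lemma \ref{decomp}; it is imported from \cite{BGY-concavity6} with no argument reproduced here, so there is no in-paper proof to compare against. Judged on its own, your proof is correct in structure and is essentially the standard argument for this lemma: expand the coefficient of $F$ as a power series in the fiber coordinate $w$, and extract the $L^2$ bound from the radial symmetry of $\rho_1$. Your invariant packaging via $\beta=\iota_V F$ and the identity $F=\pi_1^*(dw)\wedge\beta$ (which follows correctly from $\pi_1^*(dw)\wedge F=0$ and the antiderivation property) is a clean way to see that the $F_j$ are globally defined $(n-1,0)$ forms on $Y$ without any patching, and the contour-integral formula for $F_j$ delivers uniqueness and the locally uniform convergence exactly as expected.

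One step needs repair, and you flagged the right spot but gave an insufficient justification. You justify the sum--integral interchange by ``first integrating over $\{|w|\le r\}$ (uniform convergence)'', but uniform convergence only licenses term-by-term integration against a \emph{finite} measure, and $\rho_1$ is merely nonnegative measurable: in the applications in this very paper $\rho_1$ is of the form $e^{-\varphi_1}$ with $\varphi_1$ subharmonic with positive Lelong number at $0$ (e.g. $\rho_1=|w|^{-2k}$), so $\rho_1$ can fail to be integrable on every disk $\{|w|\le r\}$, and then the individual cross-term integrals $\int w^j\bar{w}^k\rho_1$ need not even be defined. The fix is short and standard: pass to polar coordinates and apply Parseval on each circle $\{|w|=s\}$, $s<1$, where the series $\theta\mapsto h(se^{\sqrt{-1}\theta},y)$ converges uniformly, giving $\int_0^{2\pi}|h(se^{\sqrt{-1}\theta},y)|^2d\theta=2\pi\sum_{j\ge0}s^{2j}|h_j(y)|^2$; then integrate in $s$ against $s\rho_1(s)ds$ using Tonelli for nonnegative terms. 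This yields your identity $\int_\Delta|h(\cdot,y)|^2\rho_1=\sum_j c_j|h_j(y)|^2$ as an identity in $[0,+\infty]$ with $c_j\in(0,+\infty]$ (positivity of $c_j$ from the positive measure of $\{\rho_1>0\}$ and radiality, as you note), and the conclusion $\int_Y|F_j|^2\rho_2<+\infty$ follows for every $j$; in the degenerate case $c_j=+\infty$ the hypothesis forces $\int_Y|F_j|^2\rho_2=0$, which is still consistent with the statement. With this adjustment your proof is complete.
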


In the following discussion, we assume that
 $U\Subset \mathbb{C}$ is an open subset containing origin $0$ in $\mathbb{C}$.

\begin{Lemma}\label{construction of basis in dim one}There exists a countable complete orthonormal basis $\{\tilde{f}_i(z)\}_{i\in \mathbb{Z}_{\ge 0}}$  of $A^2(U,e^{-\varphi_1})$ such that $k_i:=\text{ord}_0(\tilde{f}_i)$ is strictly increasing with respect to $i$.
\end{Lemma}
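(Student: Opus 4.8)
The plan is to filter $A^2(U,e^{-\varphi_1})$ by vanishing order at $0$ and read the basis off the successive one–dimensional quotients. For each $k\in\mathbb{Z}_{\ge0}$ I set
$$S_k:=\{f\in A^2(U,e^{-\varphi_1}):\text{ord}_0(f)\ge k\},$$
so that $A^2(U,e^{-\varphi_1})=S_0\supseteq S_1\supseteq S_2\supseteq\cdots$. First I would verify that each $S_k$ is a closed subspace: by Lemma \ref{module bounded by norm} and Remark \ref{derivatives bounded by norm}, convergence in the norm $\|\cdot\|_1$ forces locally uniform convergence of all complex derivatives, hence convergence of every Taylor coefficient $\frac{f^{(j)}(0)}{j!}$ at $0$, and $S_k$ is cut out by the closed conditions $f^{(j)}(0)=0$ for $0\le j<k$. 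Next, the map $c_k\colon S_k\to\mathbb{C}$, $c_k(f):=\frac{f^{(k)}(0)}{k!}$, is a linear functional with kernel exactly $S_{k+1}$, so $\dim(S_k/S_{k+1})\le1$, with equality precisely when some element of $A^2(U,e^{-\varphi_1})$ has order exactly $k$.

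Then I would extract the basis from the strict drops of this filtration. Let $\Lambda:=\{\text{ord}_0(f):f\in A^2(U,e^{-\varphi_1})\setminus\{0\}\}\subseteq\mathbb{Z}_{\ge0}$ and enumerate $\Lambda=\{k_0<k_1<k_2<\cdots\}$. Since none of the orders in $\{0,\dots,k_0-1\}$ and none strictly between consecutive $k_i$ is attained, the previous step gives $S_{k_0}=S_0$ and $S_{k_i+1}=S_{k_{i+1}}$; writing $H_i:=S_{k_i}$ we therefore obtain a chain $A^2(U,e^{-\varphi_1})=H_0\supsetneq H_1\supsetneq\cdots$ with $\dim(H_i/H_{i+1})=1$ for every $i$. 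I would then let $\tilde f_i$ be a unit vector spanning the one–dimensional orthogonal complement $H_i\ominus H_{i+1}$. For $j>i$ one has $\tilde f_j\in H_j\subseteq H_{i+1}$ while $\tilde f_i\perp H_{i+1}$, so $\{\tilde f_i\}$ is orthonormal; and $\tilde f_i\in H_i=S_{k_i}$ together with $\tilde f_i\notin H_{i+1}=S_{k_i+1}$ forces $\text{ord}_0(\tilde f_i)=k_i$, which is strictly increasing in $i$ by construction.

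It remains to prove completeness, which I expect to carry the real content. Telescoping the orthogonal decompositions $H_j=(H_j\ominus H_{j+1})\oplus H_{j+1}$ gives $\text{span}\{\tilde f_0,\dots,\tilde f_{i-1}\}=H_0\ominus H_i$, and passing to the limit over $i$ yields $\overline{\text{span}}\{\tilde f_i\}_{i\ge0}=H_0\ominus\bigcap_{i}H_i=A^2(U,e^{-\varphi_1})\ominus\bigcap_{k}S_k$. Thus the $\tilde f_i$ form a complete orthonormal basis if and only if $\bigcap_kS_k=\{0\}$. Now $\bigcap_kS_k$ consists exactly of the $f\in A^2(U,e^{-\varphi_1})$ vanishing to infinite order at $0$; since $U$ is connected (in the applications it is a coordinate disk centred at $0$), the identity theorem forces such an $f$ to vanish identically, so $\bigcap_kS_k=\{0\}$ and completeness follows. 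The main obstacle is precisely this last reduction: one must ensure that no nonzero $L^2$ holomorphic function is invisible to $\text{ord}_0$, which is why the connectedness of $U$ is essential — for a disconnected $U$ the functions supported away from $0$ would span an infinite–dimensional subspace undetected by the filtration, and the conclusion would fail.
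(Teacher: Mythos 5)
Your proof is correct, but it reaches the basis by a genuinely different route from the paper. The paper constructs each basis element as the solution of an extremal problem: for every attained vanishing order $k$ it minimizes $\|\cdot\|_1$ over the affine set $\{f:(\partial_z^kf)(0)=1,\ (\partial_z^jf)(0)=0\text{ for }j<k\}$, proving existence of the minimizer by a normal-families and Fatou argument, orthogonality by the variational perturbation $f_k+\alpha g$, and completeness by inductively matching Taylor coefficients at $0$. You instead filter the space by the closed subspaces $S_k=\{f:\mathrm{ord}_0(f)\ge k\}$ (closed precisely because of Lemma \ref{module bounded by norm} and Remark \ref{derivatives bounded by norm}, the same input the paper uses), observe that each quotient has dimension at most one, take unit vectors in the one-dimensional gaps $H_i\ominus H_{i+1}$, and reduce completeness to $\bigcap_kS_k=\{0\}$ via the identity theorem. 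The two constructions in fact yield the same functions up to unimodular scalars, since the paper's normalized extremal function $f_k$ is shown to be orthogonal to $S_{k+1}$ while lying in $S_k$, i.e.\ it spans $S_k\ominus S_{k+1}$; but your argument dispenses entirely with the compactness/Fatou step and the inductive coefficient construction, at the cost of being less explicit about what the basis elements are (the extremal characterization is occasionally useful in this circle of ideas, e.g.\ for Bergman-kernel-type identities). Your remark that connectedness of $U$ is essential for the completeness step is accurate and is actually a point the paper glosses over: its own final step ``$h\equiv H$'' from equality of all derivatives at $0$ needs the identity theorem just as much as yours does, and the hypothesis is harmless only because the lemma is applied with $U=\Delta$. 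The only cosmetic mismatch is that your argument, as written, also covers the case where only finitely many orders are attained, whereas the statement (and the paper) tacitly assume $A^2(U,e^{-\varphi_1})$ is infinite dimensional; this does not affect correctness.
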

\begin{proof}For any $k\in\mathbb{Z}_{\ge 0}$, denote $$A^2_k(U,e^{-\varphi_1}):=\{f\in A^2(U,e^{-\varphi_1}): (\partial^k_zf)(0)=1\  \& \ (\partial^j_zf)(0)=0 \text{ for } j<k\}.$$
If $A^2_k(U,e^{-\varphi_1})\neq \emptyset$, denote $S_k:=\inf\{||f||_1: f\in A^2_k(U,e^{-\varphi_1})\}$. We prove that there exists a $F_k\in A^2_k(U,e^{-\varphi_1})$ such that $S_k=||F_k||_1$.

Let $\{F_j\}_{j\in\mathbb{Z}_{\ge 0}}$ be a sequence of holomorphic functions in $A^2_k(U,e^{-\varphi_1})$ such that $||F_j||_1\to S_k$ as $j\to+\infty$. It follows from Lemma \ref{module bounded by norm} that for any relative compact subset $U_1\Subset U$, we have $\sup_{U_1}|F_j|\le C_1 ||F_j||_1$ for some $C_1>0$. As $||F_j||_1\to S_k$ when $j\to+\infty$, we know that $\sup_{j}\sup_{U_1}|F_j|<C_2$ for some $C_2>0$. Hence there exists a subsequence of $\{F_j\}_{j\in\mathbb{Z}_{\ge 0}}$ (also denote by $\{F_j\}_{j\in\mathbb{Z}_{\ge 0}}$) compactly convergent to a holomorphic function $F_k$ on $U$. As $\{F_j\}_{j\in\mathbb{Z}_{\ge 0}}$ is compactly convergent to $F_k$ and $F_j\in A^2_k(U,e^{-\varphi_1})$, we know that $(\partial^k_zF_k)(0)=1$ and $(\partial^j_zF_k)(0)=0 \text{ for } j<k$. It follows from Fatou's Lemma that we know
$$\int_U |F_k|^2e^{-\varphi_1}d\lambda_U\le \liminf_{j\to+\infty} \int_U |F_j|^2e^{-\varphi_1}d\lambda_U=S_k,$$
which implies that $F_k\in A^2_k(U,e^{-\varphi_1})$ and $||F_k||_1\le S_k$. By definition of $S_k$, we know that $||F_k||_1= S_k$.

Denote $\mathbb{K}:=\{k\in \mathbb{Z}_{\ge 0}: A^2_k(U,e^{-\varphi_1})\neq \emptyset\}$. Note that $\mathbb{K}$ is a countable infinite subset of $\mathbb{Z}_{\ge 0}$.
For any $k\in \mathbb{K}$,  we take $f_k:=\frac{F_k}{||F_k||_1}$.

 Now we prove $\{f_k(z)\}_{k\in \mathbb{K}}$ is a complete orthonormal basis of $A^2(U,e^{-\varphi_1})$.

Firstly, we prove that $\{f_k(z)\}_{k\in \mathbb{K}}$ is orthonormal basis.
Let $k\in \mathbb{K}$. We prove that for any $g\in A^2(U,e^{-\varphi_1})$ satisfying $(\partial^j_z g)(0)=0$ for any $j\le k$, we have $\langle f_k,g\rangle_1=0$, where $\langle\cdot,\cdot\rangle_1$ is the inner product on $A^2(U,e^{-\varphi_1})$. Note that for any $\alpha\in \mathbb{C}$, we have $f_k+\alpha g\in A^2_k(U,e^{-\varphi_1})$. Hence for any $\alpha\in \mathbb{C}$, $||f_k+\alpha g||_1\ge S_k$. However let $\alpha=-\frac{\langle f_k,g\rangle_1}{||g||_1^2}$, then by direct computation we have
$$||f_k+\alpha g||^2_1=||f_k||_1^2-|\frac{\langle f_k,g\rangle_1}{||g||_1^2}|^2.$$
If $\langle f_k,g\rangle_1\neq0$, we will have $||f_k+\alpha g||_1<S_k$, which is a contradiction. Hence we have $\langle f_k,g\rangle_1=0$. By the above discussion, we also know that $f_k$ is the unique holomorphic function in $A^2_k(U,e^{-\varphi_1})$ such that  $||f_k||_1= S_k$.
 For any $k_1<k_2\in \mathbb{K}$, following above discussion, we have $\langle f_{k_1},f_{k_2}\rangle_1=0$. By definition, we also have $||f_{k}||_1=1$. Hence $\{f_k(z)\}_{k\in \mathbb{K}}$ is orthonormal basis.

Secondly, we prove that $\{f_k(z)\}_{k\in \mathbb{K}}$ is complete. If $k\in \mathbb{Z}_{\ge 0}\backslash\mathbb{K}$, then we set $f_k(z)\equiv 0$. Let $h\in A^2(U,e^{-\varphi_1})$ be a holomorphic function. Let $\{a_k\}_{k\in\mathbb{Z}_{\ge 0}}$ be a sequence of complex numbers which will be determined later. Let
$$h_k(z)=\sum_{j\le k}a_jf_j(z).$$
Now we will choose $\{a_k\}_{k\in\mathbb{Z}_{\ge 0}}$ such that for any $j\le k$,
\begin{equation}\label{determine coeff}
(\partial^j_zh_k)(0)=(\partial^j_zh)(0).
\end{equation}

If $0\in \mathbb{Z}_{\ge 0}\backslash\mathbb{K}$, we set $a_0=0$. If $0\notin \mathbb{Z}_{\ge 0}\backslash\mathbb{K}$, we set $a_0=h(0)||F_0||_1$. By the construction of $\{f_i(z)\}_{i\in \mathbb{Z}_{\ge 0}}$, we know that $h_k(0)=h(0)$. Assume that for any $j\le k-1$, the sequence $\{a_j\}_{0\le j\le k-1}$ has been chosen. Now we choose the number $a_k$. If $k\in \mathbb{Z}_{\ge 0}\backslash\mathbb{K}$, then
$A^2_k(U,e^{-\varphi_1})=\emptyset$. We also note that $h-h_{k-1}$ satisfies $$\bigg(\partial^j_z(h-h_{k-1})\bigg)(0)=0,\text{ for any } j\le k-1.$$
Then we have $\bigg(\partial^k_z(h-h_{k-1})\bigg)(0)=0$, otherwise $\frac{(h(z)-h_{k-1}(z))}{(\partial^k_z(h-h_{k-1}))(0)}\in A^2_k(U,e^{-\varphi_1})$ which is a contradiction. Since $\bigg(\partial^k_z(h-h_{k-1})\bigg)(0)=0$, we set $a_k=0$. If $k\in \mathbb{K}$, it follows from
 $(\partial^k_zh_k)(0)=(\partial^k_zh)(0)$ that we have the following equation
 \begin{equation}\label{equation of coeff}
 a_0(\partial^k_zf_0)(0)+a_1(\partial^k_zf_1)(0)+\cdots+a_k(\partial^k_zf_k)(0)=(\partial^k_zh)(0).
 \end{equation}
Note that $(\partial^k_zf_k)(0)=\frac{1}{||F_k||_1}\neq 0$ and $\{a_j\}_{0\le j\le k-1}$ has been chosen. The equation \eqref{equation of coeff} can be solved, i.e. we can find $a_k$ such that $(\partial^k_zh_k)(0)=(\partial^k_zh)(0)$ holds. Hence we can choose $\{a_k\}_{k\in\mathbb{Z}_{\ge 0}}$ by induction
 such that for any $j\le k$,
\begin{equation}\nonumber
(\partial^j_zh_k)(0)=(\partial^j_zh)(0).
\end{equation}

Note that for any $j\le k$, we have $\bigg(\partial^j_z(h_k-h)\bigg)(0)=0$. Hence we know that
$\langle h_k-h,f_k\rangle_1=0$, i.e. $$\langle h,f_k\rangle_1=\langle h_k,f_k\rangle_1=a_k.$$
By Bessel inequality, we have $\sum_{j=0}^{+\infty}|a_j|^2\le ||h||_1^2$. Denote $H(z):=\sum_{j=0}^{+\infty}a_jf_j(z)$, where the right hand-side is uniformly convergent to $H(z)$ on any compact subset of $U$ and we have $H(z)\in A^2(U,e^{-\varphi_1})$. Hence for any $k\in \mathbb{Z}_{\ge 0}$, we have
\begin{equation}\nonumber
(\partial^k_zH)(0)=\lim_{j\to +\infty}(\partial^k_zh_k)(0)=(\partial^k_zh)(0).
\end{equation}
Hence $h\equiv H$, which implies that $\{f_k(z)\}_{k\in \mathbb{K}}$ is complete.

Denote $\tilde{f}_1:=f_{k_1}$, where $k_1$ satisfies $ord_0(f_{k_1})$ is minimal among $k\in\mathbb{K}$. Denote
$\tilde{f}_2:=f_{k_2}$, where $k_2$ satisfies $ord_0(f_{k_2})$ is minimal among $k\in\mathbb{K}\backslash\{k_1\}$. Denote $\tilde{f}_i:=f_{k_i}$, where $k_i$ satisfies $ord_0(f_{k_i})$ is minimal among $k\in\mathbb{K}\backslash\{k_1,k_2,\ldots,k_{i-1}\}$.  Now  $\{ord_0(\tilde{f}_i)\}_{i\in\mathbb{Z}_{\ge 0}}$ is strictly increasing with respect to $i$. Note that  $\{\tilde{f}_i(z)\}_{i\in \mathbb{Z}_{\ge 0}}$ is a rearrangement of $\{f_k(z)\}_{k\in \mathbb{K}}$ and $\{f_k(z)\}_{k\in \mathbb{K}}$ is a complete orthonormal basis of $A^2(U,e^{-\varphi_1})$. We know that $\{\tilde{f}_i(z)\}_{i\in \mathbb{Z}_{\ge 0}}$ satisfies the requirement of Lemma \ref{construction of basis in dim one}. We are done.

\end{proof}

In the following discussion, let
 $U=\Delta\subset \mathbb{C}$ be the unit disk. It follows from Lemma \ref{construction of basis in dim one} that there exists a complete orthonormal basis $\{\tilde{f}_i(z)\}_{i\in \mathbb{Z}_{\ge 0}}$ of $A^2(U,e^{-\varphi_1})$ which satisfies $\text{ord}_0(\tilde{f}_i)$ is strictly increasing with respect to $i$.  Let $W\subset \mathbb{C}^{m}$ be an open set.
Let $\{g_j(w)\}_{j\in \mathbb{Z}_{\ge 0}}$ be the complete orthonormal basis of $A^2(W,e^{-\varphi_2})$.
Let $F$ be a holomorphic function on $M:=U\times W$ satisfying that $F\in A^2(M,e^{-\varphi})$. It follows from Lemma \ref{decomp} that $F=\sum_{l\ge \tilde{k}} z^lF_l(w)$, where $\{F_l\}_{l\ge \tilde{k}}$ is a sequence of holomorphic functions on $W$ and $F_{\tilde{k}}(w)\not\equiv 0$. Denote $k_i:=\text{ord}_0(\tilde{f}_i)$ for any $i\in\mathbb{Z}_{\ge 0}$ and $k:=\inf_{i\in\mathbb{Z}_{\ge 0}}\{k_i\}=k_1$.

\begin{Lemma}\label{decomp integrable} We have $\tilde{k}\ge k$ and for any $l\ge \tilde{k}$, $F_l\in A^2(W,e^{-\varphi_2})$.
\end{Lemma}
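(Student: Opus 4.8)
The plan is to treat the two assertions separately, deriving both from the slice-integrability statement of Lemma \ref{slice is integrable} together with the fact that $A^2$-convergence of a basis expansion forces locally uniform convergence of all derivatives at a point, as recorded in Remark \ref{derivatives bounded by norm}.

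For the inequality $\tilde{k}\ge k$, I would first fix an arbitrary $w_0\in W$ and apply Lemma \ref{slice is integrable} (with the trivial multi-index in the $w$-variable) to conclude that the slice $F(\cdot,w_0)$ belongs to $A^2(U,e^{-\varphi_1})$. Expanding $F(\cdot,w_0)=\sum_i a_i\tilde{f}_i$ in the chosen orthonormal basis, the partial sums converge in the $A^2$-norm and hence, by the estimate of Lemma \ref{module bounded by norm}, converge locally uniformly together with all their $z$-derivatives at $0$ (Remark \ref{derivatives bounded by norm}). Since every $\tilde{f}_i$ vanishes at $0$ to order $k_i\ge k_1=k$, each partial sum vanishes there to order $\ge k$, and therefore so does the limit; that is, $\mathrm{ord}_0 F(\cdot,w_0)\ge k$. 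As this holds for every $w_0\in W$ and $F_{\tilde{k}}\not\equiv 0$ lets me choose $w_0$ with $F_{\tilde{k}}(w_0)\ne 0$, for which $F(z,w_0)=\sum_{l\ge\tilde{k}}z^l F_l(w_0)$ has leading term of order exactly $\tilde{k}$, I obtain $\tilde{k}=\mathrm{ord}_0 F(\cdot,w_0)\ge k$.

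For the integrability $F_l\in A^2(W,e^{-\varphi_2})$, I would note that the power-series coefficients are given by $F_l(w)=\frac{1}{l!}\partial_z^l F(0,w)$. Applying Lemma \ref{slice is integrable} with $z_0=0$ and the one-variable multi-index $\alpha=l$ (recall $U\subset\mathbb{C}$, so $n=1$) gives $\partial_z^l F(0,\cdot)\in A^2(W,e^{-\varphi_2})$ directly, whence $F_l\in A^2(W,e^{-\varphi_2})$ for every $l\ge\tilde{k}$.

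The only mildly delicate point, and the one I would take care to spell out, is the passage from $A^2$-convergence of the basis expansion to the vanishing of the low-order Taylor coefficients of the slice; everything else is an immediate invocation of the preparatory lemmas. The crux is recognizing that the ordering condition $\mathrm{ord}_0(\tilde{f}_i)=k_i\ge k$ propagates to every element of $A^2(U,e^{-\varphi_1})$, which is precisely what pins down $\tilde{k}\ge k$ once a slice realizing order exactly $\tilde{k}$ is produced from $F_{\tilde{k}}\not\equiv 0$.
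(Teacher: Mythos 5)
Your proof is correct, but it takes a genuinely different route from the paper's. The paper expands $F$ in the product orthonormal basis $\{\tilde f_i(z)g_j(w)\}$ of Lemma \ref{basis of product}, regroups the series by powers of $z$ using the Taylor expansions $\tilde f_i=\sum_{l\ge k_i}b_{il}z^l$, observes that only the finite index set $\mathbb{I}_l=\{i:k_i\le l\}$ contributes to the coefficient of $z^l$ (this is where the strict monotonicity of $\mathrm{ord}_0(\tilde f_i)$ enters), and then invokes the uniqueness in Lemma \ref{decomp} to identify $F_l$ as a finite linear combination of functions in $A^2(W,e^{-\varphi_2})$; the inequality $\tilde k\ge k$ falls out because the regrouped series has no terms below $l=k$. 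You instead work with slices: Lemma \ref{slice is integrable} puts $F(\cdot,w_0)$ in $A^2(U,e^{-\varphi_1})$, whose elements all vanish to order $\ge k$ at $0$ (which you re-derive from norm convergence of the basis expansion plus Remark \ref{derivatives bounded by norm}, though it also follows directly from the construction of $\mathbb{K}$ in Lemma \ref{construction of basis in dim one}), and a slice with $F_{\tilde k}(w_0)\ne 0$ pins down $\tilde k\ge k$; for the integrability you identify $F_l=\frac{1}{l!}\partial_z^lF(0,\cdot)$ via the uniqueness of the Taylor expansion in $z$ and apply Lemma \ref{slice is integrable} at $z_0=0$. Your argument for the second assertion is shorter and bypasses Lemma \ref{basis of product} and the finite-index bookkeeping entirely; the paper's version yields the explicit formula $F_l=\sum_{i\in\mathbb{I}_l}b_{il}\left(\sum_j a_{ij}g_j\right)$, which is more information than the lemma asserts but is not needed elsewhere. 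Both arguments ultimately rest on Lemma \ref{construction of basis in dim one} for the meaning of $k$, and both are sound.
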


\begin{proof}
It follows from Lemma \ref{basis of product} that $\{\tilde{f}_i(z) g_j(w)\}_{i,j\in\mathbb{Z}_{\ge 0}}$ is a complete orthonormal basis of $A^2(M,e^{-\varphi})$.

As $F\in A^2(M,e^{-\varphi})$, we know that
$$F(z,w)=\sum_{i,j\in\mathbb{Z}_{\ge 0}}a_{ij}\tilde{f}_i(z) g_j(w)=\sum_{i\in \mathbb{Z}_{\ge 0}}\tilde{f}_i(z)\sum_{j\in\mathbb{Z}_{\ge 0}}a_{ij}g_j(w),$$ for some $a_{ij}\in \mathbb{C}$, where the right-hand side  is uniformly convergent on any compact subset of $M$. It follows from $F\in A^2(M,e^{-\varphi})$ that we know $\sum_{j\in\mathbb{Z}_{\ge 0}}a_{ij}g_j(w)\in A^2(W,e^{-\varphi_2})$ for any $i\in\mathbb{Z}_{\ge 0}$.

As $U\subset \mathbb{C}$ is unit disc, we assume that for each $i\in\mathbb{Z}_{\ge 0}$, $\tilde{f}_i(z)=\sum_{l\ge k_i}^{+\infty}b_{il}z^l$ on $U$ for some $b_{il}\in \mathbb{C}$, where the right-hand side is uniformly convergent on any compact subset of $U$. Then we know that $F(z,w)=\sum_{l\ge k}z^l\bigg(\sum_{i\in \mathbb{I}_l}b_{il}(\sum_{j\in\mathbb{Z}_{\ge 0}}a_{ij}g_j(w))\bigg)$, where the index set $\mathbb{I}_l:=\{i\in\mathbb{Z}_{\ge 0}:k_i\le l\}$ and the right-hand side  is uniformly convergent on any compact subset of $M$. For fixed $l\ge k$, as $k_i\in \mathbb{Z}_{\ge 0}$ is strictly increasing with respect to $i$, we know that $\mathbb{I}_l$ is a finite set for any $l\ge k$.

Recall that it follows from Lemma \ref{decomp} that $F=\sum_{l\ge \tilde{k}} z^lF_l(w)$ and $\{F_l\}_{l\ge \tilde{k}}$ is unique. The uniqueness of $\{F_l\}_{l\ge \tilde{k}}$ implies that we have $\sum_{i\in \mathbb{I}_l}b_{il}(\sum_{j\in\mathbb{Z}_{\ge 0}}a_{ij}g_j(w))=0$ for $k\le l<\tilde{k}$ and $F_l(w)=\sum_{i\in \mathbb{I}_l}b_{il}(\sum_{j\in\mathbb{Z}_{\ge 0}}a_{ij}g_j(w))$ for any $l\ge \tilde{k}$. It follows from $\mathbb{I}_l$ is a finite set for any $l\ge k$ and $\sum_{j\in\mathbb{Z}_{\ge 0}}a_{ij}g_j(w)\in A^2(W,e^{-\varphi_2})$ for any $i\in\mathbb{Z}_{\ge 0}$. that we have $F_l\in A^2(W,e^{-\varphi_2})$ for any $l\ge \tilde{k}$.
\end{proof}

\begin{Remark}\label{remark of decomp integrable} By the definition of $k$, we know that for any $l\ge k$, $|z|^{2l}e^{-\varphi_1}$ is $L^1$ integrable near $0\in\mathbb{C}$. As $\tilde{k}\ge k$, we have $|z|^{2l}e^{-\varphi_1}$ is $L^1$ integrable near $0\in\mathbb{C}$ for any $l\ge \tilde{k}$.
\end{Remark}

\subsection{Concavity property on weakly pseudoconvex K\"ahler manifolds} In this section, we recall some results about the concavity property on weakly pseudoconvex K\"ahler manifolds \cite{GMY-boundary3} (see also \cite{GMY-boundary2}).
Let $M$ be a complex manifold. Let $X$ and $Z$ be closed subsets of $M$. We call that a triple $(M,X,Z)$ satisfies condition $(A)$, if the following two statements hold:

$\uppercase\expandafter{\romannumeral1}.$ $X$ is a closed subset of $M$ and $X$ is locally negligible with respect to $L^2$ holomorphic functions; i.e., for any local coordinated neighborhood $U\subset M$ and for any $L^2$ holomorphic function $f$ on $U\backslash X$, there exists an $L^2$ holomorphic function $\tilde{f}$ on $U$ such that $\tilde{f}|_{U\backslash X}=f$ with the same $L^2$ norm;

$\uppercase\expandafter{\romannumeral2}.$ $Z$ is an analytic subset of $M$ and $M\backslash (X\cup Z)$ is a weakly pseudoconvex K\"ahler manifold.

Let $M$ be an $n-$dimensional complex manifold.
Assume that $(M,X,Z)$  satisfies condition $(A)$. Let $K_M$ be the canonical line bundle on $M$.
Let $dV_M$ be a continuous volume form on $M$. Let $F$ be a holomorphic function on $M$.
Assume that $F$ is not identically zero. Let $\psi$ be a plurisubharmonic function on $M$.
Let $\varphi$ be a Lebesgue measurable function on $M$ such that $\varphi+\psi$ is a plurisubharmonic function on $M$.

Let $T\in [-\infty,+\infty)$.
Denote that
$$\Psi:=\min\{\psi-2\log|F|,-T\}.$$
For any $z \in M$ satisfying $F(z)=0$,
we set $\Psi(z)=-T$.

\begin{Definition}[\cite{GMY-boundary3}]
We call that a positive measurable function $c$  on $(T,+\infty)$ is in class $\tilde{P}_{T,M,\Psi}$ if the following two statements hold:
\par
$(1)$ $c(t)e^{-t}$ is decreasing with respect to $t$;
\par
$(2)$ For any $t_0> T$, there exists a closed subset $E_0$ of $M$ such that $E_0\subset Z\cap \{\Psi(z)=-\infty\}$ and for any compact subset $K\subset M\backslash E_0$, $e^{-\varphi}c(-\Psi)$ has a positive lower bound on $K\cap \{\Psi<-t_0\}$ .
\end{Definition}

For $f_{z_0}\in J(\Psi)_{z_0}$ and $a,b\ge 0$, we call $f_{z_0}\in I\big(a\Psi+b\varphi\big)_{z_0}$ if there exist $t\gg T$ and a neighborhood $V$ of $z_0$,
such that $\int_{\{\Psi<-t\}\cap V}|f|^2e^{-a\Psi-b\varphi}dV_M<+\infty$.
Note that $I\big(a\Psi+b\varphi\big)_{z_0}$ is an $\mathcal{O}_{M,z_0}$-submodule of $J(\Psi)_{z_0}$.

Let $Z_0$ be a subset of $\cap_{t>T} \overline{\{\Psi<-t\}}$. Let $f$ be a holomorphic $(n,0)$ form on $\{\Psi<-t_0\}\cap V$, where $V\supset Z_0$ is an open subset of $M$ and $t_0\ge T$
is a real number.
Let $J_{z_0}$ be an $\mathcal{O}_{M,z_0}$-submodule of $J(\Psi)_{z_0}$ such that $I\big(\Psi+\varphi\big)_{z_0}\subset J_{z_0}$,
where $z_0\in Z_0$.
Let $J$ be the $\mathcal{O}_{M,z_0}$-module sheaf with stalks $J_{z_0}$, where $z_0\in Z_0$.
Denote the minimal $L^{2}$ integral related to $J$
\begin{equation}
\label{def of g(t) for boundary pt}
\begin{split}
\inf\Bigg\{ \int_{ \{ \Psi<-t\}}|\tilde{f}|^2e^{-\varphi}c(-\Psi)&: \tilde{f}\in
H^0(\{\Psi<-t\},\mathcal{O} (K_M)  ) \\
&\&\, (\tilde{f}-f)_{z_0}\in
\mathcal{O} (K_M)_{z_0} \otimes J_{z_0},\text{ for any }  z_0\in Z_0 \Bigg\}
\end{split}
\end{equation}
by $G(t;c,\Psi,\varphi,J,f)$, where $t\in[T,+\infty)$, $c$ is a nonnegative function on $(T,+\infty)$ and $|f|^2:=\sqrt{-1}^{n^2}f\wedge \bar{f}$ for any $(n,0)$ form $f$.
Without misunderstanding, we denote $G(t;c,\Psi,\varphi,J,f)$ by $G(t)$ for simplicity. For various $c(t)$, we denote $G(t;c,\Psi,\varphi,J,f)$ by $G(t;c)$ respectively for simplicity.

We recall the concavity property for $G(t)$.

\begin{Theorem}[\cite{GMY-boundary3}]\label{thm:concavity}
Let $c\in\tilde{P}_{T,M,\Psi}$ satisfying that $\int_{T_1}^{+\infty}c(s)e^{-s}ds<+\infty$, where $T_1>T$. If there exists $t \in [T,+\infty)$ satisfying that $G(t)<+\infty$, then $G(h^{-1}(r))$ is concave with respect to  $r\in (0,\int_{T}^{+\infty}c(t)e^{-t}dt)$, $\lim\limits_{t\to T+0}G(t)=G(T)$ and $\lim\limits_{t \to +\infty}G(t)=0$, where $h(t)=\int_{t}^{+\infty}c(s)e^{-s}ds$.
\end{Theorem}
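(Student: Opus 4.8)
The plan is to establish concavity of $r\mapsto G(h^{-1}(r))$ by producing, for every base level $t_0$, a global supporting line from above, and to treat the two boundary limits by monotonicity together with a normal-families argument. Throughout, $h(t)=\int_t^{+\infty}c(s)e^{-s}\,ds$ is finite (by the hypothesis $\int_{T_1}^{+\infty}c(s)e^{-s}\,ds<+\infty$) and strictly decreasing with $h'(t)=-c(t)e^{-t}$, so $r=h(t)$ is a legitimate decreasing change of variable onto $(0,\int_T^{+\infty}c(s)e^{-s}\,ds)$. Moreover $G$ is nonincreasing in $t$: for $t'<t''$ the set $\{\Psi<-t''\}$ is contained in $\{\Psi<-t'\}$, so restricting any competitor for $G(t')$ to the smaller sublevel set only decreases the weighted integral and leaves the germ conditions at $Z_0$ untouched, giving $G(t'')\le G(t')$.

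I would first reduce to a regular setting. Condition $(A)$ says that $X$ is locally $L^2$-negligible and $Z$ is analytic with $M\setminus(X\cup Z)$ weakly pseudoconvex Kähler; hence competitors and their norms are unchanged by deleting $X\cup Z$, and it suffices to argue on the Kähler manifold $M\setminus(X\cup Z)$. There I would exhaust by relatively compact weakly pseudoconvex domains, replace $\psi$ and $\varphi+\psi$ by smooth plurisubharmonic functions decreasing to them, and approximate $c\in\tilde P_{T,M,\Psi}$ by smooth functions with the same monotonicity of $c(s)e^{-s}$. On each smooth, relatively compact piece the estimate below is a direct Hörmander argument; the conclusion for the original data then follows by letting the exhaustion and the approximations converge, extracting weak-$L^2$ limits of near-minimizers (compactness being furnished by the pointwise bound of Lemma \ref{module bounded by norm}) and using monotonicity in each parameter.

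\textbf{Supporting line.} Fix $t_0>T$ (using some $t$ with $G(t)<+\infty$ and monotonicity to ensure finiteness on a neighborhood), and let $u$ be a minimizer for $G(t_0)$ on $\{\Psi<-t_0\}$. The minimizer determines a slope $\lambda(t_0)\ge0$ — morally the normalized weighted mass of $u$ across the level $\{\Psi=-t_0\}$ — and the target is the two-sided bound
\begin{equation}\nonumber
G(t)\le G(t_0)+\lambda(t_0)\big(h(t)-h(t_0)\big)\qquad\text{for all }t\in[T,+\infty),
\end{equation}
which is precisely a supporting line from above for $r\mapsto G(h^{-1}(r))$ at $r=h(t_0)$. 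For $t>t_0$ one restricts $u$ to $\{\Psi<-t\}$, obtaining the admissible estimate $G(t)\le G(t_0)-\int_{\{-t\le\Psi<-t_0\}}|u|^2e^{-\varphi}c(-\Psi)$, which yields the bound once the annular mass is shown to dominate $\lambda(t_0)\big(h(t_0)-h(t)\big)$. For $t<t_0$, where $\{\Psi<-t\}\supset\{\Psi<-t_0\}$, I would use the Ohsawa--Takegoshi--Guan--Zhou extension: with the piecewise-linear cutoff $v$ of $\Psi$ equal to $1$ on $\{\Psi\le-t_0\}$, to $0$ on $\{\Psi\ge-t\}$ and linear in between, solve $\bar\partial\gamma=\bar\partial\big(v(\Psi)u\big)$ on $\{\Psi<-t\}$ with the twisted $L^2$ estimate against $e^{-\varphi}c(-\Psi)$. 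The curvature positivity is supplied by $\varphi+\psi$ being plurisubharmonic together with the monotonicity of $c(s)e^{-s}$ (the holomorphicity of $F$ renders $-2\log|F|$ pluriharmonic off the analytic set $\{F=0\}$, negligible for the estimate), and the optimal constant makes the gain exactly $\int_t^{t_0}c(s)e^{-s}\,ds=h(t)-h(t_0)$. Setting $F:=v(\Psi)u-\gamma$, the extra vanishing of $\gamma$ forces $\gamma_{z_0}\in\mathcal O(K_M)_{z_0}\otimes I(\varphi+\Psi)_{z_0}\subset\mathcal O(K_M)_{z_0}\otimes J_{z_0}$ at every $z_0\in Z_0$; since $v(\Psi)\equiv1$ near such $z_0$ this gives $(F-f)_{z_0}\in\mathcal O(K_M)_{z_0}\otimes J_{z_0}$, so $F$ is admissible for $G(t)$ and produces the same bound on the side $t<t_0$. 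A supporting line at every $h(t_0)$ is exactly concavity.

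Finally I would settle the two limits. Monotonicity gives $\lim_{t\to T+0}G(t)=\sup_{t>T}G(t)\le G(T)$; for the reverse, a weak limit $u_*$ of near-minimizers for $G(t)$ as $t\downarrow T$ (on the increasing domains $\{\Psi<-t\}\uparrow\{\Psi<-T\}$, uniformly bounded by $G(T)$) is admissible for $G(T)$ and, by the Fatou-type lower semicontinuity of the weighted $L^2$ norm used in the completeness argument above, satisfies $G(T)\le\|u_*\|^2\le\lim_{t\to T+0}G(t)$, whence equality. For $\lim_{t\to+\infty}G(t)=0$, the sublevel sets shrink to a measure-zero set and admissible competitors (for instance cutoffs $v(\Psi)u$ of a fixed extension) have weighted mass tending to $0$, so $G(t)\to0$, i.e. $g(r)\to0$ as $r=h(t)\to0^+$. \textbf{The main obstacle} is the sharp-constant matching in the Supporting line step: one must run the extension with the optimal Ohsawa--Takegoshi constant so that the upper slope it produces coincides with the lower slope $\lambda(t_0)$ read off the minimizer by restriction — any non-sharp constant degrades the conclusion to a mere star-shapedness ($g(r)/r$ monotone) rather than full concavity — while simultaneously keeping the $\bar\partial$-correction $\gamma$ inside the module $\mathcal O(K_M)_{z_0}\otimes J_{z_0}$ at all boundary points $z_0\in Z_0$.
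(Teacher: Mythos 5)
The paper does not prove Theorem \ref{thm:concavity}; it quotes it from \cite{GMY-boundary3}, so the comparison is with the proof given there. At the level of strategy you have reconstructed that proof: monotonicity of $G$ in $t$, a supporting line for $r\mapsto G(h^{-1}(r))$ at each $r=h(t_0)$ obtained by restriction of a (near-)minimizer on the side $t>t_0$ and by an Ohsawa--Takegoshi-type extension with the optimal constant across the band $\{-t_0\le\Psi<-t\}$ on the side $t<t_0$, membership of the $\bar\partial$-correction $\gamma$ in $\mathcal O(K_M)_{z_0}\otimes I(\varphi+\Psi)_{z_0}\subset\mathcal O(K_M)_{z_0}\otimes J_{z_0}$ forced by the extra factor $e^{-\Psi}$ in the twisted estimate, and the two boundary limits via weak limits of near-minimizers (for $t\to T+0$) and via the shrinking of $\{\Psi<-t\}$ to a null set (for $t\to+\infty$). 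Your closing remark is also accurate: with a non-optimal constant the argument degrades and one cannot conclude concavity.

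Two steps, however, are genuine gaps as written. First, your reduction ``replace $\psi$ and $\varphi+\psi$ by smooth plurisubharmonic functions decreasing to them'' on $M\setminus(X\cup Z)$ is not available: decreasing approximation by smooth plurisubharmonic functions (Lemma \ref{l:FN1}) is a theorem about \emph{Stein} manifolds, while condition $(A)$ only makes $M\setminus(X\cup Z)$ weakly pseudoconvex K\"ahler, where no such global regularization exists in general. The proof in \cite{GMY-boundary3} (following the weakly pseudoconvex K\"ahler setting of \cite{GMY-boundary2}) does not smooth the weights at all: the key extension lemma is proved directly with the singular weight $e^{-\varphi}$, using $L^2$ existence theorems for $\bar\partial$ on complete K\"ahler manifolds that tolerate singular plurisubharmonic weights, together with the exhaustion by sublevel sets and the exceptional set $E_0$ built into the definition of the class $\tilde P_{T,M,\Psi}$. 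Second, at every place where you pass to a limit of competitors --- the existence of the minimizer $u$ at $t_0$, the admissibility of the weak limit $u_*$ as $t\to T+0$, and the convergence of your exhaustion/approximation scheme --- you must preserve the side condition $(\tilde f-f)_{z_0}\in\mathcal O(K_M)_{z_0}\otimes J_{z_0}$. Compactness from the pointwise bound of Lemma \ref{module bounded by norm} gives local uniform convergence, but it does not by itself keep the limit germ inside $J_{z_0}$; this is exactly the Grauert--Remmert closedness property of submodules under uniform convergence (Lemma \ref{closedness}), which is an indispensable ingredient of the cited proof precisely because $J_{z_0}$ is only assumed to be a submodule of $J(\Psi)_{z_0}$ containing $I(\varphi+\Psi)_{z_0}$. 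With these two repairs --- singular-weight $L^2$ estimates in place of smoothing, and the closedness lemma at each limit passage --- your outline coincides with the argument of \cite{GMY-boundary3}.
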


When $F\equiv1$ and $\psi(z)=-\infty$ for any $z\in Z_0$, $J_{z}$ is an ideal of $\mathcal{O}_{M,z}$ for any $z\in Z_0$ and Theorem \ref{thm:concavity} degenerates to the concavity property  with respect to the ideals at inner points (\cite{GMY-boundary2}, see also \cite{GMY-concavity2}).

Let $c(t)$ be a nonnegative measurable function on $(T,+\infty)$. Denote that
\begin{equation}\nonumber
\begin{split}
\mathcal{H}^2(t;c):=\Bigg\{\tilde{f}:\int_{ \{ \Psi<-t\}}|\tilde{f}|^2e^{-\varphi}c(-\Psi)<+\infty,\  \tilde{f}\in
H^0(\{\Psi<-t\},\mathcal{O} (K_M)  ) \\
\& (\tilde{f}-f)_{z_0}\in
\mathcal{O} (K_M)_{z_0} \otimes J_{z_0},\text{ for any }  z_0\in Z_0  \Bigg\},
\end{split}
\end{equation}
where $t\in[T,+\infty)$.

The following corollary gives a necessary condition for the concavity  degenerating to linearity.

\begin{Corollary}[\cite{GMY-boundary3}]\label{c:necessary condition for linear of G}
Let $c\in\tilde{P}_{T,M,\Psi}$ satisfying that $\int_{T_1}^{+\infty}c(s)e^{-s}ds<+\infty$, where $T_1>T$.
Assume that $G(t)\in(0,+\infty)$ for some $t\ge T$, and $G({h}^{-1}(r))$ is linear with respect to $r\in[0,\int_T^{+\infty}c(s)e^{-s}ds)$, where ${h}(t)=\int_{t}^{+\infty}c(s)e^{-s}ds$.

Then there exists a unique holomorphic $(n,0)$ form $\tilde{F}$ on $\{\Psi<-T\}$
such that $(\tilde{F}-f)_{z_0}\in\mathcal{O} (K_M)_{z_0} \otimes J_{z_0}$ holds for any  $z_0\in Z_0$,
and $G(t)=\int_{\{\Psi<-t\}}|\tilde{F}|^2e^{-\varphi}c(-\Psi)$ holds for any $t\ge T$.

Furthermore
\begin{equation*}
\begin{split}
  \int_{\{-t_1\le\Psi<-t_2\}}|\tilde{F}|^2e^{-\varphi}a(-\Psi)=\frac{G(T_1;c)}{\int_{T_1}^{+\infty}c(t)e^{-t}dt}
  \int_{t_2}^{t_1}a(t)e^{-t}dt
\end{split}
\end{equation*}
holds for any nonnegative measurable function $a$ on $(T,+\infty)$, where $T\le t_2<t_1\le+\infty$ and $T_1 \in (T,+\infty)$.
\end{Corollary}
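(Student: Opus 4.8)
The plan is to reduce the whole statement to a single rigidity fact about the extremal forms, and then to identify where the genuine analysis sits. First I would record that for each $t$ with $G(t)<+\infty$ the feasible set $\mathcal{H}^2(t;c)$ is a closed affine subspace of the Hilbert space of $L^2$ holomorphic $(n,0)$ forms on $\{\Psi<-t\}$ with weight $e^{-\varphi}c(-\Psi)$; closedness holds because $L^2$ limits stay holomorphic and the germ conditions $(\cdot-f)_{z_0}\in\mathcal{O}(K_M)_{z_0}\otimes J_{z_0}$ pass to the limit, both via the local bounds of Lemma \ref{module bounded by norm} and Remark \ref{derivatives bounded by norm}. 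Hence any minimizing sequence is Cauchy and there is a unique minimizer $F_t$ with $\int_{\{\Psi<-t\}}|F_t|^2e^{-\varphi}c(-\Psi)=G(t)$. Since $G(h^{-1}(r))$ is assumed linear and $\lim_{t\to+\infty}G(t)=0$ by Theorem \ref{thm:concavity}, the line passes through the origin, so $G(t)=\lambda h(t)=\lambda\int_t^{+\infty}c(s)e^{-s}ds$, where $\lambda=G(T_1;c)/\int_{T_1}^{+\infty}c(s)e^{-s}ds$ is independent of $T_1>T$ (this independence is precisely the linearity). In particular $G(T)=\lim_{t\to T+}G(t)=\lambda h(T)<+\infty$, so the top minimizer $F_T=:\tilde{F}$ on $\{\Psi<-T\}$ exists.

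Second I would exploit that $F_t$ is the minimal-norm element of the affine space $\mathcal{H}^2(t;c)=F_t+V_t$, where $V_t$ is the linear subspace of forms whose germs lie in $\mathcal{O}(K_M)\otimes J$ at every point of $Z_0$. For $T\le t_1<t_2$ the restriction $F_{t_1}|_{\{\Psi<-t_2\}}$ lies in $\mathcal{H}^2(t_2;c)$, so $F_{t_1}|_{\{\Psi<-t_2\}}-F_{t_2}\in V_{t_2}$, which is orthogonal to $F_{t_2}$; the Pythagorean identity gives $\int_{\{\Psi<-t_2\}}|F_{t_1}|^2e^{-\varphi}c(-\Psi)=G(t_2)+\int_{\{\Psi<-t_2\}}|F_{t_1}-F_{t_2}|^2e^{-\varphi}c(-\Psi)$. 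Subtracting this from $\int_{\{\Psi<-t_1\}}|F_{t_1}|^2e^{-\varphi}c(-\Psi)=G(t_1)$ yields the annular estimate
\[
\int_{\{-t_2\le\Psi<-t_1\}}|F_{t_1}|^2e^{-\varphi}c(-\Psi)\le G(t_1)-G(t_2)=\lambda\int_{t_1}^{t_2}c(s)e^{-s}\,ds,
\]
with equality if and only if $F_{t_1}=F_{t_2}$ on $\{\Psi<-t_2\}$. Thus the entire statement reduces to the matching lower bound for these annular masses, equivalently to the nestedness $F_{t_1}|_{\{\Psi<-t_2\}}=F_{t_2}$ for all $T\le t_1<t_2$; granting it, the forms $\{F_t\}$ glue to $\tilde{F}=F_T$ and $G(t)=\int_{\{\Psi<-t\}}|\tilde{F}|^2e^{-\varphi}c(-\Psi)$ for all $t\ge T$.

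The hard part is exactly this rigidity, and I expect it to be the main obstacle. The estimate above already supplies the upper bound; the reverse inequality $\int_{\{-t_2\le\Psi<-t_1\}}|F_{t_1}|^2e^{-\varphi}c(-\Psi)\ge\lambda\int_{t_1}^{t_2}c(s)e^{-s}\,ds$ cannot come from soft Hilbert-space arguments and must be extracted from the equality case of the $L^2$ estimate underlying Theorem \ref{thm:concavity}. Concretely I would revisit that proof: the concavity is obtained by constructing, through a $\bar\partial$-equation solved with a Hörmander--Ohsawa--Takegoshi type estimate together with a cutoff in the variable $-\Psi$ over $[t_1,t_2]$, a competitor realizing the convex-combination bound. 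Saturation of that bound is precisely the linearity of $G(h^{-1}(r))$, which here holds on every subinterval since the restriction of a line is the same line; saturation forces the solution of the $\bar\partial$-equation to vanish and the correction term to be extremal, pinning the annular mass to $\lambda\int_{t_1}^{t_2}c(s)e^{-s}\,ds$ and giving $F_{t_1}=F_{t_2}$ on $\{\Psi<-t_2\}$. This is where the genuine analysis lives.

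Finally I would deduce the two remaining claims. For uniqueness, any $\tilde{F}'$ on $\{\Psi<-T\}$ with $(\tilde{F}'-f)_{z_0}\in\mathcal{O}(K_M)_{z_0}\otimes J_{z_0}$ on $Z_0$ and $G(t)=\int_{\{\Psi<-t\}}|\tilde{F}'|^2e^{-\varphi}c(-\Psi)$ restricts, at each level, to an element of $\mathcal{H}^2(t;c)$ of minimal norm, hence equals $F_t=\tilde{F}|_{\{\Psi<-t\}}$; letting $t\to T+$ gives $\tilde{F}'=\tilde{F}$. For the integral formula, let $\eta$ be the pushforward of the measure $|\tilde{F}|^2e^{-\varphi}$ under $-\Psi:\{\Psi<-T\}\to(T,+\infty]$. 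Then for every $t>T$ one has $\int_{(t,+\infty]}c\,d\eta=\int_{\{\Psi<-t\}}|\tilde{F}|^2e^{-\varphi}c(-\Psi)=G(t)=\int_{(t,+\infty]}\lambda c(s)e^{-s}\,ds$, so the Borel measures $c\,d\eta$ and $\lambda c(s)e^{-s}ds$ have equal tails on $(T,+\infty]$ and hence coincide; since $c>0$ this forces $d\eta=\lambda e^{-s}ds$. Pushing forward, for any nonnegative measurable $a$ on $(T,+\infty)$ and $T\le t_2<t_1\le+\infty$,
\[
\int_{\{-t_1\le\Psi<-t_2\}}|\tilde{F}|^2e^{-\varphi}a(-\Psi)=\int_{(t_2,t_1]}a\,d\eta=\lambda\int_{t_2}^{t_1}a(s)e^{-s}\,ds,
\]
which is the asserted identity with $\lambda=G(T_1;c)/\int_{T_1}^{+\infty}c(t)e^{-t}\,dt$.
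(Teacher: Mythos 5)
Your architecture is sound, and the outer blocks are essentially correct: the Hilbert-space reduction (a unique minimizer $F_t$ at each level, the affine structure of $\mathcal{H}^2(t;c)$, the Pythagorean identity giving the upper annular bound with equality iff $F_{t_1}=F_{t_2}$ on $\{\Psi<-t_2\}$), and the closing measure-theoretic step, where equal tails of $c\,d\eta$ and $\lambda c(s)e^{-s}ds$ for all $t>T$ force $d\eta=\lambda e^{-s}ds$ (note $\eta$ has no atom at $+\infty$ because $\{\Psi=-\infty\}$ is Lebesgue-null), which is exactly the mechanism behind the ``furthermore'' identity. The problem is that the entire content of the corollary sits in the one step you do not carry out: the reverse annular inequality, equivalently the nestedness $F_{t_1}|_{\{\Psi<-t_2\}}=F_{t_2}$. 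Saying that it ``must be extracted from the equality case of the $L^2$ estimate underlying Theorem \ref{thm:concavity}'' and that ``saturation forces the solution of the $\bar\partial$-equation to vanish'' is a placeholder, not an argument. In the actual proof in \cite{GMY-boundary3}, the concavity comes from a quantitative Ohsawa--Takegoshi-type lemma built with smoothed cut-offs of $\Psi$ and a parameter $B\rightarrow0$, and the extremal competitor appears only after a weak limit; the equality case is never literally ``the $\bar\partial$-solution vanishes''---one must run that lemma with the minimizer as datum, combine the resulting inequality with linearity of $G(h^{-1}(r))$ on every subinterval, and only then is the annular mass pinned to $\lambda\int_{t_1}^{t_2}c(s)e^{-s}ds$. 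Nothing in your soft Hilbert-space setup substitutes for this, as you yourself observe; so the proposal correctly reduces the corollary to its analytic core but does not prove it.

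Two further points need repair even granting the rigidity. First, you conclude $G(T)=\lambda h(T)<+\infty$ and set $\tilde F:=F_T$; but the hypothesis is only $\int_{T_1}^{+\infty}c(s)e^{-s}ds<+\infty$ for $T_1>T$, so $h(T)$, hence $G(T)$, may be infinite and there need be no minimizer at level $T$. The form $\tilde F$ must instead be obtained by gluing the nested minimizers $F_t$, $t>T$, on $\{\Psi<-T\}=\bigcup_{t>T}\{\Psi<-t\}$; your uniqueness paragraph, which lets $t\rightarrow T+$, already works in that formulation. Second, the closedness of $\mathcal{H}^2(t;c)$ is not a direct consequence of Lemma \ref{module bounded by norm} and Lemma \ref{closedness}: at a point $z_0\in Z_0$ not interior to the sublevel sets, the condition $(\tilde f-f)_{z_0}\in\mathcal{O}(K_M)_{z_0}\otimes J_{z_0}$ concerns classes in $J(\Psi)_{z_0}$, which are not germs of holomorphic functions at $z_0$, so Grauert--Remmert closedness does not apply as stated; moreover $c(-\Psi)$ need not dominate $e^{-\Psi}$ (only the monotonicity of $c(t)e^{-t}$ is assumed), so smallness of the $\mathcal{H}^2(t;c)$-norm of $\tilde f-f_n$ does not by itself yield membership in $I(\varphi+\Psi)_{z_0}$. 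Handling this is precisely why machinery such as Proposition \ref{module isomorphism} exists, and your proposal glosses it.
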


\begin{Remark}[\cite{GMY-boundary3}]
\label{rem:linear}
If $\mathcal{H}^2(t_0;\tilde{c})\subset\mathcal{H}^2(t_0;c)$ for some $t_0\ge T$, we have
\begin{equation*}
\begin{split}
  G(t_0;\tilde{c})=\int_{\{\Psi<-t_0\}}|\tilde{F}|^2e^{-\varphi}\tilde{c}(-\Psi)=
  \frac{G(T_1;c)}{\int_{T_1}^{+\infty}c(t)e^{-t}dt}
  \int_{t_0}^{+\infty}\tilde{c}(s)e^{-s}ds,
\end{split}
\end{equation*}
 where $\tilde{c}$ is a nonnegative measurable function on $(T,+\infty)$ and $T_1 \in (T,+\infty)$. Thus, if $\mathcal{H}^2(t;\tilde{c})\subset\mathcal{H}^2(t;c)$ for any $t>T$, then $G({h}^{-1}(r);\tilde c)$ is linear with respect to $r\in[0,\int_T^{+\infty}c(s)e^{-s}ds)$.
\end{Remark}

We recall a characterization of $G(t)=0$, where $t\ge T$.
\begin{Lemma}[\cite{GMY-boundary3}]\label{characterization of g(t)=0}Let $c\in\tilde{P}_{T,M,\Psi}$ satisfying that $\int_{T_1}^{+\infty}c(s)e^{-s}ds<+\infty$, where $T_1>T$. Let $t_0\ge T$.
The following two statements are equivalent:\\
$(1)$ $G(t_0)=0$;\\
$(2)$ $f_{z_0}\in
\mathcal{O} (K_M)_{z_0} \otimes J_{z_0}$, for any  $ z_0\in Z_0$.
\end{Lemma}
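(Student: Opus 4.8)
I would prove the two implications separately, the first being immediate and the second carrying all the content.

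For $(2)\Rightarrow(1)$, the plan is to test the infimum defining $G(t_0)$ with the competitor $\tilde f\equiv 0$. Then $(\tilde f-f)_{z_0}=-f_{z_0}$, which lies in $\mathcal O(K_M)_{z_0}\otimes J_{z_0}$ for every $z_0\in Z_0$ by $(2)$ (a module containing $f_{z_0}$ contains its negative); hence $\tilde f\equiv 0$ is admissible and $G(t_0)\le\int_{\{\Psi<-t_0\}}|0|^2e^{-\varphi}c(-\Psi)=0$. As the integrand is nonnegative, $G(t_0)\ge 0$, so $G(t_0)=0$.

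For $(1)\Rightarrow(2)$ I would argue directly with a minimizing sequence, avoiding a separate existence-of-minimizer step. Pick admissible forms $\tilde f_j\in H^0(\{\Psi<-t_0\},\mathcal O(K_M))$, i.e. $(\tilde f_j-f)_{z_0}\in\mathcal O(K_M)_{z_0}\otimes J_{z_0}$ for all $z_0\in Z_0$, with $\int_{\{\Psi<-t_0\}}|\tilde f_j|^2e^{-\varphi}c(-\Psi)\to G(t_0)=0$. The first step is to upgrade this $L^2$ decay to local uniform decay. Since $c\in\tilde P_{T,M,\Psi}$, there is a closed set $E_0\subset Z\cap\{\Psi=-\infty\}$ so that $e^{-\varphi}c(-\Psi)$ is bounded below by a positive constant on $K\cap\{\Psi<-t_0\}$ for each compact $K\subset M\setminus E_0$ (if $t_0=T$ one first replaces $t_0$ by any $t_0'>T$, which does not affect the germs at the points of $Z_0$). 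Hence $\int_K|\tilde f_j|^2\,dV_M\to 0$ for such $K$, and Lemma \ref{module bounded by norm} together with Remark \ref{derivatives bounded by norm} gives $\tilde f_j\to 0$ uniformly on compact subsets of $\{\Psi<-t_0\}\setminus E_0$. Because each $\tilde f_j$ is in fact holomorphic on all of $\{\Psi<-t_0\}$ (which contains $E_0\subset\{\Psi=-\infty\}$) and $E_0$, lying in the analytic set $Z$, is nowhere dense, the maximum principle applied along complex slices transverse to $E_0$ propagates the convergence across $E_0$; thus $\tilde f_j\to 0$ uniformly on every compact subset of $\{\Psi<-t_0\}$, and in particular $(\tilde f_j)_{z_0}\to 0$ in $J(\Psi)_{z_0}$ for every $z_0\in Z_0$.

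It then remains to pass the membership to the limit. Writing $f_{z_0}=(f-\tilde f_j)_{z_0}+(\tilde f_j)_{z_0}$, the first term lies in $\mathcal O(K_M)_{z_0}\otimes J_{z_0}$ by admissibility and the second tends to $0$, so $f_{z_0}$ lies in the closure of $\mathcal O(K_M)_{z_0}\otimes J_{z_0}$ for local uniform convergence of representatives. I expect the main obstacle to be exactly the closedness of this module. At an interior point $z_0\in\cap_{t>T}\{\Psi<-t\}$, where $J(\Psi)_{z_0}=\mathcal O_{M,z_0}$ and $\mathcal O(K_M)_{z_0}\otimes J_{z_0}$ is a submodule of a finitely generated module over the Noetherian local ring $\mathcal O_{M,z_0}$, closedness in the topology of coefficientwise (equivalently, local uniform) convergence is classical. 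At a genuine boundary point $J(\Psi)_{z_0}$ is not Noetherian, and I would instead invoke the closedness of the boundary modules $\mathcal O(K_M)_{z_0}\otimes J_{z_0}$ with $I(\Psi+\varphi)_{z_0}\subset J_{z_0}$ underlying \cite{GMY-boundary3}, which rests on $I(\Psi+\varphi)_{z_0}$ having finite colength in $J(\Psi)_{z_0}$. Granting this closedness, $f_{z_0}\in\mathcal O(K_M)_{z_0}\otimes J_{z_0}$ for every $z_0\in Z_0$, which is $(2)$.
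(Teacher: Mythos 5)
The paper itself gives no proof of this lemma---it is quoted from \cite{GMY-boundary3}---so your proposal can only be measured against the argument of that series, and in outline it matches it. The direction $(2)\Rightarrow(1)$ via the competitor $\tilde f\equiv 0$ is exactly right. For $(1)\Rightarrow(2)$, your skeleton is the standard one: a minimizing sequence $\tilde f_j$ with weighted $L^2$ norms tending to $0$; the positive lower bound of $e^{-\varphi}c(-\Psi)$ on $K\cap\{\Psi<-t_0'\}$ for compact $K\subset M\setminus E_0$ (your reduction of $t_0=T$ to $t_0'>T$ is legitimate, since $G$ is nonincreasing and the germ conditions at $Z_0$ only involve $\{\Psi<-t\}$ for $t\gg T$), giving $\int_K|\tilde f_j|^2dV_M\to0$ and hence, by the mean-value estimate of Lemma \ref{module bounded by norm}, locally uniform convergence to $0$ off $E_0$; propagation across $E_0\subset Z\cap\{\Psi=-\infty\}$ by the maximum principle on polydiscs whose distinguished boundaries avoid the analytic set $Z$ (note the whole sequence converges, so no Montel extraction is needed); and finally closedness of the constraint module under compact convergence of representatives, which you correctly identify as the crux.

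The one genuinely flawed point is your stated reason for that closedness at boundary points: it does not rest on $I(\varphi+\Psi)_{z_0}$ having finite colength in $J(\Psi)_{z_0}$, and that colength is in general infinite---already at an interior point one has $J(\Psi)_{z_0}=\mathcal O_{M,z_0}$ while $I(\varphi+\Psi)_{z_0}$ can be, e.g., $(z_1^k)\subset\mathcal O_{\mathbb C^n,0}$ with $n\ge2$, whose quotient is infinite dimensional. The actual mechanism in the cited series is the reduction recalled in this paper as Proposition \ref{module isomorphism}: the $\mathcal O_{M,p}$-module isomorphism $P:H_p/I(\varphi+\Psi_1)_p\to \mathcal H_p/\mathcal I(\varphi+\varphi_1+\Psi_1)_p$ transports membership questions for germs of finite weighted norm (which the $(f-\tilde f_j)_{z_0}$ and $(\tilde f_j)_{z_0}$ are) to questions about ideals of the Noetherian local ring $\mathcal O_{M,p}$, where closedness under uniform convergence is precisely the Grauert--Remmert statement, Lemma \ref{closedness}---the same lemma you invoke at interior points. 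Since you explicitly black-box the closedness and attribute it to \cite{GMY-boundary3} (defensible here, as the lemma under review is itself quoted from that paper), your proof stands modulo that citation; only the parenthetical finite-colength justification is wrong and should be replaced by the reduction via Proposition \ref{module isomorphism}.
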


\subsection{Properties of $\mathcal{O}_{M,p}$-module $J_{p}$}
\label{sec:properties of module}
In this section, we present some properties of $\mathcal{O}_{M,p}$-module $J_{p}$. The notation used in this section can be referred to Section \ref{sec:Main result}.

Let $T\in [-\infty,+\infty)$. Denote $$\Psi:=\min\{\pi_1^*(\psi-2\log|F|),-T\}.$$
If $F(z)=0$ for some $z \in \Omega$, we set $\Psi(z,w)=-T$ for any $w\in Y$. Let $T_1>T$ be any real number.

Denote
$$\varphi_1:=2\max\{\pi_1^*(\psi)+T_1,\pi_1^*(2\log|F|)\},$$
and
$$\Psi_1:=\min\{\Psi,-T_1\}.$$
Denote $\varphi:=\pi_1^*(\varphi_{\Omega})+\pi_2^*(\varphi_Y)$. By definition we have $I(\Psi_1+\varphi)_p=I(\Psi+\varphi)_p$, for any $p\in M$. Recall that $\varphi_{\Omega}+\psi$ and $\psi$ are subharmonic functions on $\Omega$, hence for any relatively compact subset $U\Subset \Omega$, there exists a real number $a_1>0$ such that $e^{a_1\varphi_{\Omega}}$  is integrable on $U$. Note that $\varphi_Y$ is a plurisubharmonic function on $Y$ and then for any relatively compact set $W\Subset Y$, there exists a real number $a_2>0$ such that $e^{a_2\varphi_Y}$ is integrable on $W$.

Let $c(t)$ be a positive measurable function on $(T,+\infty)$ such that $c(t)\in\tilde{P}_{T,M,\Psi}$.
Let $dV_M$ be a continuous volume form on $M$. Let $p\in M$ be a point, denote that $H_p:=\{f_p\in J(\Psi)_p:\int_{\{\Psi<-t\}\cap V_0}|f|^2e^{-\varphi}c(-\Psi)dV_M<+\infty$  for some $t>T$  and  $V_0$  is an open neighborhood of $p\}$ and
$\mathcal{H}_p:=\{(F,p)\in \mathcal{O}_{M,p}:\int_{U_0}|F|^2e^{-\varphi-\varphi_1}c(-\Psi_1)dV_M<+\infty \text{ for some open neighborhood} \ U_0 \text{ of } p\}$.

As  $c(t)\in\tilde{P}_{T,M,\Psi}$, hence $c(t)e^{-t}$ is decreasing with respect to $t$ and we have $I(\Psi_1+\varphi)_p=I(\Psi+\varphi)_p\subset H_p$.
We also note that $\mathcal{H}_p$ is an ideal of $\mathcal{O}_{M,p}$.

In \cite{GMY-boundary3}, we proved the following proposition of $H_p/I(\varphi+\Psi_1)_p$.

\begin{Proposition}[\cite{GMY-boundary3}]
\label{module isomorphism}There exists an $\mathcal{O}_{M,p}$-module isomorphism $P:H_p/I(\varphi+\Psi_1)_p\to \mathcal{H}_p/\mathcal{I}(\varphi+\varphi_1+\Psi_1)_p$.
\end{Proposition}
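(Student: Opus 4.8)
The plan is to realize $P$ concretely through multiplication and division by $F^2$, the whole point being that the auxiliary weight $\varphi_1$ and the truncation $\Psi_1$ are engineered so that $F^2$ matches the two weighted structures on the sublevel set. First I would record the key weight identity on $\{\Psi<-T_1\}$: there $\pi_1^*(\psi)+T_1<\pi_1^*(2\log|F|)$, so $\varphi_1=\pi_1^*(4\log|F|)$, i.e. $e^{-\varphi_1}=|F|^{-4}$, and $\Psi_1=\Psi$. Hence $|F^2|^2e^{-\varphi_1}\equiv 1$ on $\{\Psi<-T_1\}$, so multiplication by $F^2$ carries the integrand $e^{-\varphi}c(-\Psi)$ defining $H_p$ exactly to $e^{-\varphi-\varphi_1}c(-\Psi_1)$ defining $\mathcal{H}_p$, and carries $e^{-\varphi-\Psi_1}$ (defining $I(\varphi+\Psi_1)_p$) exactly to $e^{-\varphi-\varphi_1-\Psi_1}$ (defining $\mathcal{I}(\varphi+\varphi_1+\Psi_1)_p$). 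This identity is the algebraic core of the statement.

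Second, I would construct the inverse map $Q:\mathcal{H}_p/\mathcal{I}(\varphi+\varphi_1+\Psi_1)_p\to H_p/I(\varphi+\Psi_1)_p$ by $Q([g])=[g/F^2]$, which is unconditional because $F$ is zero-free on $\{\tilde\Psi<-T\}$ (if $F(z)=0$ then $\psi-2\log|F|=+\infty\ge -T$, so $z\notin\{\tilde\Psi<-T\}$); thus $g/F^2$ is a genuine holomorphic function on $\{\Psi<-t\}\cap V_0$ and lies in $J(\Psi)_p$. Well-definedness of $Q$ into $H_p$ and the inclusion $Q(\mathcal{I})\subset I$ follow at once from the weight identity with $t=T_1$. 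Injectivity is equally routine: if $g/F^2\in I(\varphi+\Psi_1)_p$, the weight identity gives finiteness of $\int|g|^2e^{-\varphi-\varphi_1-\Psi_1}$ over $\{\Psi<-T_1\}$, while over $\{\Psi\ge -T_1\}$ one has $\Psi_1\equiv -T_1$ and $c(-\Psi_1)\equiv c(T_1)>0$, so the missing part of the $\mathcal{I}$-integral is a fixed multiple of the $\mathcal{H}_p$-integral, which is finite by $g\in\mathcal{H}_p$; hence $g\in\mathcal{I}(\varphi+\varphi_1+\Psi_1)_p$. All maps are $\mathcal{O}_{M,p}$-linear because multiplying or dividing by $F^2$ commutes with multiplication by holomorphic germs.

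Third, and this is the main obstacle, I would prove $Q$ is surjective (equivalently that $P:=Q^{-1}$ exists), which requires extending $F^2f$ from the sublevel set to an honest germ of $\mathcal{H}_p$ modulo $\mathcal{I}$. Since $\Psi=\pi_1^*\tilde\Psi$ and $\varphi_1$ is a $\pi_1$-pullback, near $p=(z_0,y_0)$ the complement $\{\Psi\ge -t\}$ is $(\{\tilde\Psi\ge -t\}\cap V_{z_0})\times V_{y_0}$, so the extension problem is one-dimensional in the $\Omega$-factor; fixing a coordinate disk $\Delta\ni z_0$, I would expand $F^2f$ by Lemma \ref{decomp} (and its integrable refinement Lemma \ref{decomp integrable}) in the basis $\pi_1^*(w^j\,dw)\wedge\pi_2^*(\cdot)$, reducing the claim to the statement that the $L^2$ bound coming from $f\in H_p$ forces the Laurent tail in $w$ to have no negative powers, so that each coefficient form extends holomorphically across $w=0$. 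When $\tilde\Psi\to -\infty$ at $z_0$ (Lelong number $>2\,\mathrm{ord}_{z_0}(F)$), $\{\tilde\Psi\ge -t\}$ shrinks to $\{z_0\}$, so $F^2f$ is an $L^2$ holomorphic form off the codimension one analytic set $\{z_0\}\times Y$ and extends by the negligibility (removable singularity) property built into condition $(A)$; the remaining configurations are controlled through the local trichotomy recorded by $\tilde Z_1,\tilde Z_2,\tilde Z_3$, and carrying out this Riemann surface analysis uniformly is the delicate point.

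Finally, having shown $Q$ is a bijective $\mathcal{O}_{M,p}$-module homomorphism, I would conclude that $P=Q^{-1}:H_p/I(\varphi+\Psi_1)_p\to\mathcal{H}_p/\mathcal{I}(\varphi+\varphi_1+\Psi_1)_p$ is the desired isomorphism, with $P([f])=[F^2\tilde f]$ for the extension $\tilde f$ of $f$, the maps $P$ and $Q$ being mutually inverse since $F^2\cdot(g/F^2)=g$ and $(F^2\tilde f)/F^2=f$ on the common sublevel set.
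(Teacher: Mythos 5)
The paper does not actually prove Proposition \ref{module isomorphism}: it is quoted verbatim from \cite{GMY-boundary3} ("In \cite{GMY-boundary3}, we proved the following proposition"), so there is no in-text argument to compare against. Judged on its own, the first half of your proposal is sound and is indeed the algebraic core of the statement: on $\{\Psi<-T_1\}$ one has $\varphi_1=\pi_1^*(4\log|F|)$ and $\Psi_1=\Psi$, so $|F^2|^2e^{-\varphi_1}\equiv1$ there, $F$ is zero-free on $\{\tilde\Psi<-T\}$, and the map $Q([g])=[g/F^2]$ is a well-defined, injective $\mathcal{O}_{M,p}$-homomorphism from $\mathcal{H}_p/\mathcal{I}(\varphi+\varphi_1+\Psi_1)_p$ to $H_p/I(\varphi+\Psi_1)_p$ (your injectivity computation should also treat the intermediate region $\{-t\le\Psi<-T_1\}$, not only $\{\Psi\ge-T_1\}$, but that is routine since $c$ is bounded below by $c(t)e^{s-t}$ on $[T_1,t]$).

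The genuine gap is surjectivity, which you yourself flag as "the delicate point" and then only gesture at. The mechanism you propose --- expand $F^2f$ by Lemma \ref{decomp}/\ref{decomp integrable} and argue that the $L^2$ bound kills the negative Laurent powers --- is not available in the cases that matter. Lemma \ref{decomp} requires the domain to be a full product $\Delta\times Y$; here $F^2f$ lives only on $(\{\tilde\Psi<-t\}\cap V_{z_0})\times Y$, and when $v(dd^c\psi,z_0)\le 2\,\mathrm{ord}_{z_0}(F)$ (the configurations $\tilde Z_1\setminus\tilde Z_3$ and $\tilde Z_2$, precisely those for which Lemmas \ref{lm:trivial module 1} and \ref{lm:trivial module 2} are later needed, and whose proofs already presuppose this Proposition) the set $\{\tilde\Psi<-t\}$ need not contain any punctured neighborhood of $z_0$ --- it can be, say, a union of tiny disks accumulating at $z_0$ --- so no power-series or Laurent expansion of $F^2f$ around $z_0$ exists, and a holomorphic function on such a set certainly need not extend to a germ at $p$. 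Even in the $\tilde Z_3$ case your removable-singularity step needs $F^2f$ to be Lebesgue-$L^2$ near $\{z_0\}\times Y$, which requires the positive lower bound of $e^{-\varphi}c(-\Psi)$ that is only assumed on compact sets away from $\pi_1^{-1}(E)$ (and your appeal to "condition $(A)$" conflates the hypothesis of Theorem \ref{thm:fibra-finite} with condition $(A)$ for triples in Section 2.2). What the statement actually asserts is the weaker but still nontrivial claim that some germ $g\in\mathcal{H}_p$ satisfies $g-F^2f\in F^2\cdot I(\varphi+\Psi_1)_p$, i.e.\ an extension \emph{modulo the ideal}; producing such a $g$ is done in the cited reference by the Ohsawa--Takegoshi/$\bar\partial$-equation machinery underlying Theorem \ref{thm:concavity} (solving $\bar\partial(b(\Psi)F^2f)=b'(\Psi)\bar\partial\Psi\wedge F^2f$ on a Stein neighborhood of $p$ with the weight $\varphi+\varphi_1+\Psi$), not by a fiberwise expansion. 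Without that input your argument does not close.
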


We recall the following  closedness property of submodule of $\mathcal O_{\mathbb C^n,o}^q$.

\begin{Lemma}[see \cite{G-R}]
\label{closedness}
Let $N$ be a submodule of $\mathcal O_{\mathbb C^n,o}^q$, $1\leq q<+\infty$, let $f_j\in\mathcal O_{\mathbb C^n}(U)^q$ be a sequence of $q-$tuples holomorphic in an open neighborhood $U$ of the origin $o$. Assume that the $f_j$ converge uniformly in $U$ towards  a $q-$tuples $f\in\mathcal O_{\mathbb C^n}(U)^q$, assume furthermore that all germs $(f_{j},o)$ belong to $N$. Then $(f,o)\in N$.	
\end{Lemma}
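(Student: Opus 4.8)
The plan is to combine the Noetherian/Artin--Rees algebra of the local ring $\mathcal{O}:=\mathcal{O}_{\mathbb{C}^n,o}$ with the elementary fact that uniform convergence of holomorphic maps forces convergence of all Taylor coefficients at $o$. Write $\mathfrak{m}\subset\mathcal{O}$ for the maximal ideal of germs vanishing at $o$, and set $M:=\mathcal{O}^q$, so that $N\subseteq M$ is the given submodule. Since $\mathcal{O}$ is Noetherian (R\"uckert basis theorem) and $M$ is finitely generated over $\mathcal{O}$, the submodule $N$ is finitely generated and, crucially, closed in the $\mathfrak{m}$-adic topology: by the Artin--Rees lemma one has the identity
\begin{equation}\label{eq:AR}
\bigcap_{k\ge 1}\bigl(N+\mathfrak{m}^k M\bigr)=N.
\end{equation}
This equation \eqref{eq:AR} will be the algebraic engine of the proof; everything else is designed to verify that the germ $(f,o)$ lies in the left-hand side.

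First I would record the analytic input. Uniform convergence $f_j\to f$ on $U$ gives uniform convergence on a closed polydisc $\overline{\Delta}\subset U$ centered at $o$; by the Cauchy estimates every Taylor coefficient converges, i.e.\ $\partial^\alpha f_j(o)\to\partial^\alpha f(o)$ for each multi-index $\alpha$. Next I would pass to finite jets. For fixed $k\ge1$ the quotient $M/\mathfrak{m}^k M\cong(\mathcal{O}/\mathfrak{m}^k)^q$ is a finite-dimensional complex vector space, canonically identified with the space of $q$-tuples of Taylor polynomials of degree $<k$, and the reduction map $M\to M/\mathfrak{m}^k M$ sends a germ to its $(k-1)$-jet at $o$. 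By the previous sentence this map is continuous with respect to uniform convergence, so the images $[f_j]$ converge to $[f]$ in $M/\mathfrak{m}^k M$.

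The final step is to exploit finite-dimensionality. The image $\overline{N}_k:=(N+\mathfrak{m}^k M)/\mathfrak{m}^k M$ of $N$ is a $\mathbb{C}$-linear subspace of the finite-dimensional space $M/\mathfrak{m}^k M$, hence closed in the Euclidean topology. Each $(f_j,o)$ belongs to $N$, so $[f_j]\in\overline{N}_k$; letting $j\to\infty$ and using that $\overline{N}_k$ is closed gives $[f]\in\overline{N}_k$, that is,
\begin{equation*}
(f,o)\in N+\mathfrak{m}^k M.
\end{equation*}
Since $k\ge1$ was arbitrary, $(f,o)$ lies in the intersection on the left of \eqref{eq:AR}, and therefore $(f,o)\in N$, as desired.

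I expect the main obstacle to be the algebraic closedness \eqref{eq:AR}: it rests on the Noetherianity of $\mathcal{O}_{\mathbb{C}^n,o}$ together with the Artin--Rees lemma, equivalently the Krull intersection theorem applied to the finitely generated $\mathcal{O}$-module $M/N$, which yields $\bigcap_{k\ge1}\mathfrak{m}^k(M/N)=0$. The analytic and finite-dimensional steps are routine once this is in place; the only point requiring care is the clean identification of $M/\mathfrak{m}^k M$ with finite jets, so that the reduction map is manifestly continuous for uniform convergence.
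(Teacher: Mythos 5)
Your proof is correct, and since the paper states Lemma \ref{closedness} only as a recalled result with a citation to \cite{G-R} rather than proving it, the right comparison is with the classical argument there: your route---Krull's intersection theorem $\bigcap_{k\ge1}\bigl(N+\mathfrak{m}^kM\bigr)=N$ for the Noetherian local ring $\mathcal{O}_{\mathbb{C}^n,o}$, combined with Cauchy estimates to get convergence of jets in the finite-dimensional spaces $M/\mathfrak{m}^kM$, whose subspaces $(N+\mathfrak{m}^kM)/\mathfrak{m}^kM$ are automatically closed---is essentially that standard proof. All the steps check out (Noetherianity via the R\"uckert basis theorem, the identification of $\mathcal{O}/\mathfrak{m}^k$ with Taylor polynomials of degree $<k$, and closedness of linear subspaces in finite dimensions), so there is nothing to repair.
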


We recall the following well-known result due to Skoda.
\begin{Lemma}[\cite{skoda1972}\label{l:skoda}]
	Let $u$ be a plurisubharmonic function on $\Delta^n\subset\mathbb{C}^n$. If $v(dd^cu,o )<1$, then $e^{-2u}$ is $L^1$ on a neighborhood of $o$, where $o\in\Delta^n$ is the origin.
\end{Lemma}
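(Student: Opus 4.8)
The plan is to recognize this as Skoda's classical integrability theorem \cite{skoda1972} and to prove the sharper quantitative statement: writing $\nu:=v(dd^cu,o)$ for the Lelong number, $e^{-cu}$ is locally integrable at $o$ whenever $c\,\nu<2$. The lemma is then the case $c=2$, since the hypothesis $\nu<1$ gives exactly $2\nu<2$. Throughout I would normalize $u\le 0$ near $o$ (subtracting a constant, using that a plurisubharmonic function is locally bounded above), and recall that with the normalization $d^c=\frac{\partial-\bar\partial}{2\pi\sqrt{-1}}$ one has $dd^c\log|z|=\delta_o$, so that $\nu$ is precisely the atomic mass of the positive measure/current $dd^cu$ at $o$.

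First I would treat the model one-dimensional case $n=1$, where the argument is self-contained. By the Riesz decomposition, on a disk $\Delta'\Subset\Delta$ one writes $u(z)=\int\log|z-\zeta|\,d\mu(\zeta)+h(z)$, where $\mu=dd^cu\ge 0$ is the Riesz measure, $h$ is harmonic (hence bounded near $o$), and $\mu(\{o\})=\nu$. Given $\varepsilon>0$, I would choose $\rho>0$ so small that $\mu_1:=\mu|_{B(o,\rho)}$ has total mass $m_1=\|\mu_1\|<\nu+\varepsilon$, which is possible because $o$ is the only atom of $\mu$ that $\mu_1$ retains for small $\rho$. Splitting $\mu=\mu_1+\mu_2$, for $z\in B(o,\rho/2)$ the contributions of $\mu_2$ (supported away from $o$) and of $h$ are bounded, so $e^{-cu(z)}\le C\exp\bigl(-c\int\log|z-\zeta|\,d\mu_1(\zeta)\bigr)$. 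Applying Jensen's inequality to the probability measure $d\mu_1/m_1$, the convex function $\exp$, and the random variable $Z=-cm_1\log|z-\zeta|$ gives
\[
\exp\Bigl(-c\int\log|z-\zeta|\,d\mu_1(\zeta)\Bigr)\le\frac{1}{m_1}\int|z-\zeta|^{-cm_1}\,d\mu_1(\zeta).
\]
Integrating over $z\in B(o,\rho/2)$ and applying Fubini, the inner integral $\int_{B(o,\rho/2)}|z-\zeta|^{-cm_1}\,d\lambda(z)$ is finite and uniformly bounded in $\zeta$ (by $\int_{B(0,2\rho)}|w|^{-cm_1}\,d\lambda(w)$) as soon as $cm_1<2$, i.e. $c(\nu+\varepsilon)<2$. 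Since $\varepsilon$ is arbitrary this yields $e^{-cu}\in L^1$ near $o$ for every $c<2/\nu$, and in particular $e^{-2u}\in L^1$ when $\nu<1$.

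For the general case $n\ge 2$ the Riesz potential representation is no longer available, and this is where the main difficulty lies. I would reduce the problem to a single estimate by the layer-cake formula $\int_{B}e^{-2u}\,d\lambda=\int_{-\infty}^{+\infty}2e^{2t}\lambda(\{u<-t\}\cap B)\,dt$: the integral converges provided one has a sublevel-set volume bound of the form $\lambda(\{u<-t\}\cap B(o,\rho))\lesssim e^{-2t/(\nu+\varepsilon)}$ for $t$ large, since the resulting integrand behaves like $e^{2t(1-1/(\nu+\varepsilon))}$, which is integrable at $+\infty$ exactly when $\nu+\varepsilon<1$. The hard part is to establish this volume bound, controlling how negative $u$ can be on a fixed small ball in terms of the infinitesimal invariant $\nu$; the mean-value inequality for plurisubharmonic functions gives only upper bounds on spherical averages of $u$ (the wrong direction for $e^{-2u}$), so the estimate must instead be extracted from the fact that the trace mass of $dd^cu$ on $B(o,\rho)$ tends to $\nu$ as $\rho\to 0$, or, equivalently, by restricting $u$ to complex lines $\ell_a=\mathbb{C}a$ through $o$ (where the one-dimensional argument applies with $\nu(u|_{\ell_a},o)=\nu$ for generic $a$) and integrating over $a\in\mathbb{P}^{n-1}$ in polar coordinates. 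In the latter approach the genuine obstacle is that a pluripolar set of exceptional directions, along which the sliced Lelong number exceeds $\nu$, can still interact with the radial integration, so one needs a uniform rather than an almost-everywhere version of the slicing estimate. This uniform control is precisely the technical core established in Skoda's original work, and for the purposes of this article one may simply invoke \cite{skoda1972}.
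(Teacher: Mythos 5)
The paper itself offers no proof of this lemma: it is recalled as a classical result and simply cited to \cite{skoda1972}, so your decision to delegate the technical core in dimension $n\ge 2$ to Skoda's original paper puts you exactly in line with what the authors do. What you add beyond the paper is a complete and correct elementary proof in dimension one: the Riesz decomposition $u=\int\log|z-\zeta|\,d\mu(\zeta)+h$ on a relatively compact subdisk, the choice of $\rho$ with $m_1=\mu(B(o,\rho))<\nu+\varepsilon$ (valid since $\mu(B(o,\rho))$ decreases to $\mu(\{o\})=\nu$ as $\rho\to 0$, with the normalization $dd^c\log|z|=\delta_o$ making $\nu$ the atomic mass), Jensen's inequality for the probability measure $d\mu_1/m_1$ applied to the convex function $\exp$, and Fubini against the integrals $\int_{B(0,2\rho)}|w|^{-cm_1}\,d\lambda(w)$, which are finite and uniform in $\zeta$ precisely when $cm_1<2$; each step is sound, and the degenerate case $m_1=0$ is trivial since the local potential then vanishes. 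Your diagnosis of the genuine obstruction for $n\ge 2$ (the mean-value inequality bounds spherical averages from the wrong side; the slicing argument over lines through $o$ is threatened by an exceptional set of directions where the sliced Lelong number jumps, so a uniform rather than almost-everywhere estimate is needed) is accurate, and invoking \cite{skoda1972} at that point is legitimate. It is worth observing that every application of this lemma in the present paper --- Lemma \ref{l:1d-MIS} and the integrability of $e^{-\frac{q}{p}\tilde\psi}$ in the proof of Lemma \ref{lm:trivial module 1} --- concerns subharmonic functions of a single variable, so your self-contained one-dimensional argument, in its sharp quantitative form $e^{-cu}\in L^1$ for $c\nu<2$, already covers everything the paper actually uses; the full $n$-dimensional statement is needed only as quoted and is correctly attributed to Skoda.
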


 Following from Lemma \ref{l:skoda} and Siu's Decomposition Theorem, we can obtain the following well-known result (see \cite{GMY-boundary3}).
\begin{Lemma}
	\label{l:1d-MIS}Let $\varphi$ be a subharmonic function on the unit disc $\Delta\subset\mathbb{C}$. Then we have $\mathcal{I}(\varphi)_o=(z^k)_o$ if and only if $v(dd^c(\varphi),o)\in[2k,2k+2)$.
\end{Lemma}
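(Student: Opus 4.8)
The plan is to reduce the statement to the elementary integrability of a power of $|z|$ by stripping off the logarithmic singularity of $\varphi$ at $o$, and then to control the remaining subharmonic part by Skoda's estimate (Lemma \ref{l:skoda}). Write $\gamma:=v(dd^c(\varphi),o)$ for the Lelong number, which is finite since $dd^c(\varphi)$ is a positive measure of locally finite mass. As $\mathcal{O}_{\mathbb{C},o}$ is a discrete valuation ring, every nonzero ideal is $(z^k)_o$, so $\mathcal{I}(\varphi)_o=(z^k)_o$ is equivalent to the pair of conditions $z^k\in\mathcal{I}(\varphi)_o$ and $z^{k-1}\notin\mathcal{I}(\varphi)_o$ (the second being vacuous when $k=0$). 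Hence it suffices to prove, for every integer $m\ge0$, the equivalence
$$z^m\in\mathcal{I}(\varphi)_o\iff\gamma<2m+2,$$
that is, $\int_{\Delta'}|z|^{2m}e^{-\varphi}\,d\lambda<+\infty$ on some small disc $\Delta'\ni o$ if and only if $\gamma<2m+2$. Granting this, $k=\min\{m\in\mathbb{Z}_{\ge0}:\gamma<2m+2\}$, and a direct check (using that Lelong numbers are nonnegative to handle the case $k=0$) shows that both $z^k\in\mathcal{I}(\varphi)_o$ and $z^{k-1}\notin\mathcal{I}(\varphi)_o$ hold precisely when $\gamma\in[2k,2k+2)$.

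First I would apply Siu's Decomposition Theorem in its one-dimensional form: since $dd^c(\varphi)$ is a positive measure, I isolate its atom at the origin as $dd^c(\varphi)=\gamma\,\delta_o+\nu$ with $\nu(\{o\})=0$, and using $dd^c(\log|z|)=\delta_o$ I set
$$h:=\varphi-\gamma\log|z|.$$
Then $h$ is subharmonic near $o$ with $dd^c(h)=\nu\ge0$ and, crucially, $v(dd^c(h),o)=0$. With this decomposition $|z|^{2m}e^{-\varphi}=|z|^{2m-\gamma}e^{-h}$, so the whole problem reduces to analysing $\int|z|^{2m-\gamma}e^{-h}\,d\lambda$.

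For the direction $\gamma<2m+2$ (so $2m-\gamma>-2$), the key is to absorb the factor $e^{-h}$. Since $v(dd^c(h),o)=0$, for any $s>0$ the function $u:=\tfrac{s}{2}h$ satisfies $v(dd^c(u),o)=\tfrac{s}{2}v(dd^c(h),o)=0<1$, so Lemma \ref{l:skoda} gives that $e^{-sh}=e^{-2u}$ is $L^1$ near $o$ for \emph{every} $s>0$. I would then pick $p>1$ close enough to $1$ that $(2m-\gamma)p>-2$ (possible exactly because $2m-\gamma>-2$) and apply H\"older's inequality with conjugate exponent $q$:
$$\int_{\Delta'}|z|^{2m-\gamma}e^{-h}\,d\lambda\le\Big(\int_{\Delta'}|z|^{(2m-\gamma)p}\,d\lambda\Big)^{1/p}\Big(\int_{\Delta'}e^{-qh}\,d\lambda\Big)^{1/q}<+\infty,$$
the first factor being finite since $(2m-\gamma)p>-2$ and the second by the Skoda estimate. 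For the converse, I would use that a subharmonic $h$ is locally bounded above, say $h\le C$ near $o$, so that $e^{-h}\ge e^{-C}$ and $\int|z|^{2m-\gamma}e^{-h}\,d\lambda\ge e^{-C}\int|z|^{2m-\gamma}\,d\lambda$; if $\gamma\ge2m+2$ then $2m-\gamma\le-2$ and the right-hand integral diverges, forcing $z^m\notin\mathcal{I}(\varphi)_o$. This proves the displayed equivalence and hence the lemma.

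The only genuinely delicate step I anticipate is the treatment of the remainder $h$: one needs not merely that $e^{-h}$ is integrable but that $e^{-sh}$ is integrable for \emph{all} $s>0$, so that the H\"older exponent can be taken arbitrarily close to $1$ and no room is lost at the threshold $\gamma=2m+2$. This is precisely what the vanishing of the Lelong number of $h$ furnishes through Skoda's lemma, and together with the passage from $dd^c(\varphi)$ to the subharmonicity of $h$ via Siu's decomposition, it is the point I would justify most carefully.
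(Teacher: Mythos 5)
Your argument is correct and follows exactly the route the paper indicates, namely Siu's decomposition $\varphi=\gamma\log|z|+h$ with $v(dd^c h,o)=0$ combined with Skoda's integrability criterion (Lemma \ref{l:skoda}) and a H\"older exponent taken close to $1$; the paper itself only cites the lemma as a known consequence of these two ingredients without writing out the details. Your reduction to the single equivalence $z^m\in\mathcal{I}(\varphi)_o\iff v(dd^c\varphi,o)<2m+2$ and the two-sided estimates are complete and correct.
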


Recall that $\tilde{Z}_0\subset \Omega$ is a subset of $\cap_{t>T} \overline{\{(\pi_1)_*(\Psi)<-t\}}$.
Recall that $\tilde{Z}_1:=\{z\in \tilde{Z}_0:v(dd^c(\psi),z)\ge2ord_{z}(F)\}$, $\tilde{Z}_2:=\{z\in \tilde{Z}_0:v(dd^c(\psi),z)<2ord_{z}(F)\},$ and
 $\tilde{Z}_3:=\{z\in \tilde{Z}_0:v(dd^c(\psi),z)>2ord_{z}(F)\}$. We denote $Z_1:=\tilde{Z}_1\times Y$, $Z_2:=\tilde{Z}_2\times Y$ and $Z_3:=\tilde{Z}_3\times Y$ respectively.

\begin{Lemma}\label{lm:trivial module 1}Assume that $c(t)\ge 1$ near $+\infty$ and $v(dd^c(\psi),z_0)+v(dd^c(\psi+\varphi_{\Omega}),z_0)\notin \mathbb{Z}$ for any $z_0\in \tilde{Z}_2$. Then for any $p=(z,w)\in Z_2$, we have $H_{p}=I(\varphi+\Psi)_p$.
\end{Lemma}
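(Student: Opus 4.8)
The plan is to transport the statement to genuine germs via the module isomorphism of Proposition \ref{module isomorphism}, and then to identify the two resulting ideals of $\mathcal{O}_{M,p}$ by combining the product-of-Bergman-spaces formalism of Section 2.1 with Skoda's integrability criterion. Since $c\in\tilde{P}_{T,M,\Psi}$ forces $c(t)e^{-t}$ to be decreasing, on $\{\Psi<-t\}$ one has $c(-\Psi)\le c(t)e^{-t}e^{-\Psi}$, so the inclusion $I(\varphi+\Psi)_p=I(\varphi+\Psi_1)_p\subset H_p$ is automatic (as already recorded before Proposition \ref{module isomorphism}); thus it suffices to prove the reverse inclusion $H_p\subset I(\varphi+\Psi)_p$. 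Under the isomorphism $P$ this is equivalent to the equality of ideals $\mathcal{H}_p=\mathcal{I}(\varphi+\varphi_1+\Psi_1)_p$ in $\mathcal{O}_{M,p}$, of which $\mathcal{I}(\varphi+\varphi_1+\Psi_1)_p\subset\mathcal{H}_p$ is again automatic, so the whole content is the inclusion $\mathcal{H}_p\subset\mathcal{I}(\varphi+\varphi_1+\Psi_1)_p$.

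I would first make the weight explicit. A direct case analysis of the defining minimum and maximum gives $\varphi_1+\Psi_1=\psi+\max\{\psi+T_1,2\log|F|\}$ near $z$, so that $\varphi+\varphi_1+\Psi_1=\pi_2^*(\varphi_Y)+\pi_1^*\big[(\varphi_\Omega+\psi)+\max\{\psi+T_1,2\log|F|\}\big]$, where the bracketed term is subharmonic on $\Omega$. Using the linearity of the Lelong number together with $v(dd^c\max\{u_1,u_2\},z)=\min\{v(dd^cu_1,z),v(dd^cu_2,z)\}$ and the defining inequality $v(dd^c(\psi),z)<2\,\mathrm{ord}_z(F)$ of $\tilde{Z}_2$, the $\Omega$-direction Lelong number of the bracket equals $v(dd^c(\psi+\varphi_\Omega),z)+v(dd^c(\psi),z)$, precisely the quantity assumed to lie outside $\mathbb{Z}$. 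Lemma \ref{l:1d-MIS} (a consequence of Skoda's Lemma \ref{l:skoda}) then identifies the one-variable multiplier ideal attached to the bracket with $(\zeta^{k})_z$ for the integer $k$ determined by this Lelong number, $\zeta$ being a local coordinate on $\Omega$ centred at $z$.

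Next I would pass to a product neighborhood $V_z\times W\ni p$, with $V_z\subset\Omega$ a coordinate disk and $W\subset Y$ a coordinate ball, so that both weights split as $\pi_1^*(\,\cdot\,)\otimes e^{-\varphi_Y}$ (the factors $\varphi_1$, $\Psi_1$ and $c(-\Psi_1)$ being $\pi_1^*$ of functions on $\Omega$). Expanding a germ as $G=\sum_l\zeta^lG_l$ and invoking Lemma \ref{basis of product}, Lemma \ref{decomp integrable} and Remark \ref{remark of decomp integrable}, finiteness of the $L^2$ integral is controlled factor by factor: the admissible orders $l$ in $\zeta$ are governed by the one-dimensional $\Omega$-weight, while the coefficients $G_l$ are constrained by $\int_Y|G_l|^2e^{-\varphi_Y}<+\infty$. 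In this way $\mathcal{I}(\varphi+\varphi_1+\Psi_1)_p$ is identified with the submodule cut out by $(\zeta^{k})_z$ in the $\Omega$-variable together with the $Y$-side multiplier condition.

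The last and hardest step is to show that the $c$-weighted ideal $\mathcal{H}_p$ is no larger, i.e.\ that replacing $e^{-\Psi_1}$ by $c(-\Psi_1)$ does not enlarge the ideal. The difficulty is concentrated on the set $\{\Psi_1=\Psi<-T_1\}$, which near $z$ clusters around the poles of $\psi$ other than $z$ itself, and where $e^{-\Psi_1}$ may be far larger than $c(-\Psi_1)$, so that $c$-integrability is a genuinely weaker requirement. Here the hypothesis $c(t)\ge1$ near $+\infty$ is essential: since $-\Psi_1\ge T_1$ on this set, it gives $c(-\Psi_1)\ge1$, hence the lower bound $e^{-\varphi-\varphi_1}c(-\Psi_1)\ge e^{-\varphi-\varphi_1}$ there, which combined with the bulk identity $c(-\Psi_1)=c(T_1)$ on $\{\Psi_1=-T_1\}$ forces on any $G\in\mathcal{H}_p$ the same vanishing order in $\zeta$ at $z$. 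The non-integrality of the Lelong number then excludes the borderline value at which the two integrability conditions could produce different ideals, and guarantees that the same integer $k$ and the same $Y$-side condition govern $\mathcal{H}_p$. This yields $\mathcal{H}_p=\mathcal{I}(\varphi+\varphi_1+\Psi_1)_p$, and hence $H_p=I(\varphi+\Psi)_p$. I expect this comparison of the two weights on the sub-level ``islands'' near the remaining poles of $\psi$ to be the crux of the argument.
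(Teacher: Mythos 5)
Your overall architecture coincides with the paper's: reduce via Proposition \ref{module isomorphism} to the ideal identity $\mathcal{H}_p=\mathcal{I}(\varphi+\varphi_1+\Psi_1)_p$; compute $\varphi_1+\Psi_1=\pi_1^*\big(\psi+\max\{\psi+T_1,2\log|F|\}\big)$ and the Lelong number $x_1+x_2$ of the $\Omega$-direction weight, where $x_1=v(dd^c\psi,z)$ and $x_2=v(dd^c(\varphi_\Omega+\psi),z)$; split off the $Y$-direction by the Bergman-product lemmas of Section 2.1 (the paper also needs Lemma \ref{closedness} to reassemble the series $\sum_l\zeta^lG_l$ at the end); and use a positive lower bound for $c$ on $[T_1,+\infty)$ to pass from $c$-weighted to $e^{-\varphi-\varphi_1}$-weighted integrability. (Minor imprecision: $c\ge1$ near $+\infty$ alone does not give $c(-\Psi_1)\ge1$ on all of $\{\Psi_1<-T_1\}$; you also need the monotonicity of $c(t)e^{-t}$ to get a positive lower bound on the whole of $[T_1,+\infty)$, which is how the paper deduces \eqref{eq:0316a}.)

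The gap is in your last step, which you yourself flag as ``the crux'' but do not carry out. After the reduction you know, for the leading order $l$ of $G=\sum_l\zeta^lG_l$, that $|\zeta|^{2l}e^{-\varphi_\Omega-(\pi_1)_*(\varphi_1)}$ is integrable near $z$, and you must conclude $(\zeta^l,z)\in\mathcal{I}\big(\varphi_\Omega+\psi+\frac{1}{2}(\pi_1)_*(\varphi_1)\big)_z$, i.e.\ integrability against the weight $e^{-\varphi_\Omega-\psi-\frac{1}{2}(\pi_1)_*(\varphi_1)}=\max\{e^{T_1},|F|^2e^{-\psi}\}\cdot e^{-\varphi_\Omega-(\pi_1)_*(\varphi_1)}$, whose extra factor is unbounded on thin sets where $\psi$ is very negative. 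One cannot argue that the two weights have the same Lelong number and therefore the same ideal: $\varphi_\Omega+(\pi_1)_*(\varphi_1)$ is not known to be subharmonic, so Lemma \ref{l:1d-MIS} does not apply to it, and Lelong numbers do not determine multiplier ideals at the borderline in any case. The paper closes exactly this gap by an explicit computation: write $\psi=x_1\log|\zeta|+\tilde\psi$ (Siu decomposition, $v(dd^c\tilde\psi,z)=0$), use the bounds $\frac{1}{2}(\pi_1)_*(\varphi_1)\le x_1\log|\zeta|+C_1$ (valid precisely because $x_1<2\,ord_z(F)$) and $\varphi_\Omega+\psi\le x_2\log|\zeta|+C_2$ to obtain $\int|\zeta|^{2l-x_1-x_2}e^{\tilde\psi}<+\infty$, then apply H\"older against $e^{-\frac{q}{p}\tilde\psi}$ (integrable by Skoda's Lemma \ref{l:skoda}) to get $\int|\zeta|^{(2l-x_1-x_2)/p}<+\infty$ for every $p>1$, hence $2l-x_1-x_2\ge-2$; only at this point does $x_1+x_2\notin\mathbb{Z}$ upgrade this to the strict inequality $2l-x_1-x_2>-2$, and Lemma \ref{l:1d-MIS}, applied to the genuinely subharmonic weight $\varphi_\Omega+\psi+\frac{1}{2}(\pi_1)_*(\varphi_1)$, yields the desired membership. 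Without some version of this H\"older--Skoda step, your non-integrality hypothesis has nothing concrete to act on.
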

\begin{proof}
	It follows from Proposition \ref{module isomorphism} that  $H_{p} =I(\varphi+\Psi_1)_{p}$ if and only if $\mathcal{H}_{p}=\mathcal{I}(\varphi+\Psi_1+\varphi_1)_{p}$, where $\mathcal{H}_{p}:=\{(h,p)\in\mathcal{O}_{M,p}:|h|^2e^{-\varphi-\varphi_1}c(-\Psi_1)$ is integrable near $p\}$. Now, we prove $\mathcal{H}_{p}=\mathcal{I}(\varphi+\Psi_1+\varphi_1)_{p}$.

 Without loss of generality, we can assume that $M=\Delta \times \Delta^{n-1}$, $z=0\in \Delta$ and $p=o\in M$ (the origin of $M$). As $c(t)e^{-t}$ is decreasing, we have $\mathcal{I}(\varphi+\Psi_1+\varphi_1)_{o}\subset \mathcal{H}_{o}$. For any $(h,o)\in \mathcal{H}_o$, there exists $r_1>0$ such that $\int_{\Delta_{r_1}\times \Delta_{r_1}^{n-1}}|h|^2e^{-\varphi-\varphi_1}c(-\Psi_1)<+\infty$, which implies that
\begin{equation}
	\label{eq:0316a}\int_{\Delta_{r_1}\times \Delta_{r_1}^{n-1}}|h|^2e^{-\varphi-\varphi_1}<+\infty,
\end{equation}
where $\Delta_{r_1}=\{z\in\mathbb{C}:|z|<r_1\}$. Hence we know that $h \in A^2(\Delta_{r_1} \times \Delta^{n-1}_{r_1},e^{-\varphi-\varphi_1})$. It follows from Lemma \ref{decomp integrable} and Remark \ref{remark of decomp integrable} that we know that $h(z,w)=\sum_{l\ge k}z^lF_l(w)$, where $\{F_l\}_{l\ge k}$ is a sequence of holomorphic functions on $W$ satisfying $F_{k}(w)\not\equiv 0$, $F_l\in A^2(\Delta^{n-1}_{r_1},e^{-\varphi_Y})$ and $|z|^{2l}e^{-\varphi_{\Omega}-(\pi_1)_*(\varphi_1)}$ is $L^1$ integrable near $0\in\Delta$ for any $l\ge k$.

Denote that $x_1:=v(dd^c(\psi),0)$ and $x_2:=v(dd^c(\varphi_{\Omega}+\psi),0)$. It follows from  Siu's Decomposition Theorem that
\begin{equation}
	\label{eq:0316b}\psi=x_1\log|z|+\tilde\psi,
\end{equation}
	where $\tilde\psi$ is a subharmonic function on $\Delta$ satisfying that $v(dd^c(\tilde\psi),0)=0$. As $v(dd^c(\psi),0)<2ord_0(F)$, we have
	\begin{equation}
		\label{eq:0316c}\psi\le\frac{1}{2}(\pi_1)_*(\varphi_1)=\max\{\psi+T_1,2\log|F|\}\le x_1\log|z|+C_1
	\end{equation}
	near $0$, where $C_1$ is a constant, which implies that $v(dd^c(\frac{1}{2}(\pi_1)_*(\varphi_1)),0)=x_1$. As $x_2=v(dd^c(\varphi_{\Omega}+\psi),0)$, we have
	\begin{equation}
		\label{eq:0316d}\varphi_{\Omega}+\psi\le x_2\log|z|+C_2
	\end{equation}
	near $0$, where $C_2$ is a constant.
	Combining inequality \eqref{eq:0316a}, equality \eqref{eq:0316b}, inequality \eqref{eq:0316c} and inequality \eqref{eq:0316d}, we get that there exists $r_2\in(0,r_1)$ such that
	\begin{equation}
		\label{eq:0316e}
		\begin{split}
			&\int_{\Delta_{r_2}}|z|^{2k-x_1-x_2}e^{\tilde\psi}\\
			\le &C_3\int_{\Delta_{r_2}}|z|^{2k}e^{-\frac{1}{2}(\pi_1)_*(\varphi_1)-\varphi_{\Omega}-\psi+\tilde\psi}\\
			=&C_3\int_{\Delta_{r_2}}|z|^{2k}e^{-\frac{1}{2}(\pi_1)_*(\varphi_1)-\varphi_{\Omega}-x_1\log|w|}\\
			\le&C_3e^{C_1}\int_{\Delta_{r_2}}|z|^{2k}e^{-\varphi_{\Omega}-(\pi_1)_*(\varphi_1)}\\
			<&+\infty.
		\end{split}
	\end{equation}
	For any $p>1$, as $v(dd^c(\tilde\psi),0)=0$, it follows from Lemma \ref{l:skoda} that there exists $r_3\in(0,r_2)$ such that $\int_{\Delta_{r_3}}e^{-\frac{q}{p}\tilde\psi}<+\infty$, where $\frac{1}{p}+\frac{1}{q}=1$.
	It follows from inequality \eqref{eq:0316e} and H\"older inequality that
	\begin{equation}
		\label{eq:0316f}
		\begin{split}
		&\int_{\Delta_{r_3}}|z|^{\frac{2k-x_1-x_2}{p}}\\
		\le&\left(\int_{\Delta_{r_3}}|z|^{2k-x_1-x_2}e^{\tilde\psi}\right)^{\frac{1}{p}}\left(\int_{\Delta_{r_3}}e^{-\frac{q}{p}\tilde\psi}\right)^{\frac{1}{q}}\\
		<&+\infty,
		\end{split}
	\end{equation}
	which shows that $|z|^{\frac{2k-x_1-x_2}{p}}$ is integrable near $o$ for any $p>1$.
	As $x_1+x_2=v(dd^c(\psi),0)+v(dd^c(\varphi_{\Omega}+\psi),0)\not\in \mathbb{Z}$,  we have $|z|^{2k-x_1-x_2}$ is integrable near $o$. Note that $v(dd^c(\varphi_{\Omega}+(\pi_1)_*(\Psi_1+\varphi_1)),0)=v(dd^c(\varphi_{\Omega}+\psi+\frac{1}{2}(\pi_1)_*(\varphi_1)),0)=x_1+x_2$. It follows from Lemma \ref{l:1d-MIS} that  $(z^k,0)\in\mathcal{I}(\varphi_\Omega+(\pi_1)_*(\Psi_1+\varphi_1))_0$.

It follows from $(z^l,0)\in\mathcal{I}(\varphi_\Omega+(\pi_1)_*(\Psi_1+\varphi_1))_0$ and $F_l\in A^2(\Delta^{n-1}_{r_1},e^{-\varphi_Y})$ for any $l\ge k$ that we have $z^lF_l(w)\in \mathcal{I}(\varphi+\Psi_1+\varphi_1)_o$ for any $l\ge k$. It follows from Lemma \ref{closedness} and  $h(z,w)=\sum_{l\ge k}z^lF_l(w)$ that we have $(h,o)\in \mathcal{I}(\varphi+\Psi_1+\varphi_1)_{o}$.
Hence we obtain that $\mathcal{H}_{o}=\mathcal{I}(\varphi+\Psi_1+\varphi_1)_{o}.$

Lemma \ref{lm:trivial module 1} is proved.
\end{proof}

\begin{Lemma}\label{lm:trivial module 2}For any $z\in \tilde{Z}_1$,	assume that one of the following two conditions holds:
	
	$(A)$ $\varphi+a\psi$ is  subharmonic near $z$ for some $a\in[0,1)$;
	
	$(B)$ $(\psi-2p_z\log|w|)(z)>-\infty$, where $p_z=\frac{1}{2}v(dd^c(\psi),z)$ and $w$ is a local coordinate on a neighborhood of $z$ satisfying that $w(z)=0$.

Let $c(t)\ge \frac{e^t}{t^2}$. Then for any $p=(z,w)\in Z_1\backslash Z_3$, we have $H_{p}=I(\varphi+\Psi)_p$.
\end{Lemma}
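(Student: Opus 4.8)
The plan is to mimic the proof of Lemma \ref{lm:trivial module 1}. By Proposition \ref{module isomorphism}, the asserted equality $H_p=I(\varphi+\Psi)_p\ (=I(\varphi+\Psi_1)_p)$ is equivalent to the equality of ideals $\mathcal H_p=\mathcal I(\varphi+\varphi_1+\Psi_1)_p$ in $\mathcal O_{M,p}$, and the inclusion $\mathcal I(\varphi+\varphi_1+\Psi_1)_p\subset\mathcal H_p$ is automatic since $c(t)e^{-t}$ is decreasing (so $c(-\Psi_1)\le Ce^{-\Psi_1}$ near $p$). Fixing $T_1$ large and shrinking $M$ to $\Delta\times\Delta^{n-1}$ with $z=0$ and $p=o$, it remains to prove $\mathcal H_o\subset\mathcal I(\varphi+\varphi_1+\Psi_1)_o$.

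First I would pin down the target ideal. A direct computation gives $\Psi_1=\psi-\tfrac12\varphi_1$ as functions on $\Omega$, whence $\varphi_\Omega+\varphi_1+\Psi_1=(\varphi_\Omega+\psi)+\tfrac12\varphi_1$ is subharmonic near $0$, being the sum of the subharmonic functions $\varphi_\Omega+\psi$ and $\tfrac12\varphi_1=\max\{\psi+T_1,2\log|F|\}$. As $z\in\tilde Z_1\setminus\tilde Z_3$ forces $v(dd^c\psi,0)=2\,\mathrm{ord}_0(F)=:2N$, the Lelong number of $\tfrac12\varphi_1$ at $0$ is $\min\{2N,2N\}=2N$, so $v(dd^c(\varphi_\Omega+\varphi_1+\Psi_1),0)=x_2+2N$ with $x_2:=v(dd^c(\varphi_\Omega+\psi),0)$. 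Lemma \ref{l:1d-MIS} then gives $\mathcal I(\varphi_\Omega+\varphi_1+\Psi_1)_0=(z^{k'})_0$ with $k'=\lfloor (x_2+2N)/2\rfloor$. Using Lemma \ref{decomp} to write any $h$ as $h=\sum_l z^lF_l(w)$ and Fubini, membership $z^lF_l\in\mathcal I(\varphi+\varphi_1+\Psi_1)_o$ is guaranteed once $l\ge k'$ and $F_l\in A^2(\Delta^{n-1},e^{-\varphi_Y})$.

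Now take $(h,o)\in\mathcal H_o$. The hypothesis $c(t)\ge e^t/t^2$ upgrades integrability to $\int|h|^2e^{-\varphi-\varphi_1-\Psi_1}(\Psi_1)^{-2}<+\infty$ near $o$; since $e^s/s^2$ is bounded below by a positive constant on $[T_1,+\infty)$, this also yields $h\in A^2(e^{-\varphi-\varphi_1})$, so Lemma \ref{decomp integrable} and Remark \ref{remark of decomp integrable} apply and each $F_l\in A^2(\Delta^{n-1},e^{-\varphi_Y})$. In view of the previous paragraph and the closedness Lemma \ref{closedness}, it then suffices to show that every mode occurring in $h$ has order $l\ge k'$; the whole proof thus reduces to excluding a leading term $z^{l_0}F_{l_0}$ with $l_0<k'$.

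To exclude it, I would write $\psi=2N\log|z|+\tilde\psi$ by Siu's decomposition, with $v(dd^c\tilde\psi,0)=0$, and examine the quantitative integral above. On the region $\{\psi+T_1<2\log|F|\}$ one has $\varphi_\Omega+\varphi_1+\Psi_1=\varphi_\Omega+\psi+2\log|F|$, and under condition $(A)$, peeling off the subharmonic $\varphi_\Omega+a\psi$ (of Lelong number $y:=x_2-2N(1-a)$ at $0$) rewrites the $1$-dimensional integrand as $|z|^{2l_0-x_2-2N}$ times $e^{-\tau-2u}$ (with $\tau$ of zero Lelong number and $u=\log|F/z^N|$ bounded) times the factor $e^{-(1-a)\tilde\psi}(\Psi_1)^{-2}$. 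The decisive gain from the borderline weight $c\ge e^t/t^2$ is that this last factor is bounded below by a positive constant: the suppressing $e^{a\tilde\psi}$ that one would get from $c\equiv1$ is here replaced by the amplifying $e^{-(1-a)\tilde\psi}$, which dominates the polynomial $(\Psi_1)^{-2}$ wherever $\tilde\psi\to-\infty$. Since $l_0<k'$ gives $2l_0-x_2-2N\le-2$ and $e^{-\tau}$ is bounded below, the integral is then minorized by the divergent $\int|z|^{2l_0-x_2-2N}$, contradicting $(h,o)\in\mathcal H_o$; here Skoda's Lemma \ref{l:skoda} is what licenses reading the threshold off the power of $|z|$, as $e^{-p\tau}$ and $e^{-p(1-a)\tilde\psi}$ are locally integrable for all $p$. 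Under condition $(B)$ the finiteness $(\psi-2N\log|z|)(0)>-\infty$ keeps $-\Psi_1$ essentially bounded, so $(\Psi_1)^{-2}$ is bounded below and the same divergence, with the same threshold $k'$, follows by a Hölder--Skoda estimate exactly as in Lemma \ref{lm:trivial module 1}. I expect the main obstacle to be this final step: justifying rigorously that a single low-order mode forces the weighted integral to diverge (i.e. that the leading term controls the integral near $0$), and checking that $e^t/t^2$ is precisely strong enough to defeat the $e^{a\tilde\psi}$ suppression present in case $(A)$.
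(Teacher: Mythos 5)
Your overall skeleton matches the paper's: reduce via Proposition \ref{module isomorphism} to an ideal computation for $\mathcal H_o$, expand $h=\sum_l z^lF_l$ by Lemmas \ref{decomp integrable} and \ref{closedness}, and identify the target ideal as $(z^{k'})_0$ with $k'=\lfloor(x_1+x_2)/2\rfloor$ through Lemma \ref{l:1d-MIS}. But your core step is genuinely different: you argue by contradiction that a mode of order $l_0<k'$ forces $\int|h|^2e^{-\varphi-\varphi_1}c(-\Psi_1)$ to diverge, whereas the paper argues in the forward direction. Under $(A)$ the paper notes that $c(t)\ge e^{at}$ puts $h$ in $A^2(e^{-\varphi-\varphi_1-a\Psi_1})$ and that the one-variable weights $\varphi_\Omega+(\pi_1)_*(a\Psi_1+\varphi_1)$ and $\varphi_\Omega+(\pi_1)_*(\Psi_1+\varphi_1)$ have the \emph{same} Lelong number $x_1+x_2$ at $0$ (this is exactly where $v(dd^c\psi,0)=2\,ord_0(F)$, i.e. $z\in\tilde Z_1\setminus\tilde Z_3$, enters), so Lemma \ref{l:1d-MIS} transfers membership between the two multiplier ideals with no divergence argument at all; under $(B)$ it shows $\int|z|^{2l-x_1-x_2}<+\infty$ for every occurring $l$ by the sub-mean value inequality for $e^{\tilde\psi}$ anchored at $\tilde\psi(0)>-\infty$. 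Your case $(A)$ can be completed: your observation that $e^{-(1-a)\tilde\psi}$ dominates $(\Psi_1)^{-2}$ is correct (since $-\Psi_1\le C-\tilde\psi$ and an exponential beats a polynomial), so the weight is bounded below by a constant multiple of $|z|^{-(x_1+x_2)}e^{-\varphi_Y}$, and then the step you flag as the ``main obstacle'' is just Parseval on circles applied to the now-radial one-variable weight.

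The genuine gap is in case $(B)$. First, $(\psi-2p_z\log|w|)(z)>-\infty$ does \emph{not} keep $-\Psi_1$ essentially bounded near $z$: the residual $\tilde\psi$ in Siu's decomposition has zero Lelong number and a finite value at $z$, but it may still tend to $-\infty$ along a set accumulating at $z$, so $(\Psi_1)^{-2}$ has no positive lower bound and your divergence estimate collapses as written. Second, falling back on ``a H\"older--Skoda estimate exactly as in Lemma \ref{lm:trivial module 1}'' cannot close the argument: that estimate only yields integrability of $|z|^{(2k-x_1-x_2)/p}$ for every $p>1$ and needs the hypothesis $x_1+x_2\notin\mathbb{Z}$ to reach $p=1$; no such hypothesis is available in Lemma \ref{lm:trivial module 2}, and the conclusion fails in general at the integer threshold. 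The correct mechanism under $(B)$ --- the one the paper uses, and the one that would also repair your contradiction route --- is to bound $e^{-\tilde\psi}(\Psi_1)^{-2}$ below by a constant, keep the factor $e^{\tilde\psi}$ in the weight, and apply the sub-mean value inequality to the subharmonic function $|h/z^{l_0}|^2e^{\tilde\psi}$ together with the finiteness $e^{\tilde\psi(z)}>0$.
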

\begin{proof}It follows from Proposition \ref{module isomorphism} that  $H_{p} =I(\varphi+\Psi_1)_{p}$ if and only if $\mathcal{H}_{p}=\mathcal{I}(\varphi+\Psi_1+\varphi_1)_{p}$, where $\mathcal{H}_{p}:=\{(h,p)\in\mathcal{O}_{M,p}:|h|^2e^{-\varphi-\varphi_1}c(-\Psi_1)$ is integrable near $p\}$. Now, we prove $\mathcal{H}_{p}=\mathcal{I}(\varphi+\Psi_1+\varphi_1)_{p}$.

 Without loss of generality, we can assume that $M=\Delta \times \Delta^{n-1}$, $z=0\in \Delta$ and $p=o\in M$ (the origin of $M$). As $c(t)e^{-t}$ is decreasing, we have $\mathcal{I}(\varphi+\Psi_1+\varphi_1)_{o}\subset \mathcal{H}_{o}$. For any $(h,o)\in \mathcal{H}_o$, there exists $r_1>0$ such that $\int_{\Delta_{r_1}\times \Delta_{r_1}^{n-1}}|h|^2e^{-\varphi-\varphi_1}c(-\Psi_1)<+\infty$. Denote that $x_1:=v(dd^c(\psi),0)$ and $x_2:=v(dd^c(\varphi_{\Omega}+\psi),0)$.

	 Firstly, we prove $\mathcal{H}_{o}\subset \mathcal{I}(\varphi+\Psi_1+\varphi_1)_{o}$ under  condition $(A)$ ($\varphi+a\psi$ is subharmonic near $0$ for some $a\in[0,1)$).  As $c(t)\ge \frac{e^t}{t^2}$, we can assume that $c(t)\ge e^{at}$ when $t\ge T_1$. Then we have
\begin{equation}\nonumber
  \int_{\Delta_{r_1}\times \Delta_{r_1}^{n-1}}|h|^2e^{-\varphi-a\Psi_1-\varphi_1}
  \le \int_{\Delta_{r_1}\times \Delta_{r_1}^{n-1}}|h|^2e^{-\varphi-\varphi_1}c(-\Psi_1)<+\infty.
\end{equation}
Hence we know that $h\in A^2(M,e^{-\varphi-a\Psi_1-\varphi_1})$. It follows from Lemma \ref{decomp integrable} and Remark \ref{remark of decomp integrable} that we know that $h(z,w)=\sum_{l\ge k}z^lF_l(w)$, where $\{F_l\}_{l\ge k}$ is a sequence of holomorphic functions on $W$ satisfying $F_{k}(w)\not\equiv 0$, $F_l\in A^2(\Delta^{n-1},e^{-\varphi_Y})$ and $|z|^{2l}e^{-\varphi_{\Omega}-(\pi_1)_*(a\Psi_1+\varphi_1)}$ is $L^1$ integrable near $0\in\Delta$ for any $l\ge k$.

 Note that $\varphi_\Omega+(\pi_1)_*(a\Psi_1+\varphi_1)=\varphi_\Omega+a\psi+(2-a)\max\{\psi,2\log|F|\}$ and $v(dd^c(\varphi_\Omega+a\psi),0)=v(dd^c(\varphi_\Omega+\psi),0)-(1-a)v(dd^c(\psi),0)=x_2-(1-a)x_1$. As $v(dd^c(\psi),0)=2ord_{0}(F)$, we have $v(dd^c(\varphi_\Omega+(\pi_1)_*(a\Psi_1+\varphi_1)),0)=x_2-(1-a)x_1+(2-a)x_1=x_2+x_1$. Note that $v(dd^c(\varphi_\Omega+(\pi_1)_*(\Psi_1+\varphi_1)),0)=x_1+x_2$. It follows from $|z|^{2l}e^{-\varphi_{\Omega}-(\pi_1)_*(a\Psi_1+\varphi_1)}$ is $L^1$ integrable near $0\in\Delta$ for any $l\ge k$ and Lemma \ref{l:1d-MIS} that $(z^l,0)\in\mathcal{I}(\varphi_\Omega+(\pi_1)_*(\Psi_1+\varphi_1))_{0}$ for any $l\ge k$ .

It follows from $(z^l,0)\in\mathcal{I}(\varphi_\Omega+(\pi_1)_*(\Psi_1+\varphi_1))_0$ and $F_l\in A^2(\Delta^{n-1},e^{-\varphi_Y})$ for any $l\ge k$ that we have $z^lF_l(w)\in \mathcal{I}(\varphi+\Psi_1+\varphi_1)_o$ for any $l\ge k$. It follows from Lemma \ref{closedness} and  $h(z,w)=\sum_{l\ge k}z^lF_l(w)$ that we have $(h,o)\in \mathcal{I}(\varphi+\Psi_1+\varphi_1)_{o}$.
Hence we obtain that $\mathcal{H}_{o}=\mathcal{I}(\varphi+\Psi_1+\varphi_1)_{o}$ under  condition $(A)$.

Now, we prove $\mathcal{H}_{o}\subset \mathcal{I}(\varphi+\Psi_1+\varphi_1)_{o}$ under condition $(B)$ $\big((\psi-x_1\log|w|)(o)>-\infty\big)$. For any $(h,o)\in \mathcal{H}_o$, there exists $r_2>0$ such that $\int_{\Delta_{r_2}\times \Delta_{r_2}^{n-1}}|h|^2e^{-\varphi-\varphi_1}c(-\Psi_1)<+\infty$, which implies that
\begin{equation}
	\label{eq:0317b}\int_{\Delta_{r_2}\times \Delta_{r_2}^{n-1}}|h|^2e^{-\varphi-\varphi_1}<+\infty.
\end{equation}
Hence we know that $h \in A^2(\Delta_{r_2} \times \Delta^{n-1}_{r_2},e^{-\varphi-\varphi_1})$. It follows from Lemma \ref{decomp integrable} and Remark \ref{remark of decomp integrable} that we know that $h(z,w)=\sum_{l\ge k}z^lF_l(w)$, where $F_{k}(w)\not\equiv 0$, $\{F_l\}_{l\ge k}$ is a sequence of holomorphic functions on $W$ satisfying $F_l\in A^2(\Delta^{n-1}_{r_2},e^{-\varphi_Y})$ and $|z|^{2l}e^{-\varphi_{\Omega}-(\pi_1)_*(\varphi_1)}$ is $L^1$ integrable near $0\in\Delta$ for any $l\ge k$.

It follows from  Siu's Decomposition Theorem that
\begin{equation*}
\psi=x_1\log|z|+\tilde\psi,
\end{equation*}
where $\tilde\psi$ is a subharmonic function on $\Delta$ satisfying that $v(dd^c(\tilde\psi),0)=0$. Note that $\tilde{\psi}(0)>-\infty$ and $\varphi_\Omega+(\pi_1)_*(\varphi_1)\le \varphi_\Omega+2x_1\log|z|+C_1=\varphi_\Omega+\psi-\tilde\psi+x_1\log|z|\le (x_1+x_2)\log|z|-\tilde\psi+C_2$ near $0$, where $C_1$ and $C_2$ are constants. Note that $e^{\tilde\psi}$ is subharmonic, then it follows from $|z|^{2l}e^{-\varphi_{\Omega}-(\pi_1)_*(\varphi_1)}$ is $L^1$ integrable near $0\in\Delta$ for any $l\ge k$ and the  sub-mean value inequality that there exists $r_3\in(0,r_2)$ such that for any $l\ge k$, we have

\begin{equation}
			\label{eq:0317c}
			\begin{split}
			&\int_{\Delta_{r_3}}|z|^{2l-x_1-x_2}\\
			=&2\pi\int_0^{r_3}r^{2l+1-x_1-x_2}dr\\
			\le& \frac{1}{e^{\tilde\psi(o)}}\int_0^{r_3}\int_{0}^{2\pi}r^{2l+1-x_1-x_2}e^{\tilde\psi(re^{i\theta})}d\theta dr\\
			=&\frac{1}{e^{\tilde\psi(o)}}\int_{\Delta_{r_3}}|z|^{2l-x_1-x_2}e^{\tilde\psi}\\
			\le&\frac{C}{e^{\tilde\psi(o)}}\int_{\Delta_{r_3}}|z|^{2l}e^{-\varphi-(\pi_1)_*(\varphi_1)}\\
			<&+\infty.
			\end{split}
	\end{equation}

	As $v(dd^c(\varphi+(\pi_1)_*(\Psi_1+\varphi_1)),0)=x_1+x_2$, it follows from inequality \eqref{eq:0317c} and Lemma \ref{l:1d-MIS} that $(z^l,0)\in\mathcal{I}(\varphi+(\pi_1)_*(\Psi_1+\varphi_1))_{0}$ for any $l\ge k$.

It follows from $(z^l,0)\in\mathcal{I}(\varphi_\Omega+(\pi_1)_*(\Psi_1+\varphi_1))_0$ and $F_l\in A^2(\Delta^{n-1},e^{-\varphi_Y})$ for any $l\ge k$ that we have $z^lF_l(w)\in \mathcal{I}(\varphi+\Psi_1+\varphi_1)_o$ for any $l\ge k$. It follows from Lemma \ref{closedness} and  $h(z,w)=\sum_{l\ge k}z^lF_l(w)$ that we have $(h,o)\in \mathcal{I}(\varphi+\Psi_1+\varphi_1)_{o}$.
Hence we obtain that $\mathcal{H}_{o}=\mathcal{I}(\varphi+\Psi_1+\varphi_1)_{o}$ under  condition $(B)$.

Lemma \ref{lm:trivial module 2} is proved.
\end{proof}

\subsection{Some required lemmas}

In this section, we recall and present some required lemmas.
	\begin{Lemma}[see \cite{GY-concavity4}]
		\label{l:c(t)e^{-at}}
		Let $c(t)$ be a positive measurable function on $(0,+\infty)$, and let $a\in\mathbb{R}$. Assume that $\int_{t}^{+\infty}c(s)e^{-s}ds\in(0,+\infty)$ when $t$ near $+\infty$. Then we have
		
		$(1)$ $\lim_{t\rightarrow+\infty}\frac{\int_{t}^{+\infty}c(s)e^{-as}ds}{\int_t^{+\infty}c(s)e^{-s}ds}=1$ if and only if $a=1$;
		
		$(2)$ $\lim_{t\rightarrow+\infty}\frac{\int_{t}^{+\infty}c(s)e^{-as}ds}{\int_t^{+\infty}c(s)e^{-s}ds}=0$ if and only if $a>1$;
		
		$(3)$ $\lim_{t\rightarrow+\infty}\frac{\int_{t}^{+\infty}c(s)e^{-as}ds}{\int_t^{+\infty}c(s)e^{-s}ds}=+\infty$ if and only if $a<1$.
	\end{Lemma}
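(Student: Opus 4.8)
The plan is to reduce all three statements to a single elementary estimate obtained by factoring the exponential weight. Writing $I(t):=\int_t^{+\infty}c(s)e^{-s}ds$ and $J_a(t):=\int_t^{+\infty}c(s)e^{-as}ds$, the key observation is that $c(s)e^{-as}=c(s)e^{-s}\cdot e^{(1-a)s}$, so that for $s\ge t$ one may compare the factor $e^{(1-a)s}$ with its value $e^{(1-a)t}$ at the lower endpoint, the monotonicity of $e^{(1-a)s}$ being governed by the sign of $1-a$. By hypothesis $I(t)\in(0,+\infty)$ for $t$ near $+\infty$, so all the ratios $J_a(t)/I(t)$ are well defined there. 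I would first dispose of the trivial case $a=1$, where $J_1(t)=I(t)$ identically and the ratio equals $1$ for all large $t$.

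Next I would treat the two nontrivial signs. For $a>1$ the exponent $1-a$ is negative, so $e^{(1-a)s}\le e^{(1-a)t}$ for all $s\ge t$; integrating against the positive measure $c(s)e^{-s}\,ds$ gives $J_a(t)\le e^{(1-a)t}I(t)<+\infty$, whence $0<J_a(t)/I(t)\le e^{(1-a)t}\to 0$ as $t\to+\infty$. For $a<1$ the exponent is positive, so the reversed inequality $e^{(1-a)s}\ge e^{(1-a)t}$ yields the lower bound $J_a(t)\ge e^{(1-a)t}I(t)$, hence $J_a(t)/I(t)\ge e^{(1-a)t}\to+\infty$. This establishes the three forward implications: $a=1\Rightarrow$ the limit is $1$, $a>1\Rightarrow$ the limit is $0$, and $a<1\Rightarrow$ the limit is $+\infty$.

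Finally I would derive each stated equivalence from these implications by the trichotomy of the real line. Since the sets $\{a=1\}$, $\{a>1\}$, $\{a<1\}$ partition $\mathbb{R}$ and the corresponding limit values $1$, $0$, $+\infty$ are pairwise distinct, each ``only if'' direction is immediate: if, for instance, the limit equals $0$, then $a$ cannot equal $1$ (the limit would be $1$) nor be less than $1$ (the limit would be $+\infty$), so necessarily $a>1$; the remaining two cases are symmetric. There is no genuine obstacle here beyond bookkeeping; the only point demanding a little care is the case $a<1$, where $J_a(t)$ need not be finite, so I would phrase the lower bound $J_a(t)\ge e^{(1-a)t}I(t)$ and the resulting divergence of the ratio so that they remain valid whether or not $J_a(t)<+\infty$ (if $J_a(t)=+\infty$ then the ratio is $+\infty$ and the conclusion holds a fortiori).
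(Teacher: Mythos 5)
Your proof is correct, and it is essentially the same argument as in the literature: the paper itself states this lemma by citation to \cite{GY-concavity4}, where the proof is exactly this elementary comparison $c(s)e^{-as}=c(s)e^{-s}e^{(1-a)s}$ with $e^{(1-a)s}$ bounded by its endpoint value $e^{(1-a)t}$ according to the sign of $1-a$, followed by trichotomy for the ``only if'' directions. Your handling of the possibly infinite $J_a(t)$ when $a<1$ is the right precaution, and nothing is missing.
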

	
	The following Lemma belongs to Forn\ae ss
and Narasimhan on approximation property of plurisubharmonic
functions on Stein manifolds.

\begin{Lemma}[\cite{FN1980}]\label{l:FN1}
		Let $X$ be a Stein manifold and $\varphi \in PSH(X)$. Then there exists a sequence
		$\{\varphi_{n}\}_{n=1,\cdots}$ of smooth strongly plurisubharmonic functions such that
		$\varphi_{n} \downarrow \varphi$.
	\end{Lemma}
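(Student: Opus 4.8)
The plan is to reduce the global approximation to the elementary convolution regularization in $\mathbb{C}^N$, transported to $X$ through a holomorphic retraction, and then to assemble the local pieces into a single smooth strongly plurisubharmonic function by Demailly's regularized maximum. First I would use that a Stein manifold admits a proper holomorphic embedding $\iota:X\hookrightarrow\mathbb{C}^N$ as a closed submanifold, together with (by the Docquier--Grauert theorem) an open neighborhood $\Omega$ of $\iota(X)$ in $\mathbb{C}^N$ carrying a holomorphic retraction $r:\Omega\to X$; I would also fix a smooth strictly plurisubharmonic exhaustion $\rho:X\to\mathbb{R}$, whose existence is part of the definition of a Stein manifold.

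Next I would set $\tilde\varphi:=\varphi\circ r$, which is plurisubharmonic on the open set $\Omega\subset\mathbb{C}^N$ because $r$ is holomorphic and $\varphi\in PSH(X)$, and which restricts to $\varphi$ on $X$ since $r|_X=\mathrm{id}_X$. On $\Omega$ the ordinary convolution regularizations $\tilde\varphi_\epsilon:=\tilde\varphi*\rho_\epsilon$ (with a radial decreasing mollifier $\rho_\epsilon$) are defined on the subdomain $\Omega_\epsilon=\{z:\mathrm{dist}(z,\partial\Omega)>\epsilon\}$, are smooth and plurisubharmonic there, satisfy $\tilde\varphi_{\epsilon'}\le\tilde\varphi_\epsilon$ for $\epsilon'\le\epsilon$, and decrease pointwise to $\tilde\varphi$ as $\epsilon\downarrow0$; the monotonicity is exactly the sub-mean-value inequality for the increasing spherical averages of a subharmonic function. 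Restricting to $X$ gives, on every relatively compact piece of $X$, smooth plurisubharmonic functions decreasing to $\varphi$, and adding a small multiple $\delta\rho$ makes them strongly plurisubharmonic.

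The difficulty is that the tube $\Omega$ generally pinches toward infinity, so no single $\epsilon$ yields an approximation on all of $X$: the regularization $\tilde\varphi_\epsilon|_X$ is only available on $X\cap\Omega_\epsilon$. To pass from these local approximations to a global one I would exhaust $X$ by the sublevel sets $K_k=\{\rho\le k\}$, choose parameters $\epsilon_k\downarrow0$ and shifts so that on each overlap $K_{k+1}\setminus K_k$ the consecutive pieces can be strictly ordered, and splice them with Demailly's regularized maximum $M_\eta$---a smooth convex function, nondecreasing in each variable and equal to $\max$ off a neighborhood of the diagonal---so that composing with the plurisubharmonic pieces preserves plurisubharmonicity and selects the correct branch on each annulus. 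Adding a term $\delta_n\rho$ and performing a diagonal choice of the parameters $\epsilon_k,\delta_n,\eta$ then produces, for each $n$, a globally defined smooth strongly plurisubharmonic $\varphi_n$ with $\varphi\le\varphi_n\le\varphi+\tfrac1n$.

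The main obstacle is this last step: arranging the gluing so that the resulting sequence is genuinely decreasing, $\varphi_{n+1}\le\varphi_n$, and not merely convergent to $\varphi$. One must simultaneously maintain the lower bound $\varphi_n\ge\varphi$ (so that the limit is $\varphi$ and not something smaller), the strong plurisubharmonicity across the patching regions, and the monotonicity in $n$, all while the branch selected by $M_\eta$ changes from one annulus to the next. I expect this to require an inductive construction in which the parameters for $\varphi_{n+1}$ are chosen only after those for $\varphi_n$, exploiting the strict ordering $\tilde\varphi_{\epsilon'}<\tilde\varphi_\epsilon$ on overlaps and the freedom in the shifts to force $\varphi_{n+1}\le\varphi_n$ on each compact piece before letting the exhaustion advance.
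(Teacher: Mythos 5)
The paper itself gives no proof of this lemma: it is quoted directly from Forn\ae ss--Narasimhan \cite{FN1980}, so your proposal can only be measured against the standard argument in the literature. Your overall route is indeed that standard argument: proper embedding $\iota:X\hookrightarrow\mathbb{C}^N$, a Docquier--Grauert tubular neighborhood $\Omega$ with holomorphic retraction $r$, the observation that $\tilde\varphi=\varphi\circ r$ is plurisubharmonic on $\Omega$ and restricts to $\varphi$, the monotone convolution family $\tilde\varphi_\epsilon\downarrow\tilde\varphi$, the term $\delta\rho$ to upgrade to strong plurisubharmonicity, and Demailly's regularized maximum to splice the pieces across an exhaustion, with an inductive choice of parameters to force monotonicity in $n$. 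All of these steps are correct, and you correctly identify the pinching of the tube and the monotonicity of the glued sequence as the two real difficulties.

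There is, however, one genuine error in the final step: the claimed estimate $\varphi\le\varphi_n\le\varphi+\tfrac1n$ is impossible in general. A plurisubharmonic $\varphi$ is merely upper semicontinuous and may take the value $-\infty$; if smooth (hence continuous, finite-valued) functions $\varphi_n$ satisfied that two-sided bound, then $\varphi$ would be a uniform limit of continuous functions and hence continuous and finite --- already false for $\varphi(z)=\log|z|$, since no smooth function can lie below $\varphi+\tfrac1n$ at the origin. What the construction can and does deliver is only \emph{pointwise decreasing} convergence $\varphi_n\downarrow\varphi$: one exploits the strict monotonicity $\tilde\varphi_{\epsilon'}\le\tilde\varphi_\epsilon$ for $\epsilon'\le\epsilon$ and chooses the radii $\epsilon_k(n)$, the constants $\delta(n)$, and the gluing shifts inductively so that $\varphi_{n+1}\le\varphi_n$ everywhere, demanding uniform closeness of $\varphi_{n+1}$ not to $\varphi$ but to the previously constructed (continuous) approximants on compacta --- this is also exactly where Richberg's theorem enters if one first produces continuous strongly plurisubharmonic approximations and then smooths them while preserving the ordering. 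With the uniform bound replaced by this pointwise monotone scheme, your argument closes; as written, the last step fails.
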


	Now, we recall some basic properties of the Green functions. Let $\Omega$ be an open Riemann surface, which admits a nontrivial Green function $G_{\Omega}$, and let $z_0\in\Omega$.
	
	\begin{Lemma}[see \cite{S-O69}, see also \cite{Tsuji}] 	\label{l:green-sup}
		Let $w$ be a local coordinate on a neighborhood of $z_0$ satisfying $w(z_0)=0$.  $G_{\Omega}(z,z_0)=\sup_{v\in\Delta_{\Omega}^*(z_0)}v(z)$, where $\Delta_{\Omega}^*(z_0)$ is the set of negative subharmonic function on $\Omega$ such that $v-\log|w|$ has a locally finite upper bound near $z_0$. Moreover, $G_{\Omega}(\cdot,z_0)$ is harmonic on $\Omega\backslash\{z_0\}$ and $G_{\Omega}(\cdot,z_0)-\log|w|$ is harmonic near $z_0$.
	\end{Lemma}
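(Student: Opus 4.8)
The plan is to realize $G_\Omega(\cdot,z_0)$ as the Perron envelope of the family $\mathcal{F}:=\Delta_\Omega^*(z_0)$ and to deduce its two regularity properties from classical Perron theory together with a maximum-principle control of the singularity at $z_0$. Fix a parametric disc $\{|w|<r\}\Subset\Omega$ centered at $z_0$ with $w(z_0)=0$, and set $u(z):=\sup_{v\in\mathcal F}v(z)$; the goal is to show $u$ is harmonic off $z_0$, that $u-\log|w|$ extends harmonically across $z_0$, and finally that $u=G_\Omega(\cdot,z_0)$.

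First I would check that $\mathcal F$ is a nonempty Perron family of functions $\le 0$. Nonemptiness comes from the standing hypothesis that $\Omega$ carries a nontrivial Green function: the given $G_\Omega(\cdot,z_0)$ is negative, subharmonic, and has a logarithmic pole, so $G_\Omega(\cdot,z_0)-\log|w|$ is bounded above near $z_0$ and $G_\Omega(\cdot,z_0)\in\mathcal F$. Moreover $\mathcal F$ is closed under pairwise maxima, and under harmonic replacement on any coordinate disc $\overline D\Subset\Omega\setminus\{z_0\}$: replacing $v\in\mathcal F$ by the Poisson integral of $v|_{\partial D}$ on $D$ produces $\hat v\ge v$ that is still subharmonic, still $\le 0$ by the maximum principle (the boundary data being $\le 0$), and unchanged near $z_0$, so $\hat v\in\mathcal F$. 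Perron's theorem then shows $u$ is harmonic on each such disc, hence harmonic on $\Omega\setminus\{z_0\}$, with $u\le 0$; this is the first harmonicity assertion once $u=G_\Omega(\cdot,z_0)$ is known.

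The heart of the matter is the two-sided control of $u-\log|w|$ near $z_0$. For the upper bound I would work on the annulus $\{\epsilon<|w|<r\}$, where any $v\in\mathcal F$ is subharmonic with $v\le0$ on $|w|=r$ and, by the pole condition, $v\le\log\epsilon+A_v$ on $|w|=\epsilon$ for some constant $A_v$. Comparing $v$ with the harmonic function $\dfrac{\log\epsilon+A_v}{\log(\epsilon/r)}\,\log(|w|/r)$ (which matches these boundary bounds) and letting $\epsilon\to0^+$, so that the coefficient $\dfrac{\log\epsilon+A_v}{\log\epsilon-\log r}\to 1$ regardless of $A_v$, yields $v(z)\le\log|w|-\log r$ for every $v\in\mathcal F$ and every $0<|w|<r$; hence $u-\log|w|\le-\log r$ is bounded above near $z_0$. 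For the lower bound I would simply use $u\ge G_\Omega(\cdot,z_0)$ together with the fact that $G_\Omega(\cdot,z_0)-\log|w|$ is bounded near $z_0$ (its singularity being logarithmic by hypothesis). Thus $u-\log|w|$ is a \emph{bounded} harmonic function on the punctured disc $\{0<|w|<r\}$, so by Riemann's removable-singularity theorem it extends harmonically across $z_0$, giving the second assertion.

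It remains to identify $u$ with $G_\Omega(\cdot,z_0)$. The inequality $u\ge G_\Omega(\cdot,z_0)$ is immediate from $G_\Omega(\cdot,z_0)\in\mathcal F$. For the reverse inequality one uses the maximality of the Green function: for $v\in\mathcal F$ the difference $v-G_\Omega(\cdot,z_0)$ is subharmonic on $\Omega\setminus\{z_0\}$ (subharmonic minus harmonic) and bounded above near $z_0$ (both poles being logarithmic), hence extends to a subharmonic function on $\Omega$ that is $\le 0$ on the ideal boundary, so $v\le G_\Omega(\cdot,z_0)$ by the maximum principle and therefore $u\le G_\Omega(\cdot,z_0)$. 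I expect the main obstacle to be making the singular maximum-principle steps rigorous — both the $\epsilon\to0$ comparison producing a bound uniform in $v$, and the boundary comparison on the possibly irregular ideal boundary of $\Omega$ in the reverse inequality — since these are exactly the points where non-parabolicity enters; the Perron closure properties and the removable-singularity step are routine.
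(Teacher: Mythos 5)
The paper gives no proof of this lemma: it is quoted from [S-O69] and [Tsuji], where the Green function is constructed through a regular exhaustion of $\Omega$ and the extremal characterization is part of that construction. So your proposal is measured against that classical argument. Your overall architecture (Perron family, annulus comparison at the pole, removable singularity, extremal identification) is the right one, and two of the steps are already complete as written: closure of $\Delta_{\Omega}^*(z_0)$ under maxima and harmonic modification on discs avoiding $z_0$ is routine, and your $\epsilon\to0^+$ comparison on $\{\epsilon<|w|<r\}$ is rigorous and uniform in $v$, precisely because the coefficient $\frac{\log\epsilon+A_v}{\log\epsilon-\log r}\to1$ independently of the constant $A_v$; your worry about that step is unnecessary.

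The genuine gap is the reverse inequality $v\le G_{\Omega}(\cdot,z_0)$. After extending $s:=v-G_{\Omega}(\cdot,z_0)$ subharmonically across $z_0$, you assert that $s\le0$ ``on the ideal boundary'' and invoke a maximum principle. No such maximum principle is available on a hyperbolic surface: hyperbolic surfaces carry nonconstant bounded harmonic functions (e.g. $\mathrm{Re}\,z$ on the disc), so boundedness above gives nothing, and you have no boundary control of $s$ at all — the inequality $v\le0$ only yields $s\le-G_{\Omega}(\cdot,z_0)$, and $G_{\Omega}(\cdot,z_0)$ need not tend to $0$ along the ideal boundary (this fails at irregular boundary points). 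The classical repair, which is how the cited references argue, is a regular exhaustion $\Omega_n\uparrow\Omega$ with Green functions $G_{\Omega_n}$ vanishing on $\partial\Omega_n$: on $\Omega_n\setminus\{z_0\}$ the function $v-G_{\Omega_n}$ is subharmonic and bounded above near $z_0$ (the logarithmic poles cancel), and $\limsup\le0$ holds at $\partial\Omega_n$ because $v\le0$ while $G_{\Omega_n}\to0$ there; the maximum principle on the relatively compact $\Omega_n$ gives $v\le G_{\Omega_n}$, and letting $n\to\infty$, with $G_{\Omega_n}\downarrow G_{\Omega}$ in the paper's negative normalization, gives $v\le G_{\Omega}(\cdot,z_0)$. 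The same exhaustion also discharges the mild circularity in your write-up: you assume from the outset that $G_{\Omega}(\cdot,z_0)$ is harmonic off $z_0$ with $G_{\Omega}(\cdot,z_0)-\log|w|$ bounded near $z_0$, which is part of what the lemma asserts; under the exhaustion definition these properties follow from Harnack's monotone convergence theorem, after which your Perron computation, the removable-singularity step, and the corrected extremality argument legitimately yield both the identity $G_{\Omega}(\cdot,z_0)=\sup_{v\in\Delta_{\Omega}^*(z_0)}v$ and the two ``moreover'' clauses.
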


	\begin{Lemma}[see \cite{GY-concavity}]\label{l:G-compact}
		For any  open neighborhood $U$ of $z_0$, there exists $t>0$ such that $\{z\in\Omega: G_{\Omega}(z,z_0)<-t\}$ is a relatively compact subset of $U$.
	\end{Lemma}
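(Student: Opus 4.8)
The plan is to reduce the statement to a lower bound for $G_\Omega(\cdot,z_0)$ away from its pole, and to obtain that bound from the minimum principle applied along an exhaustion of $\Omega$. First I would fix the local coordinate $w$ near $z_0$ with $w(z_0)=0$ and choose $r_0>0$ small enough that the coordinate disc $D:=\{|w|<r_0\}$ satisfies $\overline{D}\Subset U$ with $\overline D$ compact. Write $u:=G_\Omega(\cdot,z_0)$. By Lemma \ref{l:green-sup} the function $u$ is harmonic on $\Omega\setminus\{z_0\}$ and negative, hence finite and continuous on the compact set $\partial D$; I would set $-b:=\min_{\partial D}u$, so that $0<b<+\infty$.

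The heart of the argument is the claim that $u\ge -b$ on $\Omega\setminus\overline D$. To prove it I would use the standard realization of the Green function of an open Riemann surface as a decreasing limit of Green functions of relatively compact subdomains (see \cite{S-O69}, see also \cite{Tsuji}): take an exhaustion $V_1\Subset V_2\Subset\cdots$ with $\bigcup_k V_k=\Omega$, $\overline D\Subset V_1$, and smooth boundaries, and let $u_k:=G_{V_k}(\cdot,z_0)$ be the (negative) Green function of $V_k$, so that $u_k=0$ on $\partial V_k$, $u_k$ is harmonic on $V_k\setminus\{z_0\}$, and $u_k\downarrow u$ as $k\to\infty$; in particular $u_k\ge u$ for every $k$. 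On the relatively compact region $V_k\setminus\overline D$ the function $u_k$ is harmonic, and on its boundary one has $u_k=0\ge -b$ on $\partial V_k$ and $u_k\ge u\ge -b$ on $\partial D$. The minimum principle then gives $u_k\ge -b$ throughout $V_k\setminus\overline D$, and letting $k\to\infty$ yields $u\ge -b$ on $\Omega\setminus\overline D$.

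Finally I would combine the two facts: for any $t>b$, if $u(z)<-t\le -b$ then $z\notin\Omega\setminus\overline D$, i.e.\ $z\in\overline D$. Hence $\{z\in\Omega:G_\Omega(z,z_0)<-t\}\subset\overline D$, whose closure is contained in the compact set $\overline D\subset U$, which is precisely the assertion that the sublevel set is a relatively compact subset of $U$.

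I expect the main obstacle to be the lower bound $u\ge -b$ off $\overline D$: the minimum principle does not apply directly on the non-compact set $\Omega\setminus\overline D$, since a priori $u$ might tend to $-\infty$ along the ideal boundary of $\Omega$. The exhaustion circumvents this exactly because each $u_k$ vanishes on $\partial V_k$, so after passing to the limit the only relevant boundary data is the finite minimum of $u$ on $\partial D$. This step also implicitly uses the hypothesis that $\Omega$ admits a nontrivial Green function, which is what guarantees $u_k\downarrow u\not\equiv-\infty$ rather than degenerating, as would happen on a parabolic surface.
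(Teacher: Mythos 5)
Your argument is correct. Note that the paper does not prove this lemma at all: it is recalled from \cite{GY-concavity}, so there is no in-paper proof to match; measured against the standard argument in the literature, your route is genuinely different. The usual proof stays entirely inside the extremal characterization of Lemma \ref{l:green-sup}: with $-b:=\min_{\partial D}G_{\Omega}(\cdot,z_0)$ one glues
\begin{equation*}
\hat v:=\begin{cases}\max\{G_{\Omega}(\cdot,z_0),-b\} & \text{on }\Omega\setminus\overline D,\\ G_{\Omega}(\cdot,z_0) & \text{on }\overline D,\end{cases}
\end{equation*}
checks that $\hat v$ is upper semicontinuous and satisfies the sub-mean value inequality across $\partial D$ (because $\hat v\ge G_{\Omega}(\cdot,z_0)$ everywhere and $-b\le G_{\Omega}(\cdot,z_0)$ on $\partial D$), so $\hat v$ is a negative subharmonic competitor with the correct logarithmic pole, whence $\hat v\le G_{\Omega}(\cdot,z_0)$ and therefore $G_{\Omega}(\cdot,z_0)\ge -b$ off $\overline D$ — the same key bound you prove. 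Your alternative reaches it by exhausting $\Omega$ with regular relatively compact domains $V_k$, applying the minimum principle to $G_{V_k}(\cdot,z_0)$ on $V_k\setminus\overline D$ (where the boundary data $G_{V_k}=0$ on $\partial V_k$ and $G_{V_k}\ge G_{\Omega}\ge-b$ on $\partial D$ are exactly right), and passing to the limit; this is sound, and you correctly identify that the whole weight of the argument rests on the classical fact $G_{V_k}\downarrow G_{\Omega}$ for hyperbolic $\Omega$ (\cite{S-O69}, \cite{Tsuji}) — a fact which, incidentally, you could also verify internally from Lemma \ref{l:green-sup}, since the decreasing limit of the $G_{V_k}$ is a negative function, harmonic off $z_0$, with the right pole, hence dominated by $G_{\Omega}$ while dominating it by construction. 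The trade-off: the gluing proof is shorter and self-contained relative to the lemmas quoted in this paper, while your exhaustion proof is more transparent about where non-compactness of $\Omega\setminus\overline D$ would otherwise break the minimum principle, at the cost of importing the approximation theorem for Green functions.
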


 Let $S:=\{z_j:j\in \mathbb{Z}_{\ge1} \ \& \ j<\gamma\}$ be a discrete subset of the open Riemann surface $\Omega$, where $\gamma\in\mathbb{Z}_{\ge2}\cup\{+\infty\}$. We recall the following two lemmas about $\sum_{1\le j< \gamma}q_jG_{\Omega}(\cdot,z_j)$.

 	\begin{Lemma}[see \cite{GY-concavity3}]
		\label{l:green-sup2}
	 Let $\psi$ be a negative subharmonic function on $\Omega$ such that $\frac{1}{2}v(dd^c\psi,z_j)\geq q_j$ for any $j$, where $q_j>0$ is a constant. Then $2\sum_{1\le j< \gamma}q_jG_{\Omega}(\cdot,z_j)$ is a subharmonic function on $\Omega$ satisfying that $2\sum_{1\le j<\gamma }q_jG_{\Omega}(\cdot,z_j)\ge\psi$ and $2\sum_{1\le j<\gamma }q_jG_{\Omega}(\cdot,z_j)$ is harmonic on $\Omega\backslash S$.
	\end{Lemma}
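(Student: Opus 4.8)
The plan is to prove the three assertions about $u:=2\sum_{1\le j<\gamma}q_jG_{\Omega}(\cdot,z_j)$ in the logical order: first the local behaviour at the poles, then the domination $u\ge\psi$ for a \emph{finite} pole set, and finally the passage to the (possibly infinite) sum, from which subharmonicity and harmonicity on $\Omega\setminus S$ follow.

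First I would record the local structure at each $z_j$. Since $\tfrac12 v(dd^c\psi,z_j)\ge q_j$, the Riesz measure of the subharmonic function $\psi$ carries mass at least $2q_j$ at $z_j$; hence $\psi-2q_j\log|w_j|$ is still subharmonic near $z_j$ (its Riesz measure is that of $\psi$ with part of its atom at $z_j$ removed, still nonnegative), and in particular it is bounded above there. On the other side, Lemma \ref{l:green-sup} gives that $G_{\Omega}(\cdot,z_j)$ is negative, harmonic on $\Omega\setminus\{z_j\}$, and equals $\log|w_j|+(\text{harmonic near }z_j)$; thus each $G_{\Omega}(\cdot,z_j)$ is subharmonic on $\Omega$. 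This local computation is exactly what will make the singularities of $u$ removable in the comparison argument below.

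The heart of the proof is the domination for a finite pole set; write $u_N:=2\sum_{j=1}^{N}q_jG_{\Omega}(\cdot,z_j)$. I would fix a regular exhaustion $\Omega_1\Subset\Omega_2\Subset\cdots$ of $\Omega$ by relatively compact domains with smooth boundary (so that every boundary point is regular), and use the standard facts (see \cite{Tsuji}) that the Green functions $G_{\Omega_k}(\cdot,z_j)$ increase to $G_{\Omega}(\cdot,z_j)$ as $k\to\infty$ and vanish continuously on $\partial\Omega_k$. Taking $k$ large enough that $\{z_1,\dots,z_N\}\subset\Omega_k$, set $v_k:=\psi-2\sum_{j=1}^{N}q_jG_{\Omega_k}(\cdot,z_j)$ on $\Omega_k$. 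Away from the poles $v_k$ is subharmonic, and near each $z_j$ the first step shows $v_k$ is bounded above, so it extends to a subharmonic function on $\Omega_k$ (removable singularity across the polar set $\{z_j\}$). Because $\psi<0$ and $G_{\Omega_k}(\cdot,z_j)\to 0$ at $\partial\Omega_k$, we get $\limsup_{z\to\partial\Omega_k}v_k(z)\le 0$, and the maximum principle on the relatively compact domain $\Omega_k$ yields $v_k\le 0$, i.e. $\psi\le 2\sum_{j=1}^{N}q_jG_{\Omega_k}(\cdot,z_j)$ on $\Omega_k$. Letting $k\to\infty$ and invoking the monotone convergence $G_{\Omega_k}(\cdot,z_j)\uparrow G_{\Omega}(\cdot,z_j)$ gives $\psi\le u_N$ on all of $\Omega$. \emph{This is the step I expect to be the main obstacle}: one must arrange the exhaustion so that $G_{\Omega_k}$ both vanishes on $\partial\Omega_k$ and increases to $G_{\Omega}$, and must justify the removable-singularity extension of $v_k$ across the poles through the Lelong-number computation of the first step; once the boundary behaviour is controlled the maximum principle closes the estimate.

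Finally I would pass to the full sum and read off the conclusions. If $\gamma<+\infty$ the previous step already gives $\psi\le u$, while $u$ is a finite sum of subharmonic functions, hence subharmonic, and is harmonic on $\Omega\setminus S$ since each $G_{\Omega}(\cdot,z_j)$ is. If $\gamma=+\infty$, the partial sums $u_N$ decrease to $u$ and $\psi\le u_N$ for every $N$, so $\psi\le\inf_N u_N=u$; as $\psi\not\equiv-\infty$, the decreasing limit $u$ is not $\equiv-\infty$ and is therefore subharmonic on $\Omega$. On any compact subset of $\Omega\setminus S$ the functions $u_N$ are harmonic, decrease to $u$, and are bounded below by $\psi$ (which is finite off the poles), so by Harnack's principle $u$ is harmonic on $\Omega\setminus S$. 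This establishes all three assertions of the lemma.
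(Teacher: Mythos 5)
Your proof is correct in substance, and it takes a different route from the source. Note first that this paper does not prove Lemma \ref{l:green-sup2} at all: the lemma is recalled from \cite{GY-concavity3}, and the proof there is built on the supremum characterization $G_{\Omega}(\cdot,z_0)=\sup_{v\in\Delta_{\Omega}^*(z_0)}v$ of Lemma \ref{l:green-sup} (which is recalled immediately beforehand for exactly this purpose): since $\frac{1}{2}v(dd^c\psi,z_j)\ge q_j$, one shows inductively that $\psi-2\sum_{j\le N-1}q_jG_{\Omega}(\cdot,z_j)$, after the same removable-singularity extension you perform, is $2q_N$ times a competitor in $\Delta_{\Omega}^*(z_N)$, whence $\psi\le 2\sum_{j\le N}q_jG_{\Omega}(\cdot,z_j)$; the passage $N\to\infty$ and the Harnack argument for harmonicity off $S$ are then the same as yours. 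You replace the supremum characterization by a smoothly bounded exhaustion $\Omega_k$, the vanishing of $G_{\Omega_k}$ on $\partial\Omega_k$, and the maximum principle. This is a valid classical alternative: it makes the comparison with $\psi$ completely elementary on each $\Omega_k$, at the cost of invoking the existence of regular exhaustions and the convergence $G_{\Omega_k}\to G_{\Omega}$ (your citation of \cite{Tsuji} is appropriate), whereas the intrinsic sup description needs no boundary regularity at all.

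One slip to fix: with this paper's convention (Lemma \ref{l:green-sup} makes $G_{\Omega}(\cdot,z_0)$ a supremum of \emph{negative} subharmonic functions, so $G_{\Omega}<0$ and $G_{\Omega}\sim\log|w_j|$ at the pole), the Green functions of an increasing exhaustion \emph{decrease} to $G_{\Omega}$; e.g. for disks $\{|z|<r_k\}\uparrow\Delta$ one has $G_{\{|z|<r_k\}}(z,0)=\log|z|-\log r_k\downarrow\log|z|$. Your claim that $G_{\Omega_k}(\cdot,z_j)$ increases to $G_{\Omega}(\cdot,z_j)$ belongs to the opposite (positive) convention and is inconsistent with the rest of your argument, where you need $G_{\Omega_k}\le 0$ on $\Omega_k$ and $v_k$ bounded above near the poles. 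The slip is harmless: pointwise convergence is all that the limit passage from $\psi\le 2\sum_{j\le N}q_jG_{\Omega_k}(\cdot,z_j)$ to $\psi\le 2\sum_{j\le N}q_jG_{\Omega}(\cdot,z_j)$ requires, and with the correct (decreasing) monotonicity it is immediate. The remaining steps --- the Riesz-measure argument showing $\psi-2q_j\log|w_j|$ is subharmonic, hence bounded above near $z_j$; the removable-singularity extension of $v_k$; the boundary estimate $\limsup_{z\to\partial\Omega_k}v_k(z)\le 0$; and the Harnack argument showing the decreasing limit of the partial sums, being bounded below by $\psi\not\equiv-\infty$, is subharmonic on $\Omega$ and harmonic on $\Omega\backslash S$ --- are all sound.
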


The following lemma will be used in the proofs of Proposition \ref{p-finite-2} and Proposition \ref{p-infinite-2}.
	
	\begin{Lemma}[see \cite{GY-concavity3}]
		\label{l:psi=G}
		 Let $\psi$ be a negative plurisubharmonic function on $\Omega$ satisfying $\frac{1}{2}v(dd^c\psi,z_0)\geq q_j>0$ for any $j$, where $q_j$ is a constant. Assume that $\psi\not\equiv 2\sum_{1\le k<\gamma}q_jG_{\Omega}(\cdot,z_j)$. Let $l(t)$ be a positive Lebesgue measurable function on $(0,+\infty)$ satisfying $l$ is decreasing on $(0,+\infty)$ and $\int_0^{+\infty}l(t)dt<+\infty$. Then there exists a Lebesgue measurable subset $V$ of $M$ such that $l(-\psi(z))<l(-2\sum_{1\le k<\gamma}q_jG_{\Omega}(z,z_j))$ for any  $z\in V$ and $\mu(V)>0$, where $\mu$ is the Lebesgue measure on $\Omega$.
	\end{Lemma}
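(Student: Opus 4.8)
The plan is to write $\Phi:=2\sum_{1\le j<\gamma}q_jG_{\Omega}(\cdot,z_j)$ and to find the set $V$ inside
$A:=\{z\in\Omega\setminus S:\psi(z)<\Phi(z)\}$, on which $-\psi(z)>-\Phi(z)>0$ so that $l$ decreases from $-\Phi(z)$ to $-\psi(z)$. First I would apply Lemma \ref{l:green-sup2}, whose Lelong-number hypothesis $\tfrac{1}{2}v(dd^c\psi,z_j)\ge q_j$ is exactly what is assumed, to conclude that $\Phi$ is subharmonic on $\Omega$, that $\Phi\ge\psi$ on $\Omega$, and that $\Phi$ is harmonic---hence continuous and finite---on $\Omega\setminus S$.

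The first key step, which I regard as the real content, is that $\mu(\{\psi<\Phi\})>0$. Here I would use the rigidity of subharmonic functions: if $\psi=\Phi$ held Lebesgue-a.e., then $\psi$ and $\Phi$ would have identical area averages over every small coordinate disk, and since a subharmonic function is recovered as the limit of its own disk averages, this forces $\psi\equiv\Phi$, contradicting the hypothesis $\psi\not\equiv\Phi$. Thus $\mu(\{\psi<\Phi\})>0$. Next, since $\Phi$ is continuous on $\Omega\setminus S$ and $\psi$ is upper semicontinuous, the difference $\Phi-\psi$ is lower semicontinuous there, so $A=\{\Phi-\psi>0\}\cap(\Omega\setminus S)$ is open; because it differs from $\{\psi<\Phi\}$ only by the discrete, hence null, set $S$, we get that $A$ is a nonempty open set with $\mu(A)>0$ on which $0<-\Phi<-\psi$ (the null polar set $\{\psi=-\infty\}$ only helps and may be discarded).

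It then remains to extract from $A$ a positive-measure set on which the inequality is \emph{strict}. When $l$ is strictly decreasing this is immediate, since $-\psi(z)>-\Phi(z)$ already gives $l(-\psi(z))<l(-\Phi(z))$ on all of $A$, so $V=A$ works. In general I would argue by a layer-cake identity on $A$: with $\tau(c):=\sup\{s:l(s)>c\}$ the generalized inverse of $l$, Cavalieri's principle applied to $\int_A l(-\Phi)$ and $\int_A l(-\psi)$ yields
\begin{equation*}
\int_{A}\big(l(-\Phi)-l(-\psi)\big)\,d\mu=\int_{0}^{+\infty}m(\tau(c))\,dc=\int_{(0,+\infty)}m(r)\,d(-l)(r),
\end{equation*}
where $m(r):=\mu\big(\{z\in A:-\Phi(z)<r\le-\psi(z)\}\big)$ and $d(-l)$ is the nonnegative Lebesgue--Stieltjes measure of the non-increasing function $l$. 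A Fubini computation gives $\int_0^{+\infty}m(r)\,dr=\int_A(\Phi-\psi)\,d\mu>0$, and, choosing a point $z_*\in A$ at which $\psi$ is finite and using the continuity of $-\Phi$ together with the lower semicontinuity of $-\psi$, one checks that $m(r)\ge\mu(N)>0$ for all $r$ in a whole nonempty open interval $I_*\subset(-\Phi(z_*),-\psi(z_*))$ (here $N$ is a small neighborhood of $z_*$). Since the integrand $l(-\Phi)-l(-\psi)$ is nonnegative on $A$, it suffices to show the Stieltjes integral is positive; a nonnegative function with positive integral is positive on a positive-measure set, and $V$ is taken to be that set.

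The step I expect to be the main obstacle is the final positivity $\int_{I_*}m(r)\,d(-l)(r)>0$: it requires that $l$ genuinely decrease across the window $[-\Phi(z),-\psi(z)]$, equivalently that $d(-l)$ charge $I_*$. This is automatic for strictly decreasing $l$, but for a merely non-increasing $l$ one must rule out the degenerate possibility that $[-\Phi(z),-\psi(z)]$ lies inside a plateau of $l$ for all relevant $z$; I would handle this by choosing $z_*$ so that $-\Phi(z_*)$ avoids the (at most countably many) plateaus of $l$, or, when some inequality $\tfrac{1}{2}v(dd^c\psi,z_j)>q_j$ is strict and $A$ accumulates at $z_j$, by exploiting that there $-\psi\to+\infty$ while the gap $-\psi+\Phi\to+\infty$, so the window cannot be contained in any bounded plateau and necessarily meets the support of $d(-l)$.
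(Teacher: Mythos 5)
Your setup is sound as far as it goes: Lemma \ref{l:green-sup2} gives $\Phi:=2\sum_j q_jG_{\Omega}(\cdot,z_j)\ge\psi$ with $\Phi$ harmonic off $S$, the a.e.-rigidity argument for $\mu(\{\psi<\Phi\})>0$ is correct, the openness of $A$ is correct, and the layer-cake identity is a legitimate reformulation. But the proof has a genuine gap precisely at the point you flag, and neither of your proposed remedies closes it. First, the maximal plateaus of a positive decreasing integrable function, though countably many, can tile \emph{all} of $(0,+\infty)$ --- take $l=\sum_{k\ge1}c_k$ on $[k-1,k)$ with $c_k>0$ strictly decreasing and summable --- so there need not exist any $z_*$ with $-\Phi(z_*)$ avoiding the plateaus; and even placing $-\Phi(z_*)$ at a jump of $l$ yields nothing unless some window $[-\Phi(z),-\psi(z)]$ contains a jump in its \emph{interior}, which is a global statement that your local choice of $z_*$ cannot deliver. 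Second, your fallback assumes a strict inequality $\frac{1}{2}v(dd^c\psi,z_j)>q_j$ and that $A$ accumulates at $z_j$, neither of which is part of the hypotheses or established by your argument. A telling symptom: your proof never actually uses $\int_0^{+\infty}l(t)dt<+\infty$, yet the lemma is false without it (take $l\equiv1$; then $l(-\psi)\equiv l(-\Phi)$), so any correct proof must invoke integrability in an essential way.

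What closes the gap --- and this is the route of the cited source \cite{GY-concavity3}; note the present paper only quotes the lemma without reproducing a proof --- is a global upgrade of your first step followed by an analysis at a point of $S$. Since $\Phi$ is harmonic on the connected open set $\Omega\setminus S$ and $\psi$ is subharmonic there, $\psi-\Phi$ is subharmonic and $\le0$ on $\Omega\setminus S$, so the strong maximum principle forces either $\psi\equiv\Phi$ or $\psi<\Phi$ \emph{everywhere} on $\Omega\setminus S$; the hypothesis excludes the former. Now fix $z_1\in S$ and a coordinate disk around it: on the circle $\{|w|=r_0\}$ upper semicontinuity gives $\psi-\Phi\le-c_1<0$, and since the puncture is polar, the extended maximum principle yields $\Phi-\psi\ge c_1$ on the whole punctured disk, on which $-\Phi(z)\to+\infty$ as $z\to z_1$. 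If the conclusion failed, then $l$ would be constant on $[-\Phi(z),-\Phi(z)+c_1]$ for a.e.\ $z$ in this punctured disk; as $-\Phi$ is continuous, nonconstant (real-analytic) with level sets of measure zero, the admissible left endpoints are dense in some interval $(R_0,+\infty)$, and the overlapping windows of uniform length $c_1$ force $l$ to be a positive constant on $(R_0,+\infty)$, contradicting $\int_0^{+\infty}l(t)dt<+\infty$. This is exactly where integrability and the unboundedness of $-\Phi$ near $S$ enter, and it is the content missing from your local choice of $z_*$.
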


 Let $Y$ be an $n-1$ dimensional weakly pseudoconvex K\"{a}hler manifold. Let $M=\Omega\times Y$ be a complex manifold, and $K_M$ be the canonical line bundle on $M$. Let $\pi_1$, $\pi_2$ be the natural projections from $M$ to $\Omega$ and $Y$. Let $\psi_1$ be a subharmonic function on $\Omega$ such that $q_{z_j}=\frac{1}{2}v(dd^c\psi_1,z_j)>0$ for any $z_j\in S=\{z_j:1\le j<\gamma\}$, and let $\varphi_{\Omega}$ be a Lebesgue measurable function on $\Omega$ such that $\psi_1+\varphi_{\Omega}$ is subharmonic on $\Omega$. Let $\varphi_Y$ be a plurisubharmonic function on $Y$. Denote that $\psi:=\pi_1^*(\psi_1)$, $\varphi:=\pi_1^*(\varphi_{\Omega})+\pi_2^*(\varphi_Y)$. Using the Weierstrass Theorem on open Riemann surfaces (see \cite{OF81}) and
	 Siu's Decomposition Theorem, we have
	\[\varphi_{\Omega}+\psi_1=2\log |g_0|+2u_0,\]
	where $g_0$ is a holomorphic function on $\Omega$ and $u_0$ is a subharmonic function on $\Omega$ such that $v(dd^cu_0,z)\in [0,1)$ for any $z\in\Omega$.
	
	Let $w_j$ be a local coordinate on a neighborhood $V_{z_j}\subset\subset\Omega$ of $z_j$ satisfying $w_j(z_j)=0$ for $z_j\in \tilde Z_0$, where $V_{z_j}\cap V_{z_k}=\emptyset$ for any  $j\neq k$. Without loss of generality, assume that $w_j(V_{z_j})=\{w\in\mathbb{C}:|w|<s_j\}$, where $s_j>0$. Denote that $V_0:=\bigcup_{1\leq j<\gamma}V_{z_j}$. Assume that $g_0=d_jw_j^{k_j}h_j$ on $V_{z_j}$, where $d_j$ is a constant, $k_j$ is a nonnegative integer, and $h_j$ is a holomorphic function on $V_{z_j}$ such that $h_j(z_j)=1$ for any $j$ ($1\leq j<\gamma$).
	
	Let $c(t)$ be a positive measurable function on $(0,+\infty)$ satisfying that $c(t)e^{-t}$ is decreasing and $\int_0^{+\infty}c(t)e^{-t}dt<+\infty$. We recall the following optimal $L^2$ extension theorem.

	\begin{Theorem}[see \cite{BGY-concavity5}]\label{L2ext-finite-f}
		Let $F$ be a holomorphic $(n,0)$ form on $V_0\times Y$ such that $F=\pi_1^*(w_j^{\tilde{k}_j}f_jdw_j)\wedge \pi_2^*(\tilde{F}_j)$ on $V_{z_j}\times Y$, where $\tilde{k}_j$ is a nonnegative integer, $f_j$ is a holomorphic function on $V_{z_j}$ such that $f_j(z_j)=a_j\in\mathbb{C}\setminus\{0\}$, and $\tilde{F}_j$ is a holomorphic $(n-1,0)$ form on $Y$ for any $1\leq j<\gamma$.
		
		Denote that $I_F:=\{j : 1\leq j<\gamma\,\& \,\tilde{k}_j+1-k_j\leq 0\}$. Assume that $\tilde{k}_j+1-k_j=0$ and $u_0(z_j)>-\infty$ for $j\in I_F$. If
		\begin{equation}
			\int_Y|\tilde{F}_j|^2e^{-\varphi_Y}<+\infty
		\end{equation}
		for any  $1\leq j<\gamma$, and
		\begin{equation}
			\sum_{j\in I_F}\frac{2\pi|a_j|^2 e^{-2u_0(z_j)}}{q_{z_j}|d_j|^2}\int_Y|\tilde{F}_j|^2e^{-\varphi_Y}<+\infty,
		\end{equation}
		then there exists a holomorphic $(n,0)$ form $\tilde{F}$ on $M$, such that $(\tilde{F}-F,(z_j,y))\in(\mathcal{O}(K_M)\otimes\mathcal{I}(\varphi+\psi))_{(z_j,y)}$ for any $1\leq j<\gamma$ and $y\in Y$, and
		\begin{equation}
			\int_M|\tilde{F}|^2e^{-\varphi}c(-\psi)\leq \left(\int_0^{+\infty}c(s)e^{-s}ds\right)\sum_{j\in I_F}\frac{2\pi|a_j|^2 e^{-2u_0(z_j)}}{q_{z_j}|d_j|^2}\int_Y|\tilde{F}_j|^2e^{-\varphi_Y}.	
		\end{equation}
	\end{Theorem}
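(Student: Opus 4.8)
The plan is to exploit the product structure $M=\Omega\times Y$ to reduce the theorem to a one-dimensional (multi-point) optimal $L^2$ extension problem on the base $\Omega$, handling the fiber direction $Y$ once and for all via the Bergman-space machinery of Section 2.1. The key point that makes this reduction clean is that the weights separate: $\psi=\pi_1^*(\psi_1)$, $c(-\psi)=\pi_1^*(c(-\psi_1))$ and $e^{-\varphi}=\pi_1^*(e^{-\varphi_\Omega})\,\pi_2^*(e^{-\varphi_Y})$.

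First I would fix a complete orthonormal basis $\{e_\nu\}_\nu$ of the fiber Bergman space $A^2(Y,e^{-\varphi_Y})$ of $L^2$ holomorphic $(n-1,0)$ forms, and expand each $\tilde F_j=\sum_\nu b_{j,\nu}e_\nu$, so that $\sum_\nu|b_{j,\nu}|^2=\int_Y|\tilde F_j|^2e^{-\varphi_Y}<+\infty$ by Parseval. On $V_{z_j}\times Y$ the given form reads $F=\sum_\nu\pi_1^*(b_{j,\nu}w_j^{\tilde k_j}f_j\,dw_j)\wedge\pi_2^*(e_\nu)$. Seeking the extension in the form $\tilde F=\sum_\nu\pi_1^*(\alpha_\nu)\wedge\pi_2^*(e_\nu)$, Lemma \ref{basis of product} (with Lemma \ref{decomp} and Lemma \ref{decomp integrable}) guarantees that $\tilde F$ is a well-defined holomorphic $(n,0)$ form on $M$ and that $\int_M|\tilde F|^2e^{-\varphi}c(-\psi)=\sum_\nu\int_\Omega|\alpha_\nu|^2e^{-\varphi_\Omega}c(-\psi_1)$ once the right-hand side is finite, the cross terms vanishing by orthonormality of $\{e_\nu\}$ and Fubini. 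Moreover, since near $z_j$ one has $\varphi_\Omega+\psi_1=2k_j\log|w_j|+2\log|d_jh_j|+2u_0$ with $v(dd^cu_0,z_j)\in[0,1)$, Lemma \ref{l:1d-MIS} gives $\mathcal I(\varphi_\Omega+\psi_1)_{z_j}=(w_j^{k_j})$; hence the fiberwise matching condition $(\tilde F-F)_{(z_j,y)}\in\mathcal O(K_M)_{(z_j,y)}\otimes\mathcal I(\varphi+\psi)_{(z_j,y)}$ is equivalent to requiring, for every $\nu$, that $\alpha_\nu-b_{j,\nu}w_j^{\tilde k_j}f_j\,dw_j\in\mathcal O(K_\Omega)_{z_j}\otimes(w_j^{k_j})$ at each $z_j$. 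For $j\notin I_F$ (i.e. $\tilde k_j\ge k_j$) the prescribed jet already lies in $(w_j^{k_j})$ and imposes no constraint, while for $j\in I_F$ (where $\tilde k_j=k_j-1$) it fixes exactly the leading coefficient $a_jb_{j,\nu}$ at $z_j$.

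Thus, for each $\nu$, I am reduced to the one-dimensional problem: produce a holomorphic $(1,0)$ form $\alpha_\nu$ on $\Omega$ whose $w_j^{k_j-1}dw_j$-coefficient equals $a_jb_{j,\nu}$ for every $j\in I_F$, subject to the sharp estimate
\[
\int_\Omega|\alpha_\nu|^2e^{-\varphi_\Omega}c(-\psi_1)\le\Big(\int_0^{+\infty}c(s)e^{-s}ds\Big)\sum_{j\in I_F}\frac{2\pi|a_j|^2|b_{j,\nu}|^2e^{-2u_0(z_j)}}{q_{z_j}|d_j|^2}.
\]
This is an Ohsawa--Takegoshi-type extension on the open Riemann surface $\Omega$, which I would establish as follows. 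Replace $\psi_1$ by the harmonic-off-$S$ majorant $2\sum_{j}q_{z_j}G_\Omega(\cdot,z_j)$ supplied by Lemma \ref{l:green-sup2}, whose logarithmic poles at $z_j$ match the Lelong numbers $2q_{z_j}$; the precise local behaviour that $G_\Omega(\cdot,z_j)-\log|w_j|$ is harmonic (Lemma \ref{l:green-sup}) is what produces the angular factor $2\pi$ and the weight $1/q_{z_j}$, while the regular part $2\log|d_j|+2u_0(z_j)$ of $\varphi_\Omega+\psi_1$ at $z_j$ (using $h_j(z_j)=1$ and $u_0(z_j)>-\infty$ for $j\in I_F$) produces the factor $e^{-2u_0(z_j)}/|d_j|^2$. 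Constructing a smooth almost-extension supported near the $z_j$, solving the resulting $\bar\partial$-equation with a twisted Donnelly--Fefferman/Berndtsson-type estimate against the weight carrying the Green-function singularity, and degenerating the truncation parameters yields the holomorphic $\alpha_\nu$ with the stated sharp constant; the tail factor $\int_0^{+\infty}c(s)e^{-s}ds$ enters exactly as in the optimal-constant estimates of Guan--Zhou type and reduces to the classical constant when $c\equiv1$. The approximation Lemma \ref{l:FN1} and the comparison Lemmas \ref{l:green-sup2}--\ref{l:psi=G} are used to pass between the genuine subharmonic $\psi_1$ and its Green-function model. I would take $\alpha_\nu$ to be the minimal $L^2$ solution, so that it depends linearly and boundedly on the jet data $\{a_jb_{j,\nu}\}_{j\in I_F}$.

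Finally I would reassemble: summing the one-dimensional estimates over $\nu$ and using $\sum_\nu|b_{j,\nu}|^2=\int_Y|\tilde F_j|^2e^{-\varphi_Y}$ gives
\[
\sum_\nu\int_\Omega|\alpha_\nu|^2e^{-\varphi_\Omega}c(-\psi_1)\le\Big(\int_0^{+\infty}c(s)e^{-s}ds\Big)\sum_{j\in I_F}\frac{2\pi|a_j|^2e^{-2u_0(z_j)}}{q_{z_j}|d_j|^2}\int_Y|\tilde F_j|^2e^{-\varphi_Y},
\]
which is finite by hypothesis. Hence $\tilde F=\sum_\nu\pi_1^*(\alpha_\nu)\wedge\pi_2^*(e_\nu)$ converges in $A^2(M,\dots)$ and, by the interior estimate of Lemma \ref{module bounded by norm}, locally uniformly to a holomorphic $(n,0)$ form on $M$ with the desired global bound; the matching condition holds fiberwise at each $(z_j,y)$ by construction (the constrained jet for $j\in I_F$, automatic membership in $\mathcal I(\varphi+\psi)$ for $j\notin I_F$), the closedness of the module condition under this limit being guaranteed by Lemma \ref{closedness}. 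The main obstacle is the sharp one-dimensional step: obtaining the \emph{optimal} constant $2\pi\,e^{-2u_0(z_j)}/(q_{z_j}|d_j|^2)$ rather than a merely bounded extension, which requires carefully matching the Green-function singularity to the Lelong number $2q_{z_j}$ and controlling the general decreasing weight $c(-\psi_1)$; a secondary technical point is ensuring that the per-$\nu$ extensions, chosen linearly through the minimal solution, assemble into a genuinely holomorphic form on $M$, which is precisely what the product Bergman-space lemmas of Section 2.1 are designed to deliver.
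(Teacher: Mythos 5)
This paper does not prove Theorem \ref{L2ext-finite-f}: it is recalled from \cite{BGY-concavity5}, where the extension is constructed directly on the total space $M=\Omega\times Y$ (weakly pseudoconvex K\"ahler) by the Guan--Zhou type optimal $L^2$ method, so your fiberwise route --- expanding $\tilde F_j$ in an orthonormal basis $\{e_\nu\}$ of $A^2(Y,e^{-\varphi_Y})$ and extending coefficientwise on $\Omega$ --- is a genuinely different architecture, and the resummation scaffolding you invoke (Lemma \ref{basis of product}, Lemma \ref{closedness}, Parseval, completeness of the weighted Bergman space) is sound in outline. However, there are two genuine gaps. First, your claim that for $j\notin I_F$ the matching condition ``imposes no constraint'' and that membership in $\mathcal{I}(\varphi+\psi)$ is ``automatic'' is false. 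Since $c(t)e^{-t}$ is decreasing, one has $c(-\psi_1)\le C\,e^{-\psi_1}$ on $\{\psi_1<-t_0\}$, so the gain-weighted norm is \emph{weaker} than the multiplier weight $e^{-\varphi_\Omega-\psi_1}$, and finiteness of $\int_\Omega|\alpha_\nu|^2e^{-\varphi_\Omega}c(-\psi_1)$ does not force $\alpha_\nu\in(w_j^{k_j})$ at $z_j$: already for $c\equiv 1$ and $q_{z_j}\ge k_j$ the local weight $e^{-\varphi_\Omega}\sim|w_j|^{2q_{z_j}-2k_j}e^{-2u_0}$ is integrable, so the minimal solution need not vanish at $z_j$ at all, and then $(\tilde F-F)_{(z_j,y)}\in\mathcal{O}(K_M)_{(z_j,y)}\otimes\mathcal{I}(\varphi+\psi)_{(z_j,y)}$ fails wherever $\mathcal{I}(\varphi_\Omega+\psi_1)_{z_j}=(w_j^{k_j})\ne\mathcal{O}_{\Omega,z_j}$ by Lemma \ref{l:1d-MIS}. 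You must impose, in each one-dimensional problem, $\mathrm{ord}_{z_j}(\alpha_\nu)\ge k_j$ for all $j\notin I_F$ and the full jet condition $\alpha_\nu\equiv a_jb_{j,\nu}w_j^{k_j-1}dw_j \bmod (w_j^{k_j})$ (not merely the top coefficient) for $j\in I_F$; this is fixable, since the one-dimensional theorem carries exactly these side conditions, but as written your construction does not satisfy the conclusion.

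Second, and more centrally, the sharp one-dimensional step is assumed rather than proved. The entire content of the theorem is the \emph{optimal} constant $\left(\int_0^{+\infty}c(s)e^{-s}ds\right)\frac{2\pi e^{-2u_0(z_j)}}{q_{z_j}|d_j|^2}$ for a general gain $c$ (merely measurable with $c(t)e^{-t}$ decreasing), and it is precisely this constant that the linearity analysis (e.g.\ the equality argument in Proposition \ref{p-finite-2}, via Lemma \ref{l:inte} and Lemma \ref{l:psi=G}) consumes: any loss of a multiplicative factor destroys the characterization. Twisted Donnelly--Fefferman/Berndtsson estimates, which you propose, are known \emph{not} to yield optimal constants; the mechanism used throughout this series is the Guan--Zhou undetermined-function/ODE method, which you neither carry out nor replace. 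If instead you cite the $n=1$ case of the theorem on open Riemann surfaces from the earlier papers of the series, your reduction (with the corrected jet/vanishing constraints above, and with the product-Bergman lemmas of Section 2.1 transplanted from domains to the manifold $Y$, a routine but necessary adjustment) would assemble into a legitimate alternative proof; as it stands, the crux is missing.
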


	The following lemma will be used in the proofs of Proposition \ref{p-finite-2} and Proposition \ref{p-infinite-2}.
	\begin{Lemma}\label{l:inte}
		For any $1\le j<\gamma$, assume that one of the following conditions holds:
	
	$(A)$ $\varphi_{\Omega}+a\psi_1$ is subharmonic near $z_j$ for some $a\in[0,1)$;
	
	$(B)$ $(\psi_1-2q_{z_j}\log|w_j|)(z_j)>-\infty$, where $q_{z_j}=\frac{1}{2}v(dd^c(\psi_1),z_j)$.
	
	Let $F$ be a holomorphic $(n,0)$ form on $M$ such that $F=\sum_{l\ge\tilde{k}_j}\pi_1^*(w_j^ldw_j)\wedge\pi_2^*(F_{j,l})$ on $V_{z_j}\times Y$ for any $j$  according to Lemma \ref{decomp}, where $\tilde{k}_j$ is a nonnegative integer, $F_{j,l}$ is a holomorphic $(n-1,0)$ form on $Y$ satisfying that $\tilde{F}_j:=F_{j,\tilde{k}_j}\not\equiv 0$. Denote that
		\begin{equation*}
			I_F:=\{j:1\leq j<\gamma \ \& \ \tilde{k}_j+1-k_j\leq 0\}.
		\end{equation*}
		Assume that there exists a constant $C>0$ such that
		\begin{equation*}
	\int_{\{\psi<-t\}}|F|^2e^{-\varphi}\tilde c(-\psi)\le C\int_t^{+\infty}\tilde c(s)e^{-s}ds		
	\end{equation*}
		for any $t\ge0$ and any nonnegative Lebesgue measurable function $\tilde c(t)$ on $(0,+\infty)$.
		 Then $\tilde{k}_j+1-k_j=0$ for any $j\in I_F$, $\int_Y| F_{j,l}|^2e^{-\varphi_Y}<+\infty$ for any $j,l$, and
		\begin{equation*}
			\sum_{j\in I_F}\frac{2\pi e^{-2u_0(z_j)}}{q_{z_j}|d_j|^2}\int_Y|\tilde{F}_j|^2e^{-\varphi_Y}\le C.
		\end{equation*}
	\end{Lemma}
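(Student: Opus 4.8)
The plan is to convert the integral hypothesis into a pointwise bound on a push-forward measure, and then to read off the local data at each $z_j$ from the leading asymptotics of this measure as $-\psi\to+\infty$.

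First I would reduce the hypothesis to a density estimate. Writing $d\nu=|F|^2e^{-\varphi}$ and testing the assumed inequality with $\tilde c=\mathbb{1}_{[t_2,t_1)}$ (and $t=t_2$) gives
\begin{equation*}
\int_{\{-t_1<\psi\le -t_2\}}|F|^2e^{-\varphi}\le C\big(e^{-t_2}-e^{-t_1}\big)
\end{equation*}
for all $0\le t_2<t_1\le+\infty$; that is, the push-forward of $\nu$ under $-\psi$ has density $\le Ce^{-s}$ on $(0,+\infty)$, and in particular $\nu(\{\psi<-t\})\le Ce^{-t}<+\infty$. Since $q_{z_j}>0$ forces $\psi_1(z_j)=-\infty$ and $\psi_1$ is upper semicontinuous, each $\{\psi_1<-t\}$ is open and contains $z_j$; as the $V_{z_j}$ are disjoint, it suffices to analyse $\nu$ on each $V_{z_j}\times Y$ separately and to sum the nonnegative contributions at the end.

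For part $(2)$ I would fix $t$ large and choose a coordinate disc $\Delta_\epsilon\subset\{\psi_1<-t\}\cap V_{z_j}$ centred at $z_j$. Then $\int_{\Delta_\epsilon\times Y}|F|^2e^{-\varphi}\le\nu(\{\psi<-t\})<+\infty$, so $F\in A^2(\Delta_\epsilon\times Y,e^{-\varphi_\Omega}e^{-\varphi_Y})$. Applying Lemma~\ref{decomp integrable} (and Remark~\ref{remark of decomp integrable}) with $\varphi_1=\varphi_\Omega$ and $\varphi_2=\varphi_Y$ — the local integrability of $e^{a\varphi_\Omega}$ needed there holds because $\varphi_\Omega+\psi_1$ and $\psi_1$ are subharmonic — yields $F_{j,l}\in A^2(Y,e^{-\varphi_Y})$, i.e. $\int_Y|F_{j,l}|^2e^{-\varphi_Y}<+\infty$, for every $j$ and every $l\ge\tilde k_j$.

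The heart of the argument is the local asymptotics giving $(1)$ and $(3)$. By Siu's decomposition write $\psi_1=2q_{z_j}\log|w_j|+\tilde\psi_j$ with $v(dd^c\tilde\psi_j,z_j)=0$, and from $\varphi_\Omega+\psi_1=2\log|g_0|+2u_0$ with $g_0=d_jw_j^{k_j}h_j$ obtain
\begin{equation*}
e^{-\varphi_\Omega}=|w_j|^{-2(k_j-q_{z_j})}|d_j|^{-2}|h_j|^{-2}e^{-2u_0+\tilde\psi_j}.
\end{equation*}
Inserting $F=\sum_{l\ge\tilde k_j}\pi_1^*(w_j^ldw_j)\wedge\pi_2^*(F_{j,l})$ and integrating over $Y$ and over the angular variable of $w_j$, the term of index $l$ carries the radial weight $|w_j|^{2(l-k_j+q_{z_j})}$, so the lowest index $l=\tilde k_j$ dominates as $t\to+\infty$; the cross terms $l\neq l'$ and the non-radial factors $|h_j|^{-2}e^{-2u_0+\tilde\psi_j}$ are controlled near $z_j$ by conditions $(A)$/$(B)$ together with the sub-mean-value inequality, exactly as in the proof of Lemma~\ref{lm:trivial module 2} (under $(B)$ one uses $\tilde\psi_j(z_j)>-\infty$, under $(A)$ one trades $e^{-\varphi_\Omega}$ against $e^{-\varphi_\Omega-a\psi_1}$). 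A direct computation of $\int_0^R r^{2(\tilde k_j-k_j+q_{z_j})+1}\,dr$ with $R\sim e^{-t/(2q_{z_j})}$ then shows
\begin{equation*}
\lim_{t\to+\infty}e^{t}\,\nu\big(\{\psi<-t\}\cap(V_{z_j}\times Y)\big)=\frac{2\pi e^{-2u_0(z_j)}}{q_{z_j}|d_j|^2}\int_Y|\tilde F_j|^2e^{-\varphi_Y}
\end{equation*}
precisely when $\tilde k_j+1-k_j=0$ (the factors $e^{\pm\tilde\psi_j(z_j)}$ cancelling), whereas if $\tilde k_j+1-k_j<0$ the same computation gives $e^{t}\nu(\{\psi<-t\}\cap(V_{z_j}\times Y))\to+\infty$ (either the radial integral diverges at $z_j$, or it decays strictly slower than $e^{-t}$). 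Since $\tilde F_j\not\equiv0$ makes $\int_Y|\tilde F_j|^2e^{-\varphi_Y}\in(0,+\infty)$, the divergent case contradicts $\nu(\{\psi<-t\})\le Ce^{-t}$; hence $\tilde k_j+1-k_j=0$ for every $j\in I_F$, which is $(1)$, and moreover $u_0(z_j)>-\infty$. Finally, for $(3)$, disjointness of the $V_{z_j}$ gives $\nu(\{\psi<-t\})\ge\sum_{j\in S'}\nu(\{\psi<-t\}\cap(V_{z_j}\times Y))$ for any finite $S'\subset I_F$, so by the previous limit and superadditivity of $\liminf$,
\begin{equation*}
\sum_{j\in S'}\frac{2\pi e^{-2u_0(z_j)}}{q_{z_j}|d_j|^2}\int_Y|\tilde F_j|^2e^{-\varphi_Y}\le\liminf_{t\to+\infty}e^{t}\nu(\{\psi<-t\})\le C,
\end{equation*}
and letting $S'\uparrow I_F$ yields the stated bound. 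I expect the main obstacle to be the asymptotic computation just described: rigorously isolating the leading constant in the presence of the non-radial and possibly singular weight $e^{-2u_0+\tilde\psi_j}$ and the cross terms of the series, which is precisely where conditions $(A)$/$(B)$ and an averaging (sub-mean-value) argument are essential.
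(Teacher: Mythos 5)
Your overall strategy --- localize at each $z_j$, extract the leading asymptotics of the weighted mass of $\{\psi<-t\}\cap(V_{z_j}\times Y)$, and compare against $Ce^{-t}$ --- is the same as the paper's, and your final superadditivity/exhaustion step matches the paper's passage from finite $I_m$ to $I_F$. The genuine gap is in your opening reduction: by testing the hypothesis only with indicator functions you retain nothing but the single measure $\nu=|F|^2e^{-\varphi}$ and the bound $\nu(\{\psi<-t\})\le Ce^{-t}$, and this is not enough to run the lower-bound asymptotics under condition $(A)$. Writing $\psi_1=2q_{z_j}\log|w_j|+\tilde\psi_j$ with $v(dd^c\tilde\psi_j,z_j)=0$, your limit formula rests on the sub-mean-value inequality for $e^{\tilde\psi_j}$ centred at $z_j$, which yields only the trivial bound when $\tilde\psi_j(z_j)=-\infty$; condition $(A)$ is precisely the case where this may happen (condition $(B)$ is the case $\tilde\psi_j(z_j)>-\infty$), and then $u_0(z_j)=-\infty$ as well, so ``the factors $e^{\pm\tilde\psi_j(z_j)}$ cancelling'' is a $0\cdot\infty$ indeterminacy and the constant $\frac{2\pi e^{-2u_0(z_j)}}{q_{z_j}|d_j|^2}$ cannot be isolated this way. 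The paper's proof keeps the full family of test functions exactly for this reason: choosing $\tilde c$ with $b(t)=\tilde c(t)e^{-at}$ increasing, it rewrites $e^{-\varphi}\tilde c(-\psi)=e^{-\pi_1^*(\varphi_{\Omega}+a\psi_1)-\pi_2^*(\varphi_Y)}\,b(-\pi_1^*(\psi_1))$, absorbs the singular factor $e^{a\psi_1}$ into the subharmonic weight $\varphi_{\Omega}+a\psi_1$, and the divergent quantities enter only through the finite combination $(1-a)(\psi_1-2q_{z_j}\log|w_j|)(z_j)+(\varphi_{\Omega}+a\psi_1-2s_1\log|w_j|)(z_j)=2u_0(z_j)+2\log|d_j|$; the conclusion $\tilde k_j-k_j+1=0$ then follows from Lemma \ref{l:c(t)e^{-at}}. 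Your remark about ``trading $e^{-\varphi_{\Omega}}$ against $e^{-\varphi_{\Omega}-a\psi_1}$'' is incompatible with having already frozen $\tilde c$ to indicators, because that trade changes the gain from $\tilde c(-\psi)$ to $e^{-a\psi}\tilde c(-\psi)$. Note also that the paper establishes only a $\liminf$ lower bound, not the limit you assert; the $\liminf$ is all that your (correct) superadditivity step needs.

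A secondary, fixable issue: to get $\int_Y|F_{j,l}|^2e^{-\varphi_Y}<+\infty$ you invoke Lemma \ref{decomp integrable} with $\varphi_1=\varphi_{\Omega}$ and $\varphi_2=\varphi_Y$, but that lemma is stated for products $U\times W$ with $W\subset\mathbb{C}^m$ an open set, while here the second factor is the manifold $Y$; the statement that does apply on $\Delta\times Y$ (the ``moreover'' part of Lemma \ref{decomp}) requires the weight on the disc factor to be radial, which $e^{-\varphi_{\Omega}}$ is not. This is again where conditions $(A)$ and $(B)$ enter in the paper: under $(A)$ one bounds $e^{-\varphi_{\Omega}-a\psi_1}b(-\psi_1)$ below by a constant on a small disc and applies Lemma \ref{decomp} with constant $\rho_1$; under $(B)$ one combines Fubini's theorem, the sub-mean-value inequality and an induction on $l$.
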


\begin{proof}
	Firstly, we prove that $\int_Y| F_{j,l}|^2e^{-\varphi_Y}<+\infty$ for any $j,l$.	 Fixed $z_j$,  there exists $r_j>0$ such that $\{|w_j|<r_j\}\subset V_{z_j}$ and
	\begin{equation}
		\label{eq:0405a}\int_{\{|w_j|<r_j\}\times Y}|F|^2e^{-\varphi}\tilde c(-\psi)<+\infty
	\end{equation}
	for any nonnegative Lebesgue measurable function $\tilde c(t)$ on $(0,+\infty)$ satisfying $\int_0^{+\infty}\tilde c(s)e^{-s}ds<+\infty$.
	
	 If there exists $a\in[0,1)$ such that $\varphi_{\Omega}+a\psi_1$ near $z_j$, note that there exists a positive Lebesgue measurable function $\tilde c(t)$ on $(0,+\infty)$ such that $\tilde c(t)e^{-at}$ (denoted by $b(t)$) is increasing near $+\infty$ and $\int_0^{+\infty}\tilde c(s)e^{-s}ds<+\infty$, then there exist $\tilde r_j\in(0,r_j)$ and  $C_1<0$ such that $b(t)$ is increasing on $(-C_1,+\infty)$
	 $$\varphi_{\Omega}+a\psi_1\le C_1,$$
	 $$\psi_1\le C_1$$
	 on $\{|w_j|<\tilde r_j\}$.
 Inequality \eqref{eq:0405a} shows that
	 \begin{equation}
	 	\label{eq:0405b}\begin{split}
	 	&\int_{\{|w_j|<\tilde r_j\}\times Y}|F|^2e^{-\pi_2^*(\varphi_Y)-C_1}b(-C_1)\\
	 	\le&\int_{\{|w_j|<\tilde r_j\}\times Y}|F|^2e^{-\pi_1^*(\varphi_{\Omega}+a\psi_1)-\pi_2^*(\varphi_Y)}b(-\pi_1^*(\psi_1))\\
	 	<&+\infty.
	 	\end{split}
	 \end{equation}	
It follows from Lemma 	\ref{decomp} and inequality \eqref{eq:0405b} that $\int_Y|F_{j,l}|^2e^{-\varphi_Y}<+\infty$ for any $l\ge\tilde k_j$.
	
	If $(\psi_1-2q_{z_j}\log|w_j|)(z_j)>-\infty$, then $\tilde\psi_1=\psi_1-2q_{z_j}\log|w_j|$ is a subharmonic function on $V_{z_j}$ satisfying $\tilde\psi_1(z_j)>-\infty$. Choosing $\tilde c\equiv1$, it  follows from Fubini's Theorem and sub-mean value inequality of subharmonic functions  that
	\begin{equation}
		\label{eq:0405c}\begin{split}
					&\int_{\{|w_j|<r_j\}\times Y}|F|^2e^{-\varphi}\\
					\ge&C_2\int_{\{|w_j|<r_j\}\times Y}|F|^2e^{\pi_1^*(\tilde\psi_1+2q_{z_j}\log|w_j|)-\pi_2^*(\varphi_Y)}\\
					\ge&4\pi e^{\tilde\psi_1(z_j)} C_2\int_0^{r_j}|r|^{2\tilde k_j+2q_{z_j}+1}dr\int_Y|\tilde F_j|^2e^{-\varphi_Y},
					\end{split}
	\end{equation}
	where $C_2>0$ is a constant.
	Combining inequality \eqref{eq:0405a} and inequality \eqref{eq:0405c}, we get that $\int_Y|\tilde F_j|^2e^{-\varphi_Y}<+\infty$. It follows from Lemma \ref{decomp integrable} and Remark \ref{remark of decomp integrable} that $|w_j|^{2l}e^{-\varphi_{\Omega}}$ is integrable near $z_j$ for any $l\ge\tilde k_j$, hence there exists $r'_j\in(0,r_j)$ such that
	\begin{equation}\label{eq:220406a}
		\begin{split}
			&\int_{\{|w_j|<r'_j\}\times Y}|\sum_{l\ge\tilde k_j+1}\pi_1^*(w_j^ldw_j)\wedge\pi_2^*(F_{j,l})|^2e^{-\varphi}\\
			\le&2\int_{\{|w_j|<r'_j\}\times Y}|\pi_1^*(w_j^{\tilde k_j}dw_j)\wedge\pi_2^*(\tilde F_j)|^2e^{-\varphi}+2\int_{\{|w_j|<r'_j\}\times Y}|F|^2e^{-\varphi}\\
			<&+\infty.
		\end{split}
	\end{equation}
Combining 	Fubini's Theorem and sub-mean value inequality of subharmonic functions, inequality \eqref{eq:220406a} shows that $\int_Y|F_{j,\tilde k_j+1}|^2e^{-\varphi_Y}<+\infty$. Hence, we know that $\int_Y|F_{j,l}|^2e^{-\varphi_Y}<+\infty$ for any $l\ge\tilde k_j$ by induction.

Now, we prove that 	$\tilde{k}_j+1-k_j=0$ for any $j\in I_F$ and
		\begin{equation*}
			\sum_{j\in I_F}\frac{2\pi e^{-2u_0(z_j)}}{q_{z_j}|d_j|^2}\int_Y|\tilde{F}_j|^2e^{-\varphi_Y}\le C.
		\end{equation*}
		According to  Siu's Decomposition Theorem, Lemma \ref{l:green-sup} and \ref{l:green-sup2}, we can assume that
		\begin{equation}
			\psi_1=2\sum_{1\leq j<\gamma}q_{z_j}G_{\Omega}(\cdot,z_j)+\psi_0,
		\end{equation}
		where $\psi_0$ is a negative subharmonic function on $\Omega$ such that $v(dd^c(\psi_0),z_j)=0$ for any  $1\leq j<\gamma$.
Lemma \ref{l:FN1} shows that there exist smooth subharmonic functions $\{u_l\}_{l\in\mathbb{Z}_{\ge1}}$ and $\{\tilde\psi_l\}_{l\in\mathbb{Z}_{\ge1}}$ on $\Omega$ such that $u_l$ and $\tilde\psi_l$ are decreasingly convergent to $u_0$ and $\psi_0$ with respect to $l$, respectively. For any positive integer $m$ satisfying $2\le m\le\gamma$, denote that $I_m:=\{j\in I_F:1\le j<m\}$. Denote that
$$G:=2\sum_{1\le j<\gamma}q_{z_j}G_{\Omega}(\cdot,z_j).$$

Fixing a positive integer $m$, there exists $a\in[0,1)$ such that $\varphi_{\Omega}+a\psi_1$ is subharmonic near $z_j$ for any $1\le j<m$ satisfying  $(\psi_1-2q_{z_j}\log|w_j|)(z_j)=-\infty$. Note that there exists a positive Lebesgue measurable function $\tilde c(t)$ on $(0,+\infty)$ such that $\tilde c(t)e^{-at}$ (denoted by $b(t)$) is increasing near $+\infty$ and $\int_0^{+\infty}\tilde c(s)e^{-s}ds<+\infty$.  Note that
\begin{equation}
	\label{eq:0405d}\begin{split}
		\int_{\{\psi<-t\}}|F|^2e^{-\varphi}\tilde c(-\psi)&\ge\sum_{1\le j<m}\int_{\{\psi<-t\}\cap (V_{z_j}\times Y)}|F|^2e^{-\varphi}\tilde c(-\psi)\\
		&\ge\sum_{j\in I_m}\int_{\{\psi<-t\}\cap (V_{z_j}\times Y)}|F|^2e^{-\varphi}\tilde c(-\psi).
	\end{split}
\end{equation}
Fixing $j\in I_m$, we will prove that $\tilde k_j+1=k_j$ and
\begin{equation}
	\label{eq:0405e}\liminf_{t\rightarrow+\infty}\frac{\int_{\{\psi<-t\}\cap (V_{z_j}\times Y)}|F|^2e^{-\varphi}\tilde c(-\psi)}{\int_t^{+\infty}\tilde c(s)e^{-s}ds}\geq \frac{2\pi e^{-2u_0(z_j)}}{q_{z_j}|d_j|^2}\int_Y|\tilde{F}_j|^2e^{-\varphi_Y}.
\end{equation}

If $(\psi_1-2q_{z_j}\log|w_j|)(z_j)>-\infty$, we have $\psi_0(z_j)>-\infty$. For any $\epsilon>0$, there exists $r_{j,1}>0$ such that
 $U_j:=\{|w_j(z)|<r_{j,1}:z\in V_{z_j} \}\Subset V_{z_j}$,
 $$\sup_{z\in U_j}2|u_l(z)-u_l(z_j)|<\epsilon,$$
 and
$$\sup_{z\in U_j}|H_j(z)-H_j(z_j)|<\epsilon,$$ where $H_j=G-2q_{z_j}\log|w_j|+\tilde\psi_l+\epsilon$ is a smooth function on $V_{z_j}$. There exists $t_1>0$ such that $\{2q_{z_j}\log|w_j|+H_j(z_j)<-t_1\}\Subset U_j$ and $\tilde c$ is increasing on $[t_1,+\infty)$, then
we get that
\begin{equation}
	\label{eq:0406a}\begin{split}
	&\int_{\{\psi<-t\}\cap (V_{z_j}\times Y)}|F|^2e^{-\varphi}\tilde c(-\psi)\\
	\geq
		&\int_{\{\pi_1^*(G+\tilde\psi_l)<-t\}\cap (V_{z_j}\times Y)}|F|^2e^{-\varphi}\tilde c(-\psi)\\
		\ge&\int_{\{2q_{z_j}\log|w_j|+H_j(z_j)<-t\}\times Y}|F|^2e^{\pi_1^*(-2\log|g_0|-2u_l(z_j)-\epsilon+G+\psi_0)+\pi_2^*(-\varphi_Y)}\\
		&\times \tilde c(\pi_1^*(-2q_{z_j}\log|w_j|-H_j(z_j)))
	\end{split}
\end{equation}
for $t\ge t_1$.
Note that there exists a holomorphic function $\tilde g_j$ on $U_j$ such that $|\tilde g_j|^2=e^{\frac{G}{q_{z_j}}}$. Note that $ord_{z_j}(\tilde g_j)=1$, and let $\tilde d_j=\lim_{z\rightarrow z_j}\frac{\tilde g_j(z)}{w_j(z)}\in\mathbb{C}\backslash\{0\}$. Note that $\lim_{z\rightarrow z_j}\frac{ g_0(z)}{w^{k_j}_j(z)}=d_j\in\mathbb{C}\backslash\{0\}$. Without loss of generality, assume that $\{g_0(z)=0:z\in U_j\}=\{z_j\}$. It follows from Fubini's Theorem, sub-mean value inequality of subharmonic functions and inequality \eqref{eq:0406a} that
\begin{equation}
	\label{eq:0406b}\begin{split}
		&\int_{\{\psi<-t\}\cap (V_{z_j}\times Y)}|F|^2e^{-\varphi}\tilde c(-\psi)\\
		\ge&\int_{\{2q_{z_j}\log|w_j|+H_j(z_j)<-t\}\times Y}|F|^2|\pi_1^*(\tilde g_j)|^{2q_{z_j}}e^{\pi_1^*(-2\log|g_0|-2u_l(z_j)-\epsilon+\psi_0)+\pi_2^*(-\varphi_Y)}\\
		&\times \tilde c(\pi_1^*(-2q_{z_j}\log|w_j|-H_j(z_j)))\\
		\ge&4\pi\frac{|\tilde d_j|^{2q_{z_j}}e^{-2u_l(z_j)-\epsilon+\psi_0(z_j)}}{|d_j|^2}
		\int_0^{e^{-\frac{t+H_j(z_j)}{2q_{z_j}}}}r^{2\tilde k_j+2q_{z_j}-2k_j+1}\tilde c(-2q_{z_j}\log|r|-H_j(z_j))dr\\
		&\times\int_Y|\tilde F_j|^2e^{-\varphi_Y}\\
		=&\frac{2\pi|\tilde d_j|^{2q_{z_j}}e^{-2u_l(z_j)-\epsilon+\psi_0(z_j)}}{q_{z_j}|d_j|^2}e^{-\left(\frac{\tilde k_j-k_j+1}{q_{z_j}}+1\right)H_j(z_j)}\int_t^{+\infty}e^{-\left(\frac{\tilde k_j-k_j+1}{q_{z_j}}+1\right)s}\tilde c(s)ds\\
		&\times\int_Y|\tilde F_j|^2e^{-\varphi_Y}
			\end{split}
\end{equation}
for $t\ge t_1$.
As $\liminf_{t\rightarrow+\infty}\frac{\int_{\{\psi<-t\}\cap (V_{z_j}\times Y)}|F|^2e^{-\varphi}\tilde c(-\psi)}{\int_t^{+\infty}\tilde c(s)e^{-s}ds}<+\infty$ and $\tilde F_j\not\equiv0$, it follows from Lemma \ref{l:c(t)e^{-at}} and $\tilde k_j-k_j+1\le0$ that $\tilde k_j-k_j+1=0$. Hence letting $\epsilon\rightarrow0+0$, inequality \eqref{eq:0406b} implies that
\begin{equation}
	\label{eq:0406c:correct}\begin{split}
	&\liminf_{t\rightarrow+\infty}\frac{\int_{\{\psi<-t\}\cap (V_{z_j}\times Y)}|F|^2e^{-\varphi}\tilde c(-\psi)}{\int_t^{+\infty}\tilde c(s)e^{-s}ds}\\
	\ge&\frac{2\pi|\tilde d_j|^{2q_{z_j}}e^{-2u_l(z_j)+\psi_0(z_j)-(G-2q_{z_j}\log|w_j|)(z_j)-\tilde\psi_l(z_j)}}{q_{z_j}|d_j|^2}
		\times\int_Y|\tilde F_j|^2e^{-\varphi_Y}.
	\end{split}
\end{equation}
Note that $|\tilde d_j|^{2q_{z_j}}e^{-(G-2q_{z_j}\log|w_j|)(z_j)}=1$, $\lim_{l\rightarrow+\infty}u_j(z_j)=u_0(z_j)$ and $\lim_{l\rightarrow+\infty}\tilde\psi_l(z_j)=\psi_0(z_j)$. Letting $l\rightarrow+\infty$, it follows from  inequality \eqref{eq:0406c:correct} that
\begin{equation*}
	\liminf_{t\rightarrow+\infty}\frac{\int_{\{\psi<-t\}\cap (V_{z_j}\times Y)}|F|^2e^{-\varphi}\tilde c(-\psi)}{\int_t^{+\infty}\tilde c(s)e^{-s}ds}\geq \frac{2\pi e^{-2u_0(z_j)}}{q_{z_j}|d_j|^2}\int_Y|\tilde{F}_j|^2e^{-\varphi_Y}.
\end{equation*}

If $(\psi_1-2q_z\log|w_j|)(z_j)=-\infty$, then $\varphi_{\Omega}+a\psi_1$ is subharmonic on a neighborhood $\tilde V_{z_j}\subset V_{z_j}$ of $z_j$. Denote that
$$\varphi_0:=\varphi_{\Omega}+a\psi_1-2s_1\log|w_j|$$
is a subharmonic function on $\tilde V_{z_j}$, where $s_1=\frac{1}{2}v(dd^c(\varphi_{\Omega}+a\psi_1),z_j)$. Following from Lemma \ref{l:FN1}, there exist smooth subharmonic functions $\{\varphi_l\}_{l\in\mathbb{Z}_{\ge1}}$ and $\{\tilde\psi_l\}_{l\in\mathbb{Z}_{\ge1}}$ on $\Omega$ such that $\varphi_l$ and $\tilde\psi_l$ are decreasingly convergent to $\varphi_0$ and $\psi_0$ with respect to $l$, respectively.
For any $\epsilon>0$, there exists $r_{j,2}>0$ such that  $U'_j:=\{|w_j(z)|<r_{j,2}:z\in V_{z_j}\}\Subset \tilde V_{z_j}$,
$$\sup_{z\in U'_j}|\varphi_l(z)-\varphi_l(z_j)|<\epsilon,$$ and
$$\sup_{z\in U_j}|H_j(z)-H_j(z_j)|<\epsilon,$$ where $H_j=G-2q_{z_j}\log|w_j|+\tilde\psi_l+\epsilon$ is a smooth function on $\tilde V_{z_j}$.
There exists $t_2>0$ such that $\{2q_{z_j}\log|w_j|+H_j(z_j)<-t_2\}\Subset U'_j$ and $b(t)=\tilde c(t)e^{-at}$ is increasing on $[t_2,+\infty)$, then
we get that
\begin{equation}
	\label{eq:220406b}\begin{split}
	&\int_{\{\psi<-t\}\cap (V_{z_j}\times Y)}|F|^2e^{-\varphi}\tilde c(-\psi)\\
	\geq
		&\int_{\{\pi_1^*(G+\tilde\psi_l)<-t\}\cap (V_{z_j}\times Y)}|F|^2e^{-\pi_1^*(\varphi_{\Omega}+a\psi_1)-\pi_2^*(\varphi_Y)}b(-\pi_1^*(\psi_1))\\
		\ge&\int_{\{2q_{z_j}\log|w_j|+H_j(z_j)<-t\}\times Y}|F|^2e^{\pi_1^*(-2s_1\log|w_j|-\varphi_l(z_j)-\epsilon)+\pi_2^*(-\varphi_Y)}\\
		&\times b(\pi_1^*(-2q_{z_j}\log|w_j|-H_j(z_j)))
	\end{split}
\end{equation}
for $t\ge t_2$.
 It follows from Fubini's Theorem, sub-mean value inequality of subharmonic functions and inequality \eqref{eq:220406b} that
\begin{equation}
	\label{eq:220406c}\begin{split}
		&\int_{\{\psi<-t\}\cap (V_{z_j}\times Y)}|F|^2e^{-\varphi}\tilde c(-\psi)\\
		\ge&4\pi e^{-\varphi_l(z_j)-\epsilon}
		\int_0^{e^{-\frac{t+H_j(z_j)}{2q_{z_j}}}}r^{2\tilde k_j-2s_1+1}b(-2q_{z_j}\log|r|-H_j(z_j))dr\times\int_Y|\tilde F_j|^2e^{-\varphi_Y}\\
		=&\frac{2\pi e^{-\varphi_l(z_j)-\epsilon}}{q_{z_j}}e^{-\left(\frac{\tilde k_j-s_1+1}{q_{z_j}}\right)H_j(z_j)}\int_t^{+\infty}e^{-\left(\frac{\tilde k_j-s_1+1}{q_{z_j}}\right)s}b(s)ds\times\int_Y|\tilde F_j|^2e^{-\varphi_Y}\\
		=&\frac{2\pi e^{-\varphi_l(z_j)-\epsilon}}{q_{z_j}}e^{-\left(\frac{\tilde k_j-s_1+1}{q_{z_j}}\right)H_j(z_j)}\int_t^{+\infty}e^{-\left(\frac{\tilde k_j-s_1+1}{q_{z_j}}+a\right)s}\tilde c(s)ds\times\int_Y|\tilde F_j|^2e^{-\varphi_Y}.
			\end{split}
\end{equation}
for $t\ge t_1$. Note that
\begin{displaymath}
	\begin{split}
	s_1+(1-a)q_{z_j}&=\frac{1}{2}(v(dd^c(\varphi_{\Omega}+a\psi_1),z_j)+(1-a)v(dd^c(\psi_1),z_j))	\\
	&=\frac{1}{2}(v(dd^c(\varphi_{\Omega}+\psi_1),z_j))	\\
	&=ord_{z_j}(g_0)\\
	&=k_j,
	\end{split}
\end{displaymath}
which implies that
$$\frac{\tilde k_j-s_1+1}{q_{z_j}}+a=\frac{\tilde k_j-k_j+1}{q_{z_j}}+1.$$
As $\liminf_{t\rightarrow+\infty}\frac{\int_{\{\psi<-t\}\cap (V_{z_j}\times Y)}|F|^2e^{-\varphi}\tilde c(-\psi)}{\int_t^{+\infty}\tilde c(s)e^{-s}ds}<+\infty$ and $\tilde F_j\not\equiv0$, it follows from Lemma \ref{l:c(t)e^{-at}} and $\tilde k_j-k_j+1\le0$ that $\tilde k_j-k_j+1=0$. Hence letting $\epsilon\rightarrow0+0$, inequality \eqref{eq:220406c} implies that
\begin{equation}
	\label{eq:0406c}\begin{split}
	&\liminf_{t\rightarrow+\infty}\frac{\int_{\{\psi<-t\}\cap (V_{z_j}\times Y)}|F|^2e^{-\varphi}\tilde c(-\psi)}{\int_t^{+\infty}\tilde c(s)e^{-s}ds}\\
	\ge&\frac{2\pi e^{-\varphi_l(z_j)-(1-a)(G+\psi_l-2q_{z_j}\log|w_j|)(z_j)}}{q_{z_j}}\int_Y|\tilde F_j|^2e^{-\varphi_Y}.
	\end{split}
\end{equation}
Note that \begin{displaymath}
	\begin{split}
		&\lim_{l\rightarrow+\infty}\varphi_l(z_j)+(1-a)(G+\psi_l-2q_{z_j}\log|w_j|)(z_j)\\
		=&(1-a)(G+\psi_0-2q_{z_j}\log|w_j|)(z_j)+\varphi_0(z_j)\\
		=&(1-a)(\psi_1-2q_{z_j}\log|w_j|)(z_j)+(\varphi_{\Omega}+a\psi_1-2s_1\log|w_j|)(z_j)\\
		=&(\varphi_{\Omega}+\psi_1-2k_j\log|w_j|)(z_j)\\
		=&2u_0(z_j)+2\log|d_j|,
	\end{split}
\end{displaymath}
then it follows from inequality \eqref{eq:0406c} that
\begin{equation*}
	\liminf_{t\rightarrow+\infty}\frac{\int_{\{\psi<-t\}\cap (V_{z_j}\times Y)}|F|^2e^{-\varphi}\tilde c(-\psi)}{\int_t^{+\infty}\tilde c(s)e^{-s}ds}\geq \frac{2\pi e^{-2u_0(z_j)}}{q_{z_j}|d_j|^2}\int_Y|\tilde{F}_j|^2e^{-\varphi_Y}.
\end{equation*}

Hence for any $m$ satisfying $2\le m<\gamma$, we have $\tilde k_j-k_j+1=0$ for any $j\in I_m$, and inequality \eqref{eq:0405d} implies that $$\sum_{j\in I_m}\frac{2\pi e^{-2u_0(z_j)}}{q_{z_j}|d_j|^2}\int_Y|\tilde{F}_j|^2e^{-\varphi_Y}\le \liminf_{t\rightarrow+\infty}\frac{\int_{\{\psi<-t\}}|F|^2e^{-\varphi}\tilde c(-\psi)}{\int_t^{+\infty}\tilde c(s)e^{-s}ds}\le C.$$
By the arbitrariness of $m$, we have  $\tilde{k}_j+1-k_j=0$ for any $j\in I_F$ and
		\begin{equation*}
			\sum_{j\in I_F}\frac{2\pi e^{-2u_0(z_j)}}{q_{z_j}|d_j|^2}\int_Y|\tilde{F}_j|^2e^{-\varphi_Y}\le C.
		\end{equation*}
		\end{proof}

	\subsection{Linearity on fibrations over open Riemann surfaces at inner points}In this section, we recall and present some results about the concavity  degenerating to linearity on fibrations over open Riemann surfaces at inner points.

Let $\Omega$ be an open Riemann surface, which admits a nontrival Green function $G_{\Omega}$, and let $K_{\Omega}$ be the canonical (holomorphic) line bundle on $\Omega$. Let $Y$ be an $n-1$ dimensional weakly pseudoconvex K\"{a}hler manifold. Let $M=\Omega\times Y$ be a complex manifold, and $K_M$ be the canonical line bundle on $M$. Let $\pi_1$, $\pi_2$ be the natural projections from $M$ to $\Omega$ and $Y$.
	
	Let $\tilde Z_0$ be a (closed) analytic subset of $\Omega$ and denote $Z_0:=\pi_1^{-1}(\tilde Z_0)$ be an analytic subset of $M$. Let $\psi_1$ be a negative subharmonic function on $\Omega$ such that $\psi_1(z)=-\infty$ for any $z\in \tilde Z_0$, and let $\varphi_{\Omega}$ be a Lebesgue measurable function on $\Omega$ such that $\varphi_{\Omega}+\psi_1$ is subharmonic on $\Omega$. Let $\varphi_{Y}$ be a plurisubharmonic function on $Y$. Denote that $\psi:=\pi_1^*(\psi_1)$, $\varphi=\pi_1^*(\varphi_{\Omega})+\pi_2^*(\varphi_{Y})$. Let $c$ be a positive function on $(0,+\infty)$ sucht that $\int_0^{+\infty}c(t)e^{-t}dt<+\infty$, $c(t)e^{-t}$ is decreasing on $(0,+\infty)$ and $e^{-\varphi}c(-\psi)$ has a positive lower bound on any compact subset of $M\setminus \pi_1^{-1}(E)$, where $E\subset\{\psi_1=-\infty\}$ is a discrete subset of $\Omega$.
	
	 Let $f$ be a holomorphic $(n,0)$ form on a neighborhood of $Z_0$. Denote
	\begin{flalign*}
		\begin{split}
			\inf\Bigg\{\int_{\{\psi<-t\}}|\tilde{f}|^2e^{-\varphi}&c(-\psi) :\tilde{f}\in H^0(\{\psi<-t\},\mathcal{O}(K_M))  \\
			& \& \,(\tilde{f}-f,p)\in \mathcal{O}(K_M)_{p}\otimes\mathcal{I}(\varphi+\psi)_{p}\text{ for  any } p\in Z_0  \Bigg\}
		\end{split}
	\end{flalign*}
	by $G(t)$ for any $t\in [0,+\infty)$.

Recall some notations related to open Riemann surfaces (see \cite{OF81}, see also \cite{guan-zhou13ap,GY-concavity,GMY-concavity2}). Let $P:\Delta\rightarrow \Omega$ be the universal covering from the unite disc $\Delta$ to $\Omega$. The holomorphic function $\hat{f}$  on $\Delta$ is called a multiplicative function, if there is a character $\chi$, which is the representation of the fundamental group of $\Omega$ such that $g^*(\hat{f})=\chi(g)\hat{f}$, where $|\chi|=1$ and $g$ is an element of the fundamental group of $\Omega$. Denote the set of such $\hat{f}$ by $\mathcal{O}^{\chi}(\Omega)$ .
	
	It is known that for any harmonic function $u$ of $\Omega$, there exists a $\chi_u$ and a multiplicative function $f_u\in\mathcal{O}^{\chi_u}(\Omega)$ such that $|f_u|=P^*(e^u)$. If $u_1-u_2=\log|\hat{f}|$, then $\chi_{u_1}=\chi_{u_2}$, where $u_1$ and $u_2$ are harmonic functions on $\Omega$ and $\hat{f}$ is a holomorphic function on $\Omega$. Recall that for the Green function $G_{\Omega}(\cdot,z_0)$, there exists a $\chi_{z_0}$ and a multiplicative function $f_{z_0}\in\mathcal{O}^{\chi_{z_0}}(\Omega)$ such that $|f_{z_0}(z)|=P^*(e^{G_{\Omega}(z,z_0)})$ (see \cite{suita72}).

	 Let $\tilde Z_0=\{z_j:j\in \mathbb{Z}_{\ge 0}\, \&\, 1\leq j\leq m\}$ be a finite subset of the open Riemann surface $\Omega$.
	Let $w_j$ be a local coordinate on a neighborhood $V_{z_j}\subset\subset\Omega$ of $z_j$ satisfying $w_j(z_j)=0$ for $z_j\in \tilde{Z}_0$, where $V_{z_j}\cap V_{z_k}=\emptyset$ for any  $j\neq k$. Denote that $V_0:=\bigcup_{1\leq j\leq m}V_{z_j}$. Without loss of generality, assume that $w_j(V_{z_j})=\{w\in\mathbb{C}:|w|<r_j\}$, where $r_j>0$. Let $f$ be a holomorphic $(n,0)$ form on $V_0\times Y$.
Theorem \ref{thm:concavity} implies that $G(h^{-1}(r))$ is concave with respect to $r$, where $h(t)=\int_t^{+\infty}c(s)e^{-s}ds$. 	We recall a characterization of the concavity of $G(h^{-1}(r))$ degenerating to linearity for the fibers over sets of finite points as follows.

	\begin{Theorem}[\cite{BGY-concavity5}]\label{finite-p}
		Assume that $e^{-\varphi}c(-\psi)$ has a positive lower bound on any compact subset of $M\setminus Z_0$, $G(0)\in (0,+\infty)$ and $(\psi_1-2q_{z_j}G_{\Omega}(\cdot,z_j))(z_j)>-\infty$, where $q_{z_j}=\frac{1}{2}v(dd^c(\psi_1),z_j)>0$ for any $j\in\{1,2,\ldots,m\}$. Then $G(h^{-1}(r))$ is linear with respect to $r\in (0,\int_0^{+\infty}c(t)e^{-t}dt]$ if and only if the following statements hold:
		
		$(1)$ $\psi_1=2\sum\limits_{j=1}^mq_{z_j} G_{\Omega}(\cdot,z_j)$;
		
		$(2)$ for any $j\in\{1,2,\ldots,m\}$, $f=\pi_1^*(a_jw_j^{k_j}dw_j)\wedge \pi_2^*(f_{Y})+f_j$ on $V_{z_j}\times Y$, where $a_j\in\mathbb{C}\setminus \{0\}$ is a constant, $k_j$ is a nonnegative integer, $f_{Y}$ is a holomorphic $(n-1,0)$ form on $Y$ such that $\int_{Y}|f_{Y}|^2e^{-\varphi_{Y}}\in (0,+\infty)$, and $(f_j,(z_j,y))\in \mathcal{O}(K_M)_{(z_j,y)}\otimes\mathcal{I}(\varphi+\psi)_{(z_j,y)}$ for any $j\in\{1,2,\ldots,m\}$ and $y\in Y$;
		
		$(3)$ $\varphi_{\Omega}+\psi_1=2\log |g|+2\sum\limits_{j=1}^mG_{\Omega}(\cdot,z_j)+2u$, where $g$ is a holomorphic function on $\Omega$ such that $ord_{z_j}(g)=k_j$ and $u$ is a harmonic function on $\Omega$;
		
		$(4)$ $\prod\limits_{j=1}^m\chi_{z_j}=\chi_{-u}$, where $\chi_{-u}$ and $\chi_{z_j}$ are the characters associated to the functions $-u$ and $G_{\Omega}(\cdot,z_j)$ respectively;
		
		$(5)$ for any $j\in\{1,2,\ldots,m\}$,
		\begin{equation*}
			\lim_{z\rightarrow z_j}\frac{a_jw_j^{k_j}dw_j}{gP_*\left(f_u\left(\prod\limits_{l=1}^mf_{z_l}\right)\left(\sum\limits_{l=1}^mp_l\dfrac{d{f_{z_{l}}}}{f_{z_{l}}}\right)\right)}=c_0,
		\end{equation*}
		where $c_0\in\mathbb{C}\setminus\{0\}$ is a constant independent of $j$.
	\end{Theorem}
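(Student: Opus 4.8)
The plan is to prove both implications of Theorem \ref{thm:fibra-finite} by transporting the boundary-point problem on $M=\Omega\times Y$ to an inner-point problem on $\Omega_t\times Y$, and then combining the inner-point characterization of Theorem \ref{finite-p}, the optimal $L^2$ extension theorem \ref{L2ext-finite-f}, and the sharp integral estimate of Lemma \ref{l:inte}. The key structural observation is that for $t\ge T$ the only zeros of $F$ contained in $\Omega_t=\{\tilde\Psi<-t\}\cup\tilde Z_3$ are the points of $\tilde Z_3$, so that $\psi_1^{(t)}:=\psi-2\log|F|+t$ is a negative subharmonic function on the open Riemann surface $\Omega_t$ whose polar set is exactly $\tilde Z_3=\{z_1,\dots,z_m\}$, with $\tfrac12 v(dd^c\psi_1^{(t)},z_j)=q_{z_j}-\mathrm{ord}_{z_j}(F)=:q'_j>0$. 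Under this identification $G(t')$ for $t'>t$ becomes the inner-point minimal $L^2$ integral attached to $(\Omega_t,\psi_1^{(t)})$ and the fiber weight $\varphi_Y$, with the multiplier-ideal constraint placed along $\tilde Z_3\times Y$.

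\emph{Necessity.} Assuming $G(h^{-1}(r))$ is linear, I would first apply Corollary \ref{c:necessary condition for linear of G} to obtain the unique extremal holomorphic $(n,0)$ form $\tilde F$ on $\{\Psi<-T\}$ realizing $G(t)$ for all $t\ge T$, together with the extremal energy identity $\int_{\{-t_1\le\Psi<-t_2\}}|\tilde F|^2e^{-\varphi}a(-\Psi)=C\int_{t_2}^{t_1}a(t)e^{-t}\,dt$ valid for every nonnegative measurable $a$. I would then discard the irrelevant boundary points: by Lemma \ref{lm:trivial module 1} (at $Z_2$) and Lemma \ref{lm:trivial module 2} (at $Z_1\setminus Z_3$, the place where hypotheses $(A)$/$(B)$ on $\tilde Z_1$ are needed) the local modules reduce to $I(\varphi+\Psi)_p$, so that the effective constraint sits only along $Z_3=\tilde Z_3\times Y$. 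Decomposing $\tilde F$ fiberwise on each $V_{z_j}\times Y$ via Lemma \ref{decomp} and inserting the extremal energy identity into Lemma \ref{l:inte} (applied on $\Omega_t$ with $\psi_1^{(t)}$) forces, at each $z_j$, the leading behaviour $\tilde F=\pi_1^*(a_jw_j^{k_j}dw_j)\wedge\pi_2^*(f_Y)+(\text{higher order in }w_j)$ with a single fiber form $f_Y$ such that $\int_Y|f_Y|^2e^{-\varphi_Y}\in(0,+\infty)$; combined with $(\tilde F-f)_p\in\mathcal O(K_M)_p\otimes I(\varphi+\Psi)_p$ this gives statement $(1)$. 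Transporting Theorem \ref{finite-p} to the inner-point data on $\Omega_t$ then yields the rigidity: its condition $(1)$ becomes $\psi-2\log|F|+t=2\sum_j q'_jG_{\Omega_t}(\cdot,z_j)$, which is statement $(3)$; and its factorization together with the character conditions collapse---after absorbing $2\log|F|$ and the harmonic multiplicative factors into a single-valued $g$---into the clean factorization $\varphi_\Omega+\psi=2\log|g|+2\log|F|$ with $\mathrm{ord}_{z_j}(g)=k_j+1$ (statement $(2)$) and into the common-constant limit relation of statement $(4)$.

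\emph{Sufficiency.} Conversely, assuming $(1)$--$(4)$, I would run the construction forward. Conditions $(2)$ and $(3)$ pin down $\varphi_\Omega+\psi-2\log|F|=2\log|g|$ and $\psi-2\log|F|+t=2\sum_jq'_jG_{\Omega_t}(\cdot,z_j)$ on every $\Omega_t$, so that the optimal $L^2$ extension theorem \ref{L2ext-finite-f}, applied fiberwise to the model form $\pi_1^*(a_jw_j^{k_j}dw_j)\wedge\pi_2^*(f_Y)$ prescribed by $(1)$, produces a global holomorphic extension $\tilde F$ whose energy on each $\{\Psi<-t\}$ saturates the extension bound. A direct co-area computation over the level sets of the Green function---as in the reverse of Lemma \ref{l:inte}---then evaluates $\int_{\{\Psi<-t\}}|\tilde F|^2e^{-\varphi}c(-\Psi)$ as a fixed multiple of $\int_t^{+\infty}c(s)e^{-s}\,ds=h(t)$; since this is an admissible competitor, $G(t)$ lies below a function that is linear in $r=h(t)$, and the concavity of $G(h^{-1}(r))$ from Theorem \ref{thm:concavity} (with the boundary values $\lim_{t\to T+}G(t)=G(T)$ and $\lim_{t\to+\infty}G(t)=0$) forces equality, i.e. linearity. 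Here statement $(4)$ is exactly the compatibility that lets the local models at the several $z_j$ glue into one extremal form with a common factor $f_Y$, so that the extension bound is attained at all $j$ simultaneously.

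\emph{Main obstacle.} The principal difficulty lies in the passage between the two pictures: verifying that $\psi-2\log|F|+t$ is genuinely negative subharmonic on $\Omega_t$ with polar set exactly $\tilde Z_3$ and the correct Lelong numbers $2q'_j$, that $\Omega_t$ is a bona fide open Riemann surface on which $G_{\Omega_t}$ and the associated multiplicative functions behave as required, and---most delicately---that Theorem \ref{finite-p} can be applied compatibly for every $t$ so that the rigid identity of statement $(3)$ holds on all $\Omega_t$ at once. Tracking how the harmonic and character data of Theorem \ref{finite-p} collapse into the single factorization $(2)$ and the single constant $c_0$ of $(4)$, while carrying the extra $2\log|F|$ through the Green-function normalization (responsible for the shift $\mathrm{ord}_{z_j}(g)=k_j+1$), is the heart of the bookkeeping; Lemma \ref{l:inte} under hypotheses $(A)$/$(B)$ is the technical engine matching the extremal energy to the extension bound.
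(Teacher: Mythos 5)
Your proposal does not prove the statement at hand. The statement is Theorem \ref{finite-p}: the inner-point characterization on $M=\Omega\times Y$ for a negative subharmonic $\psi_1$ with poles along the finite set $\tilde Z_0$ (no holomorphic function $F$, no boundary-point module $J(\Psi)_p$), whose conclusions $(3)$--$(5)$ are phrased through the harmonic part $u$, the characters $\chi_{z_j},\chi_{-u}$, and the multiplicative functions $f_u,f_{z_l}$ on the universal covering $P:\Delta\rightarrow\Omega$. Your text instead announces ``the plan is to prove both implications of Theorem \ref{thm:fibra-finite}'' and sketches precisely that boundary-point theorem (with $\Psi=\min\{\pi_1^*(\psi-2\log|F|),-T\}$ and the conclusions $(1)$--$(4)$ of Theorem \ref{thm:fibra-finite}). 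Worse, your necessity step explicitly invokes Theorem \ref{finite-p} itself (``transporting Theorem \ref{finite-p} to the inner-point data on $\Omega_t$ then yields the rigidity''), so read as a proof of Theorem \ref{finite-p} the argument is circular: the theorem is used as its own main engine. You also conflate the hypotheses: Theorem \ref{finite-p} assumes $(\psi_1-2q_{z_j}G_{\Omega}(\cdot,z_j))(z_j)>-\infty$ at every $z_j$, whereas the relaxation to the alternatives $(A)$/$(B)$ that your sketch relies on is the content of the separate generalization, Proposition \ref{p-finite-2}.

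Note that within this paper Theorem \ref{finite-p} carries no proof at all --- it is quoted from \cite{BGY-concavity5} and serves, together with Remark \ref{r:equivalent}, as the base case from which Proposition \ref{p-finite-2}, Proposition \ref{p:n-linearity1}, and finally Theorem \ref{thm:fibra-finite} are derived; your outline is, in effect, a paraphrase of that downstream derivation (Lemmas \ref{lm:trivial module 1}, \ref{lm:trivial module 2}, \ref{l:inner}, Corollary \ref{c:necessary condition for linear of G}, Lemma \ref{l:inte}, Theorem \ref{L2ext-finite-f}), not a proof of the quoted theorem. A genuine proof of Theorem \ref{finite-p} would have to construct the extremal form via Corollary \ref{c:necessary condition for linear of G}, run the fiberwise decomposition of Lemma \ref{decomp} and a sharp lower-bound computation of the type in Lemma \ref{l:inte} against the optimal extension of Theorem \ref{L2ext-finite-f} to force $\psi_1=2\sum_{j}q_{z_j}G_{\Omega}(\cdot,z_j)$ through Lemma \ref{l:psi=G}, and then carry out the character-theoretic analysis on the universal covering that produces the factorization $\varphi_{\Omega}+\psi_1=2\log|g|+2\sum_{j}G_{\Omega}(\cdot,z_j)+2u$, the identity $\prod_{j}\chi_{z_j}=\chi_{-u}$, and the limit condition $(5)$. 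None of this covering-space bookkeeping appears in your sketch; your ``collapse'' of $(3)$--$(5)$ into a single-valued factorization with $ord_{z_j}(g)=k_j+1$ is exactly the equivalence recorded in Remark \ref{r:equivalent}, i.e.\ a consequence of the theorem, not a route to it.
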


	\begin{Remark}[see \cite{GMY-boundary3}]\label{r:equivalent}
	The statements $(3)$, $(4)$ and $(5)$ hold if and only if the following statements hold:
	
		$(1)'$ $\varphi_{\Omega}+\psi_1=2\log|g_1|$, where $g_1$ is a holomorphic function on $\Omega$ such that $ord_{z_j}(g_1)=k_j+1$ for any $j\in\{1,2,\ldots,m\}$;

	$(2)'$  $\frac{ord_{z_j}(g_1)}{q_{z_j}}\lim_{z\rightarrow z_j}\frac{a_jw_j^{k_j}dw_j}{dg_1}=c_0$ for any $j\in\{1,2,\ldots,m\}$, where $c_0\in\mathbb{C}\backslash\{0\}$ is a constant independent of $j$;
\end{Remark}
		
		We give a generalization of Theorem \ref{finite-p}, which will be used in the proofs of Proposition \ref{p:n-linearity1} and Theorem \ref{thm:fibra-finite}.
		\begin{Proposition}
			\label{p-finite-2}Let $G(0)\in (0,+\infty)$ and $q_{z_j}=\frac{1}{2}v(dd^c(\psi_1),z_j)>0$ for any $j\in\{1,2,\ldots,m\}$. For any $j\in\{1,2,\ldots,m\}$, assume that one of the following conditions holds:
			
			$(A)$ $\varphi_{\Omega}+a\psi_1$ is subharmonic near $z_j$ for some $a\in[0,1)$;
			
			$(B)$ $(\psi_1-2q_{z_j}G_{\Omega}(\cdot,z_j))(z_j)>-\infty.$
			
		 Then $G(h^{-1}(r))$ is linear with respect to $r\in (0,\int_0^{+\infty}c(t)e^{-t}dt]$ if and only if the following statements hold:
		
		$(1)$ $\psi_1=2\sum\limits_{j=1}^mq_{z_j} G_{\Omega}(\cdot,z_j)$;
		
		$(2)$ for any $j\in\{1,2,\ldots,m\}$, $f=\pi_1^*(a_jw_j^{k_j}dw_j)\wedge \pi_2^*(f_{Y})+f_j$ on $V_{z_j}\times Y$, where $a_j\in\mathbb{C}\setminus \{0\}$ is a constant, $k_j$ is a nonnegative integer, $f_{Y}$ is a holomorphic $(n-1,0)$ form on $Y$ such that $\int_{Y}|f_{Y}|^2e^{-\varphi_{Y}}\in (0,+\infty)$, and $(f_j,(z_j,y))\in \mathcal{O}(K_M)_{(z_j,y)}\otimes\mathcal{I}(\varphi+\psi)_{(z_j,y)}$ for any $j\in\{1,2,\ldots,m\}$ and $y\in Y$;
		
		$(3)$ $\varphi_{\Omega}+\psi_1=2\log|g_1|$, where $g_1$ is a holomorphic function on $\Omega$ such that $ord_{z_j}(g_1)=k_j+1$ for any $j\in\{1,2,\ldots,m\}$;
		
		$(4)$ $\frac{ord_{z_j}(g_1)}{q_{z_j}}\lim_{z\rightarrow z_j}\frac{a_jw_j^{k_j}dw_j}{dg_1}=c_0$ for any $j\in\{1,2,\ldots,m\}$, where $c_0\in\mathbb{C}\backslash\{0\}$ is a constant independent of $j$.
		\end{Proposition}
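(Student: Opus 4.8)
The plan is to prove the two directions separately and to regard Proposition \ref{p-finite-2} as an upgrade of Theorem \ref{finite-p}, replacing the hypothesis ``condition $(B)$ at every $z_j$'' by the weaker ``condition $(A)$ or $(B)$ at each $z_j$''. The two characterizations have the same conclusion: statements $(1),(2)$ here are literally statements $(1),(2)$ of Theorem \ref{finite-p}, while statements $(3),(4)$ here are exactly the conditions $(1)',(2)'$ of Remark \ref{r:equivalent}, hence equivalent to statements $(3),(4),(5)$ of Theorem \ref{finite-p}. The only genuinely new ingredient is Lemma \ref{l:inte}, which supplies the sharp control of the layer integrals near each $z_j$ under condition $(A)$ as well as $(B)$; in Theorem \ref{finite-p} this was available only under $(B)$.

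For the sufficiency (``if'') direction, I would assume $(1)$--$(4)$ and reduce to Theorem \ref{finite-p}. Statement $(1)$, $\psi_1=2\sum_{j}q_{z_j}G_\Omega(\cdot,z_j)$, forces condition $(B)$ at every $z_j$: indeed $(\psi_1-2q_{z_j}G_\Omega(\cdot,z_j))(z_j)=2\sum_{k\neq j}q_{z_k}G_\Omega(z_j,z_k)>-\infty$ since $G_\Omega(z_j,z_k)$ is finite for $k\neq j$. Moreover, once $(1)$ holds one has $\{\psi_1=-\infty\}=\tilde Z_0$, so the exceptional set $E$ satisfies $E\subset\tilde Z_0$ and the standing assumption that $e^{-\varphi}c(-\psi)$ has a positive lower bound on compact subsets of $M\setminus\pi_1^{-1}(E)$ yields the corresponding lower bound on compact subsets of $M\setminus Z_0$. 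Thus all hypotheses of Theorem \ref{finite-p} are in force; converting $(3),(4)$ into statements $(3),(4),(5)$ of Theorem \ref{finite-p} via Remark \ref{r:equivalent} and applying Theorem \ref{finite-p} gives the linearity of $G(h^{-1}(r))$.

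For the necessity (``only if'') direction, I would proceed as follows. Since here $F\equiv1$ and $\psi_1(z)=-\infty$ on $\tilde Z_0$, this is the inner-point degeneration, so the linearity of $G(h^{-1}(r))$ together with Corollary \ref{c:necessary condition for linear of G} (with $T=0$) produces a unique extremal holomorphic $(n,0)$ form $\tilde F$ on $M$ with $(\tilde F-f)_p\in\mathcal O(K_M)_p\otimes\mathcal I(\varphi+\psi)_p$ for $p\in Z_0$, satisfying the exact layer identity $\int_{\{\psi<-t\}}|\tilde F|^2e^{-\varphi}a(-\psi)=A\int_t^{+\infty}a(s)e^{-s}ds$ for all nonnegative measurable $a$ and all $t\ge0$, where $A:=G(0)/\int_0^{+\infty}c(s)e^{-s}ds$. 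This identity (with $C=A$) is precisely the hypothesis of Lemma \ref{l:inte} applied to $F=\tilde F$, and under condition $(A)$ or $(B)$ Lemma \ref{l:inte} then yields $\tilde k_j+1=k_j$ for $j\in I_{\tilde F}$, the finiteness $\int_Y|\tilde F_{j,l}|^2e^{-\varphi_Y}<+\infty$, and the sharp lower bound $\sum_{j\in I_{\tilde F}}\frac{2\pi e^{-2u_0(z_j)}}{q_{z_j}|d_j|^2}\int_Y|\tilde F_j|^2e^{-\varphi_Y}\le A$. The matching upper bound comes from the optimal extension Theorem \ref{L2ext-finite-f} applied with the leading data of $\tilde F$ (admissible since $\tilde k_j+1=k_j$ and $u_0(z_j)>-\infty$), producing a competitor for $G(0)$ of norm at most that sum times $\int_0^{+\infty}c(s)e^{-s}ds$; hence $A$ equals the sum and the two estimates saturate.

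The heart of the argument, and the step I expect to be the main obstacle, is deducing statement $(1)$, $\psi_1=2\sum_j q_{z_j}G_\Omega(\cdot,z_j)$, under condition $(A)$. By Lemma \ref{l:green-sup2} one has $\psi_1\le 2\sum_j q_{z_j}G_\Omega(\cdot,z_j)$, and if equality failed, Lemma \ref{l:psi=G} would exhibit a set $V$ of positive measure on which $\psi_1$ is strictly dominated; I would then show that such strict domination is incompatible with the exact linear distribution of $|\tilde F|^2e^{-\varphi}$ saturating the sharp constant, a contradiction. The delicate point is that under $(A)$ the quantity $\psi_1-2q_{z_j}\log|w_j|$ may tend to $-\infty$ at $z_j$, so the direct sub-mean-value estimates used in Theorem \ref{finite-p} break down; this is exactly why all estimates must be routed through Lemma \ref{l:inte}, whose proof accommodates $(A)$ via the subharmonicity of $\varphi_\Omega+a\psi_1$ and the Forn\ae ss--Narasimhan approximation of Lemma \ref{l:FN1}. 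Once statement $(1)$ is established, condition $(B)$ holds at every $z_j$, all hypotheses of Theorem \ref{finite-p} are satisfied, and its necessity direction delivers statements $(2),(3),(4),(5)$, which Remark \ref{r:equivalent} converts into the remaining statements $(2),(3),(4)$.
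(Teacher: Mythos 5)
Your proposal is correct and follows essentially the same route as the paper: sufficiency by reduction to Theorem \ref{finite-p} via Remark \ref{r:equivalent} (noting that statement $(1)$ forces condition $(B)$ at every $z_j$), and necessity by reducing everything to proving statement $(1)$, which is obtained by combining the extremal form from Corollary \ref{c:necessary condition for linear of G}, the lower estimate of Lemma \ref{l:inte}, the matching upper estimate from Theorem \ref{L2ext-finite-f} applied to the leading terms with the weight $2\sum_j q_{z_j}G_{\Omega}(\cdot,z_j)$, and the strict-inequality contradiction from Lemma \ref{l:psi=G}. The only points left implicit — that $u_0(z_j)>-\infty$ for $j\in I_F$ follows from the finiteness of the sum, and that the tails $\sum_{l\ge k_j}$ lie in $\mathcal{I}(\varphi+\psi)$ so the extended form is a genuine competitor — are routine and handled the same way in the paper.
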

		\begin{proof}
			The sufficiency follows from Theorem \ref{finite-p} and Remark \ref{r:equivalent}. Thus, we just need to prove the necessity. It follows form Theorem \ref{finite-p} and Remark \ref{r:equivalent} that it suffices to prove $\psi_1=2\sum\limits_{j=1}^mq_{z_j} G_{\Omega}(\cdot,z_j)$.
			
			It follows from Corollary  \ref{c:necessary condition for linear of G} that there exists a holomorphic $(n,0)$ form $F$ on $M$ satisfying $(F-f,p)\in(\mathcal{O}(K_{M})\otimes\mathcal{I}(\varphi+\psi))_p$ for any $p\in Z_0$,
					\begin{equation}
				\label{eq:0407c}G(t)=\int_{\{\psi<-t\}}|F|^2e^{-\varphi}c(-\psi)
			\end{equation}
			 for any $t\ge0$ and
			$$\int_{M}|F|^2e^{-\varphi}a(-\psi)=\frac{G(0)}{\int_0^{+\infty}c(s)e^{-s}ds}\int_0^{+\infty}a(s)e^{-s}ds$$
			for any nonnegative measurable function $a(t)$ on $(0,+\infty)$.		It follows from Lemma \ref{decomp} that $F=\sum_{l\ge\tilde k_j}\pi_1^*(w^l_jdw_j)\wedge\pi_2^*(F_{j,l})$ on $V_{z_j}\times Y$ for any $j\in\{1,2,\ldots,m\}$, where $\tilde k_j$ is a nonnegative integer and $F_{j,l}$ is a holomorphic $(n-1,0)$ form on $Y$ satisfying that $\tilde F_j:=F_{j,\tilde k_j}\not\equiv0$. Denote that
		\begin{equation*}
			I_F:=\{j:1\leq j\le m \, \& \, \tilde{k}_j+1-k_j\leq 0\}.
		\end{equation*}
		Using the Weierstrass Theorem on open Riemann surfaces (see \cite{OF81}) and
	 Siu's Decomposition Theorem, we have
	\[\varphi_{\Omega}+\psi_1=2\log |g_0|+2u_0,\]
	where $g_0$ is a holomorphic function on $\Omega$ and $u_0$ is a subharmonic function on $\Omega$ such that $v(dd^cu_0,z)\in [0,1)$ for any $z\in\Omega$. There exists a nonnegative integer $k_j$ such that $d_j:=\lim_{z\rightarrow z_j}\frac{g_0(z)}{w_j^{k_j}(z)}\in\mathbb{C}\backslash\{0\}$.
		 It follows from Lemma \ref{l:inte} that  $\tilde{k}_j+1-k_j=0$ for any $j\in I_F$, $\int_Y| F_{j,l}|^2e^{-\varphi_Y}<+\infty$ for any $j,l$, and
		\begin{equation}\label{eq:0407b}
			\sum_{j\in I_F}\frac{2\pi e^{-2u_0(z_j)}}{q_{z_j}|d_j|^2}\int_Y|\tilde{F}_j|^2e^{-\varphi_Y}\le \frac{G(0)}{\int_0^{+\infty}c(s)e^{-s}ds}.
		\end{equation}
Note that $|w_j^{l}|^2e^{-\varphi_{\Omega}-\psi_1}$ is integrable near $z_j$ for any $l\ge k_j$ and any $j\in\{1,2,\ldots,m\}$, hence it follows from Lemma \ref{closedness} that $(\sum_{l\ge k_j}\pi_1^*(w^l_jdw_j)\wedge\pi_2^*(F_{j,l}),p)\in(\mathcal{O}(K_M)\otimes\mathcal{I}(\varphi+\psi))_p$ for any $p\in Z_0$.
Denote that $\tilde \psi_1:=2\sum\limits_{j=1}^mq_{z_j} G_{\Omega}(\cdot,z_j)$ and $\tilde\varphi_{\Omega}:=\varphi_{\Omega}+\psi_1-\tilde\psi_1$. Note that $\tilde\psi_1\ge \psi_1$, $\tilde\varphi_{\Omega}+\tilde\psi_1=\varphi_{\Omega}+\psi_1$ and $c(t)e^{-t}$ is decreasing on $(0,+\infty)$.
 Using Theorem \ref{L2ext-finite-f}, there exists a holomorphic $(n,0)$ form $\tilde F$ on $M$, such that $(\tilde F-F,p)\in(\mathcal{O}(K_M)\otimes\mathcal{I}(\pi_1^*(\tilde\varphi_{\Omega}+\tilde\psi_1)+\pi_2^*(\varphi_Y)))_p=(\mathcal{O}(K_M)\otimes\mathcal{I}(\varphi+\psi))_p$ for any $p\in Z_0$ and
\begin{equation}
	\label{eq:0407a}
	\begin{split}
	G(0)&\le\int_{M}|\tilde F|^2e^{-\varphi}c(-\psi)\\
	&\le\int_{M}|\tilde F|^2e^{-\pi_1^*(\tilde\varphi_{\Omega})-\pi_2^*(\varphi_Y)}c(-\pi_1^*(\tilde\psi_1)) \\
	&\le\left(\int_0^{+\infty}c(s)e^{-s}ds\right)\sum_{j\in I_F}\frac{2\pi e^{-2u_0(z_j)}}{q_{z_j}|d_j|^2}\int_Y|\tilde{F}_j|^2e^{-\varphi_Y}.\end{split}
\end{equation}
Combining equality \eqref{eq:0407c}, inequality \eqref{eq:0407b} and inequality \eqref{eq:0407a}, we obtain that
\begin{equation*}
	\int_{M}|\tilde F|^2e^{-\varphi}c(-\psi)=\int_{M}|\tilde F|^2e^{-\pi_1^*(\tilde\varphi_{\Omega})-\pi_2^*(\varphi_Y)}c(-\pi_1^*(\tilde\psi_1)).
\end{equation*}
As $\tilde F\not\equiv0$, it follows from Lemma \ref{l:psi=G} that $\psi_1=\tilde\psi_1=2\sum\limits_{j=1}^mq_{z_j} G_{\Omega}(\cdot,z_j).$

Thus, Proposition \ref{p-finite-2} holds.
		\end{proof}

				Let $\tilde Z_0=\{z_j:j\in \mathbb{Z}_{\ge1}\}$ be an infinite discrete subset of the open Riemann surface $\Omega$. Let $w_j$ be a local coordinate on a neighborhood $V_{z_j}\subset\subset\Omega$ of $z_j$ satisfying $w_j(z_j)=0$ for $z_j\in \tilde Z_0$, where $V_{z_j}\cap V_{z_k}=\emptyset$ for any  $j\neq k$. Without loss of generality, assume that $w_j(V_{z_j})=\{w\in\mathbb{C}:|w|<r_j\}$, where $r_j>0$. Denote that $V_0:=\bigcup_{j=1}^{\infty}V_{z_j}$. Let $f$ be a holomorphic $(n,0)$ form on $V_0\times Y$.
We recall a necessary condition such that $G(h^{-1}(r))$ is linear for the fibers over infinite analytic subsets as follows.
	
	\begin{Proposition}[\cite{BGY-concavity5}]\label{infinite-p}
		Assume that $e^{-\varphi}c(-\psi)$ has a positive lower bound on any compact subset of $M\setminus Z_0$, $G(0)\in (0,+\infty)$ and $(\psi_1-2q_{z_j}G_{\Omega}(\cdot,z_j))(z_j)>-\infty$, where $q_{z_j}=\frac{1}{2}v(dd^c(\psi_1),z_j)>0$ for any $j\in\mathbb{Z}_{\ge1}$. Assume that $G(h^{-1}(r))$ is linear with respect to $r\in (0,\int_0^{+\infty}c(t)e^{-t}dt]$, then the following statements hold:
		
		$(1)$ $\psi_1=2\sum\limits_{j=1}^{\infty}q_{z_j} G_{\Omega}(\cdot,z_j)$;
		
		$(2)$ for any $j\in\mathbb{Z}_{\ge1}$, $f=\pi_1^*(a_jw_j^{k_j}dw_j)\wedge \pi_2^*(f_{Y})+f_j$ on $V_{z_j}\times Y$, where $a_j\in\mathbb{C}\setminus \{0\}$ is a constant, $k_j$ is a nonnegative integer, $f_{Y}$ is a holomorphic $(n-1,0)$ form on $Y$ such that $\int_{Y}|f_{Y}|^2e^{-\varphi_{Y}}\in (0,+\infty)$, and $(f_j,(z_j,y))\in \mathcal{O}(K_M)_{(z_j,y)}\otimes\mathcal{I}(\varphi+\psi)_{(z_j,y)}$ for any $j\in\mathbb{Z}_{\ge1}$ and $y\in Y$;
		
		$(3)$ $\varphi_{\Omega}+\psi_1=2\log |g|$, where $g$ is a holomorphic function on $\Omega$ such that $ord_{z_j}(g)=k_j+1$ for any $j\in\mathbb{Z}_{\ge1}$;
		
		$(4)$ for any $j\in\mathbb{Z}_{\ge1}$,
		\begin{equation*}
			\frac{q_{z_j}}{ord_{z_j}(g)}\lim_{z\rightarrow z_j}\frac{dg}{a_jw_j^{k_j}dw_j}=c_0,
		\end{equation*}
		where $c_0\in\mathbb{C}\setminus\{0\}$ is a constant independent of $j$;
		
		$(5)$ $\sum\limits_{j\in\mathbb{Z}_{\ge1}}q_{z_j}<+\infty$.
	\end{Proposition}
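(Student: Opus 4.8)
The plan is to reduce Theorem~\ref{thm:fibra-finite} to the inner-point characterization Proposition~\ref{p-finite-2} by transplanting the extremal problem, for each level $t\ge T$, to the open Riemann surface $\Omega_t=\{\tilde\Psi<-t\}\cup\tilde Z_3$ through multiplication by $\pi_1^*(F)$. Concretely, on $\Omega_t$ I would set $\psi_1:=(\psi-2\log|F|)+t=\tilde\Psi+t$ and $\varphi_\Omega^{\mathrm{red}}:=\varphi_\Omega+2\log|F|-t$, so that $\psi_1$ is a negative subharmonic function on $\Omega_t$ equal to $-\infty$ exactly on the finite set $\tilde Z_3$, $\varphi_\Omega^{\mathrm{red}}+\psi_1=\varphi_\Omega+\psi$ is subharmonic, and $\tfrac12 v(dd^c(\psi_1),z_j)=q_{z_j}-\mathrm{ord}_{z_j}(F)=:\tilde q_{z_j}>0$ is the weight occurring in conditions $(3)$ and $(4)$. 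The key computation is that $\tilde f\mapsto\hat f:=\pi_1^*(F)\,\tilde f$ satisfies $|F|^2e^{-\varphi^{\mathrm{red}}}=e^{t}e^{-\varphi}$ (with $\varphi^{\mathrm{red}}=\pi_1^*\varphi_\Omega^{\mathrm{red}}+\pi_2^*\varphi_Y$), intertwines the weights through the shifted function $c^{\mathrm{red}}(u):=c(u+t)$ since $c^{\mathrm{red}}(-\psi_1)=c(-\Psi)$, and—because $e^{-\varphi-\Psi}=|F|^2e^{-\varphi^{\mathrm{red}}-\psi_1}$—identifies $I(\varphi+\Psi)_p$ with $\mathcal{I}(\varphi^{\mathrm{red}}+\psi_1)_p$ via $\tilde f\in I\iff\pi_1^*(F)\tilde f\in\mathcal{I}$. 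Hence $G^{\mathrm{red}}(s)=e^{t}G(s+t)$ and $h^{\mathrm{red}}(s)=e^{t}h(s+t)$, so that linearity of $G(h^{-1}(\cdot))$ is equivalent to linearity of $G^{\mathrm{red}}((h^{\mathrm{red}})^{-1}(\cdot))$; moreover $G^{\mathrm{red}}(0)=e^{t}G(t)\in(0,+\infty)$, and conditions $(A)/(B)$ transfer to the reduced data (using that $\log|F|$ is subharmonic and, via Lemma~\ref{l:green-sup}, that $G_{\Omega_t}(\cdot,z_j)-\log|w_j|$ is bounded), so Proposition~\ref{p-finite-2} applies on $\Omega_t\times Y$ with inner set $\tilde Z_3$.

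Next I would show that the constraints in $G(t)$ reduce to the finite fiber set $Z_3=\tilde Z_3\times Y$. Because $Z_0\subset\cap_{t>T}\overline{\{\Psi<-t\}}$ and $v(dd^c(\psi),z)<2\,\mathrm{ord}_z(F)$ forces $\psi-2\log|F|\to+\infty$, hence $\tilde\Psi\equiv-T$ near such $z$, the set $\tilde Z_2$ is empty; thus $\tilde Z_0=\tilde Z_1$ and the only points of $Z_0\setminus Z_3$ lie over $\tilde Z_1\setminus\tilde Z_3$, where $v(dd^c(\psi),z)=2\,\mathrm{ord}_z(F)$ and (by the same closure argument) condition $(B)$ cannot hold, so $(A)$ does. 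At such $p$ Lemma~\ref{lm:trivial module 2} gives $H_p=I(\varphi+\Psi)_p$; combined with the defining property of $H_p$ this makes the constraint $(\tilde f-f)_p\in\mathcal{O}(K_M)_p\otimes I(\varphi+\Psi)_p$ automatic for every finite-norm competitor, so $G(t)$ equals the problem constrained only on $Z_3$. To meet the lower bound $c\ge e^{t}/t^2$ required by Lemma~\ref{lm:trivial module 2} while the theorem allows a general $c$, I would invoke Remark~\ref{rem:linear}: replacing $c$ by the still-admissible $\hat c:=\max\{c,e^{t}/t^2\}$ (for which $\hat c\,e^{-t}$ is decreasing and integrable) only shrinks the feasible space, so linearity for $c$ forces linearity for $\hat c$; as conditions $(1)$--$(4)$ do not involve $c$, establishing necessity for $\hat c$ is enough.

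With the reduction in hand, both implications come from Proposition~\ref{p-finite-2}. For necessity, Corollary~\ref{c:necessary condition for linear of G} produces the unique extremal form $\tilde F$ on $\{\Psi<-T\}$, and $\pi_1^*(F)\tilde F$ is the extremal form of the reduced problem; applying the necessity half of Proposition~\ref{p-finite-2} and translating back through the dictionary yields exactly $(1)$--$(4)$. Here the local data transform by $k_j^{\mathrm{red}}=k_j+\mathrm{ord}_{z_j}(F)$ and $a_j^{\mathrm{red}}=a_j\lim_{z\to z_j}F/w_j^{\mathrm{ord}_{z_j}(F)}$, so the reduced identity $\varphi_\Omega^{\mathrm{red}}+\psi_1=2\log|g_1|$ with $\mathrm{ord}_{z_j}(g_1)=k_j^{\mathrm{red}}+1$ becomes $\varphi_\Omega+\psi=2\log|g|+2\log|F|$ with $g_1=gF$ and $\mathrm{ord}_{z_j}(g)=k_j+1$ (giving $(2)$); the reduced Green identity $\psi_1=2\sum_j\tilde q_{z_j}G_{\Omega_t}(\cdot,z_j)$ becomes $(3)$ (which in particular needs $\tilde Z_3\neq\emptyset$); and the normalization of Proposition~\ref{p-finite-2}$(4)$ becomes $(4)$, up to replacing the constant by its reciprocal (still a nonzero $j$-independent $c_0$). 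For sufficiency I note that conditions $(1)$--$(4)$ are self-reducing: condition $(3)$ makes $\psi-2\log|F|$ harmonic on $\Omega_t\setminus\tilde Z_3$, which is incompatible with the $-\infty$ behaviour a point of $\tilde Z_1\setminus\tilde Z_3\subset\tilde Z_0$ would have, so under $(1)$--$(4)$ one has $\tilde Z_0=\tilde Z_3$ and $Z_0=Z_3$ with no critical fibers; the dictionary then reconstructs the reduced conditions, Theorem~\ref{finite-p} together with Remark~\ref{r:equivalent} (the sufficiency of Proposition~\ref{p-finite-2}) gives linearity of $G^{\mathrm{red}}$, and transplanting back gives linearity of $G$, the optimal extension Theorem~\ref{L2ext-finite-f} supplying the extremal form and Remark~\ref{rem:linear} upgrading endpoint equality to linearity on the whole interval.

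The step I expect to be the main obstacle is the necessity bookkeeping around the critical fibers over $\tilde Z_1\setminus\tilde Z_3$: one must certify that their constraints are inert, which forces the interplay between the trivial-module Lemmas~\ref{lm:trivial module 1}--\ref{lm:trivial module 2}, the $c$-enlargement of Remark~\ref{rem:linear}, and the precise translation of the mixed $(A)/(B)$ hypotheses through the isomorphism Proposition~\ref{module isomorphism} and Lemma~\ref{l:1d-MIS}. A secondary but essential point is to confirm that each $\Omega_t$ is a hyperbolic open Riemann surface carrying a Green function $G_{\Omega_t}$, and that the Green-function identity $(3)$ is genuinely forced; for the latter I would run Lemma~\ref{l:psi=G} exactly as in the proof of Proposition~\ref{p-finite-2}, comparing $\int|\tilde F|^2e^{-\varphi}c(-\Psi)$ with the extension bound from Theorem~\ref{L2ext-finite-f} and using that equality there, together with $\tilde F\not\equiv0$, pins $\psi_1$ to the Green combination $2\sum_j\tilde q_{z_j}G_{\Omega_t}(\cdot,z_j)$.
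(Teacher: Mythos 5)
Your proposal does not prove the statement at hand. Proposition \ref{infinite-p} is the recalled inner-point result of \cite{BGY-concavity5} for an \emph{infinite} discrete set $\tilde Z_0=\{z_j\}_{j\in\mathbb{Z}_{\ge1}}$, with $\psi=\pi_1^*(\psi_1)$, no auxiliary holomorphic function $F$ and no boundary-type weight $\Psi=\min\{\psi-2\log|F|,-T\}$; its content is the necessity of statements $(1)$--$(5)$, including the summability condition $(5)$ $\sum_{j\ge1}q_{z_j}<+\infty$, the convergence and identification $\psi_1=2\sum_{j\ge1}q_{z_j}G_{\Omega}(\cdot,z_j)$ on all of $\Omega$, and the existence of a single holomorphic $g$ on $\Omega$ with $ord_{z_j}(g)=k_j+1$ at infinitely many points (which requires the Weierstrass theorem on open Riemann surfaces and an exhaustion argument over finite subsets, as in the paper's Lemma \ref{l:inte} with the index sets $I_m$). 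What you wrote instead is a sketch of Theorem \ref{thm:fibra-finite} and Proposition \ref{p:n-linearity1}: the dictionary obtained by multiplying by $\pi_1^*(F)$, the sets $\tilde Z_1,\tilde Z_2,\tilde Z_3$, the trivial-module Lemmas \ref{lm:trivial module 1}--\ref{lm:trivial module 2}, and an appeal to the \emph{finite-point} Proposition \ref{p-finite-2}. None of the five conclusions of Proposition \ref{infinite-p} is derived, and none of the genuinely infinite-case difficulties is touched. Worse, the argument is circular relative to the paper's architecture: Proposition \ref{infinite-p} is an input (quoted from \cite{BGY-concavity5}) to the proof of Proposition \ref{p-infinite-2}, which in turn underlies Proposition \ref{p:n-linearity1} and Theorem \ref{thm:fibra-finite}; you are running this pipeline backwards, using the downstream theorem's proof as a purported proof of the upstream proposition.

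Two internal claims in your reduction are also false as stated and would need repair even in the setting you chose. First, $v(dd^c\psi,z)<2\,ord_z(F)$ does \emph{not} force $\tilde\Psi\equiv-T$ near $z$ nor $\tilde Z_2=\emptyset$: $\psi$ can have $-\infty$ spikes accumulating at $z$ (e.g.\ $\psi=\sum_k\epsilon_k\log|z-1/k|$ with small $\epsilon_k$, $F=z$) while $v(dd^c\psi,z)=0$, so $z\in\cap_{t>T}\overline{\{\tilde\Psi<-t\}}$; this is exactly why the paper retains $\tilde Z_2$ and disposes of it via Lemma \ref{lm:trivial module 1} combined with the non-integer Lelong-number perturbation of Remark \ref{r:not integer}, rather than declaring it empty. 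Second, at points of $\tilde Z_1\setminus\tilde Z_3$ condition $(B)$ can perfectly well hold (e.g.\ $\psi=2\log|w|$, $F=w$), so your dichotomy ``$(B)$ fails, hence $(A)$ holds'' is unjustified; Lemma \ref{lm:trivial module 2} treats $(A)$ and $(B)$ as alternative hypotheses, not consequences of the geometry. For the proposition you were actually asked to prove, the correct skeleton (mirrored in the paper's proof of the generalization, Proposition \ref{p-infinite-2}) is: Corollary \ref{c:necessary condition for linear of G} produces the extremal form and the exact measure identity; the decomposition Lemma \ref{decomp} and the lower-bound Lemma \ref{l:inte} yield $\tilde k_j+1=k_j$ on $I_F$ and the summable bound $\sum_{j\in I_F}\frac{2\pi e^{-2u_0(z_j)}}{q_{z_j}|d_j|^2}\int_Y|\tilde F_j|^2e^{-\varphi_Y}\le G(0)/\int_0^{+\infty}c(s)e^{-s}ds$; Theorem \ref{L2ext-finite-f} gives the matching extension estimate for the Green-type weight; and Lemma \ref{l:psi=G} then pins $\psi_1$ to $2\sum_{j\ge1}q_{z_j}G_{\Omega}(\cdot,z_j)$, with the Weierstrass theorem and the character bookkeeping delivering $(2)$--$(5)$.
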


	We give a generalization of Proposition \ref{infinite-p}, which will be used in the proof of Proposition \ref{p:n-linearity1}.
		\begin{Proposition}
			\label{p-infinite-2}Let $G(0)\in (0,+\infty)$ and $q_{z_j}=\frac{1}{2}v(dd^c(\psi_1),z_j)>0$ for any $j\in\mathbb{Z}_{\ge1}$. For any $j\in\mathbb{Z}_{\ge1}$, assume that one of the following conditions holds:
			
			$(A)$ $\varphi_{\Omega}+a\psi_1$ is subharmonic near $z_j$ for some $a\in[0,1)$;
			
			$(B)$ $(\psi_1-2q_{z_j}G_{\Omega}(\cdot,z_j))(z_j)>-\infty.$
			
		 If $G(h^{-1}(r))$ is linear with respect to $r\in (0,\int_0^{+\infty}c(t)e^{-t}dt]$,  then the following statements hold:
		
		$(1)$ $\psi_1=2\sum\limits_{j=1}^{\infty}q_{z_j} G_{\Omega}(\cdot,z_j)$;
		
		$(2)$ for any $j\in\mathbb{Z}_{\ge1}$, $f=\pi_1^*(a_jw_j^{k_j}dw_j)\wedge \pi_2^*(f_{Y})+f_j$ on $V_{z_j}\times Y$, where $a_j\in\mathbb{C}\setminus \{0\}$ is a constant, $k_j$ is a nonnegative integer, $f_{Y}$ is a holomorphic $(n-1,0)$ form on $Y$ such that $\int_{Y}|f_{Y}|^2e^{-\varphi_{Y}}\in (0,+\infty)$, and $(f_j,(z_j,y))\in \mathcal{O}(K_M)_{(z_j,y)}\otimes\mathcal{I}(\varphi+\psi)_{(z_j,y)}$ for any $j\in\mathbb{Z}_{\ge1}$ and $y\in Y$;
		
		$(3)$ $\varphi_{\Omega}+\psi_1=2\log |g|$, where $g$ is a holomorphic function on $\Omega$ such that $ord_{z_j}(g)=k_j+1$ for any $j\in\mathbb{Z}_{\ge1}$;
		
		$(4)$ for any $j\in\mathbb{Z}_{\ge1}$,
		\begin{equation*}
			\frac{q_{z_j}}{ord_{z_j}(g)}\lim_{z\rightarrow z_j}\frac{dg}{a_jw_j^{k_j}dw_j}=c_0,
		\end{equation*}
		where $c_0\in\mathbb{C}\setminus\{0\}$ is a constant independent of $j$;
		
		$(5)$ $\sum\limits_{j\in\mathbb{Z}_{\ge1}}q_{z_j}<+\infty$.		\end{Proposition}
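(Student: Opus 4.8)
The plan is to follow the proof of Proposition \ref{p-finite-2} and reduce everything to establishing statement $(1)$. More precisely, I would first prove $\psi_1=2\sum_{j=1}^{\infty}q_{z_j}G_{\Omega}(\cdot,z_j)$, and then observe that once $(1)$ holds, condition $(B)$ is automatically satisfied at every $z_j$: near $z_j$ we have $\psi_1-2q_{z_j}G_{\Omega}(\cdot,z_j)=2\sum_{k\neq j}q_{z_k}G_{\Omega}(\cdot,z_k)$, which is the decreasing limit of the harmonic partial sums $2\sum_{k\neq j,\,k\le N}q_{z_k}G_{\Omega}(\cdot,z_k)$, so by Harnack's principle it is either harmonic or identically $-\infty$ near $z_j$; since $\psi_1\not\equiv-\infty$ and $2q_{z_j}G_{\Omega}(\cdot,z_j)$ is finite off $z_j$, the limit is not identically $-\infty$, hence is harmonic, and in particular $(\psi_1-2q_{z_j}G_{\Omega}(\cdot,z_j))(z_j)>-\infty$. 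With condition $(B)$ verified at every $z_j$, I would invoke Proposition \ref{infinite-p} to obtain all of statements $(1)$--$(5)$.

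It therefore remains to prove statement $(1)$, where I would mirror Proposition \ref{p-finite-2} almost verbatim, the only new point being the passage from finitely to infinitely many points. By Corollary \ref{c:necessary condition for linear of G} there is a holomorphic $(n,0)$ form $F$ on $M$ with $(F-f,p)\in(\mathcal{O}(K_M)\otimes\mathcal{I}(\varphi+\psi))_p$ for all $p\in Z_0$, with $G(t)=\int_{\{\psi<-t\}}|F|^2e^{-\varphi}c(-\psi)$ for all $t\ge 0$, and with $\int_M|F|^2e^{-\varphi}a(-\psi)=\frac{G(0)}{\int_0^{+\infty}c(s)e^{-s}ds}\int_0^{+\infty}a(s)e^{-s}ds$ for every nonnegative measurable $a$. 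I would decompose $F=\sum_{l\ge\tilde k_j}\pi_1^*(w_j^ldw_j)\wedge\pi_2^*(F_{j,l})$ on $V_{z_j}\times Y$ via Lemma \ref{decomp}, write $\varphi_{\Omega}+\psi_1=2\log|g_0|+2u_0$ via the Weierstrass theorem on open Riemann surfaces (see \cite{OF81}) and Siu's decomposition theorem, and set $d_j=\lim_{z\to z_j}g_0(z)/w_j^{k_j}(z)$. Since condition $(A)$ or $(B)$ holds at each $z_j$, Lemma \ref{l:inte} applies with $C=\frac{G(0)}{\int_0^{+\infty}c(s)e^{-s}ds}$ and yields $\tilde k_j+1-k_j=0$ for $j\in I_F$, the finiteness of each slice integral $\int_Y|F_{j,l}|^2e^{-\varphi_Y}$, and the crucial uniform bound $\sum_{j\in I_F}\frac{2\pi e^{-2u_0(z_j)}}{q_{z_j}|d_j|^2}\int_Y|\tilde F_j|^2e^{-\varphi_Y}\le C$.

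I would then put $\tilde\psi_1:=2\sum_{j=1}^{\infty}q_{z_j}G_{\Omega}(\cdot,z_j)$ and $\tilde\varphi_{\Omega}:=\varphi_{\Omega}+\psi_1-\tilde\psi_1$, noting by Lemma \ref{l:green-sup2} that $\tilde\psi_1$ is subharmonic with $\tilde\psi_1\ge\psi_1$ and $\tilde\varphi_{\Omega}+\tilde\psi_1=\varphi_{\Omega}+\psi_1$. The uniform bound from Lemma \ref{l:inte} is exactly what guarantees that the series in the hypothesis of the optimal $L^2$ extension theorem converges, so Theorem \ref{L2ext-finite-f} applies despite $\tilde Z_0$ being infinite, producing a holomorphic $(n,0)$ form $\tilde F$ on $M$ with $(\tilde F-F,p)\in(\mathcal{O}(K_M)\otimes\mathcal{I}(\varphi+\psi))_p$ for all $p\in Z_0$ and $\int_M|\tilde F|^2e^{-\varphi}c(-\psi)\le\int_M|\tilde F|^2e^{-\pi_1^*(\tilde\varphi_{\Omega})-\pi_2^*(\varphi_Y)}c(-\pi_1^*(\tilde\psi_1))\le\big(\int_0^{+\infty}c(s)e^{-s}ds\big)\sum_{j\in I_F}\frac{2\pi e^{-2u_0(z_j)}}{q_{z_j}|d_j|^2}\int_Y|\tilde F_j|^2e^{-\varphi_Y}$. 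Comparing this with the infimum inequality $G(0)\le\int_M|\tilde F|^2e^{-\varphi}c(-\psi)$ and the bound of the previous paragraph forces the equality $\int_M|\tilde F|^2e^{-\varphi}c(-\psi)=\int_M|\tilde F|^2e^{-\pi_1^*(\tilde\varphi_{\Omega})-\pi_2^*(\varphi_Y)}c(-\pi_1^*(\tilde\psi_1))$; since $G(0)>0$ gives $\tilde F\not\equiv 0$, Lemma \ref{l:psi=G} (applied with $l(t)=c(t)e^{-t}$) then forces $\psi_1=\tilde\psi_1$, which is statement $(1)$.

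The main obstacle is precisely the infinite nature of $\tilde Z_0$: both the application of Theorem \ref{L2ext-finite-f} and the final mass comparison require the series over $j\in I_F$ to converge, and this convergence is not available a priori but is delivered exactly by the uniform estimate in Lemma \ref{l:inte}. A secondary point needing care is the Harnack-principle step upgrading statement $(1)$ to condition $(B)$ at every $z_j$, which is what legitimizes the closing appeal to Proposition \ref{infinite-p} (whose conclusion $(5)$, $\sum_j q_{z_j}<+\infty$, is then obtained for free).
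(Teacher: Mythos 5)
Your proposal is correct and follows essentially the same route as the paper: reduce to statement $(1)$ via Proposition \ref{infinite-p}, and prove $(1)$ by combining Corollary \ref{c:necessary condition for linear of G}, Lemma \ref{decomp}, the Weierstrass/Siu decomposition, Lemma \ref{l:inte}, Theorem \ref{L2ext-finite-f} and Lemma \ref{l:psi=G}, exactly as in Proposition \ref{p-finite-2}. The only difference is that you spell out (via Harnack's principle) why statement $(1)$ restores condition $(B)$ at every $z_j$ before invoking Proposition \ref{infinite-p}, a verification the paper leaves implicit.
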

		\begin{proof}
		The proof of Proposition \ref{p-infinite-2} is similar to Proposition \ref{p-finite-2}.
		
			It follows form Proposition \ref{infinite-p} that it suffices to prove $\psi_1=2\sum\limits_{j=1}^{+\infty}q_{z_j} G_{\Omega}(\cdot,z_j)$.
			
			It follows from Corollary  \ref{c:necessary condition for linear of G} that there exists a holomorphic $(n,0)$ form $F$ on $M$ satisfying $(F-f,p)\in(\mathcal{O}(K_{M})\otimes\mathcal{I}(\varphi+\psi))_p$ for any $p\in Z_0$,
					\begin{equation}
				\label{eq:0407d}G(t)=\int_{\{\psi<-t\}}|F|^2e^{-\varphi}c(-\psi)
			\end{equation}
			 for any $t\ge0$ and
			$$\int_{M}|F|^2e^{-\varphi}a(-\psi)=\frac{G(0)}{\int_0^{+\infty}c(s)e^{-s}ds}\int_0^{+\infty}a(s)e^{-s}ds$$
			for any nonnegative measurable function $a(t)$ on $(0,+\infty)$.		It follows from Lemma \ref{decomp} that $F=\sum_{l\ge\tilde k_j}\pi_1^*(w^l_jdw_j)\wedge\pi_2^*(F_{j,l})$ on $V_{z_j}\times Y$ for any $j\in\mathbb{Z}_{\ge1}$, where $\tilde k_j$ is a nonnegative integer and $F_{j,l}$ is a holomorphic $(n-1,0)$ form on $Y$ satisfying that $\tilde F_j:=F_{j,\tilde k_j}\not\equiv0$. Denote that
		\begin{equation*}
			I_F:=\{j:j\in\mathbb{Z}_{\ge1} \, \& \, \tilde{k}_j+1-k_j\leq 0\}.
		\end{equation*}
		Using the Weierstrass Theorem on open Riemann surfaces (see \cite{OF81}) and
 Siu's Decomposition Theorem, we have
	\[\varphi_{\Omega}+\psi_1=2\log |g_0|+2u_0,\]
	where $g_0$ is a holomorphic function on $\Omega$ and $u_0$ is a subharmonic function on $\Omega$ such that $v(dd^cu_0,z)\in [0,1)$ for any $z\in\Omega$. There exists a nonnegative integer $k_j$ such that $d_j:=\lim_{z\rightarrow z_j}\frac{g_0(z)}{w_j^{k_j}(z)}\in\mathbb{C}\backslash\{0\}$.
		 It follows from Lemma \ref{l:inte} that  $\tilde{k}_j+1-k_j=0$ for any $j\in I_F$, $\int_Y| F_{j,l}|^2e^{-\varphi_Y}<+\infty$ for any $j,l$, and
		\begin{equation}\label{eq:0407e}
			\sum_{j\in I_F}\frac{2\pi e^{-2u_0(z_j)}}{q_{z_j}|d_j|^2}\int_Y|\tilde{F}_j|^2e^{-\varphi_Y}\le \frac{G(0)}{\int_0^{+\infty}c(s)e^{-s}ds}.
		\end{equation}
Note that $|w_j^{l}|^2e^{-\varphi_{\Omega}-\psi_1}$ is integrable near $z_j$ for any $l\ge k_j$ and any $j\in\mathbb{Z}_{\ge1}$, hence it follows from Lemma \ref{closedness} that $(\sum_{l\ge k_j}\pi_1^*(w^l_jdw_j)\wedge\pi_2^*(F_{j,l}),p)\in(\mathcal{O}(K_M)\otimes\mathcal{I}(\varphi+\psi))_p$ for any $p\in Z_0$.
Denote that $\tilde \psi_1:=2\sum\limits_{j=1}^{+\infty}q_{z_j} G_{\Omega}(\cdot,z_j)$ and $\tilde\varphi_{\Omega}:=\varphi_{\Omega}+\psi_1-\tilde\psi_1$. Note that $\tilde\psi_1\ge \psi_1$, $\tilde\varphi_{\Omega}+\tilde\psi_1=\varphi_{\Omega}+\psi_1$ and $c(t)e^{-t}$ is decreasing on $(0,+\infty)$.
 Using Theorem \ref{L2ext-finite-f}, there exists a holomorphic $(n,0)$ form $\tilde F$ on $M$, such that $(\tilde F-F,p)\in(\mathcal{O}(K_M)\otimes\mathcal{I}(\pi_1^*(\tilde\varphi_{\Omega}+\tilde\psi_1)+\pi_2^*(\varphi_Y)))_p=(\mathcal{O}(K_M)\otimes\mathcal{I}(\varphi+\psi))_p$ for any $p\in Z_0$ and
\begin{equation}
	\label{eq:0407f}
	\begin{split}
	G(0)&\le\int_{M}|\tilde F|^2e^{-\varphi}c(-\psi)\\
	&\le\int_{M}|\tilde F|^2e^{-\pi_1^*(\tilde\varphi_{\Omega})-\pi_2^*(\varphi_Y)}c(-\pi_1^*(\tilde\psi_1)) \\
	&\le\left(\int_0^{+\infty}c(s)e^{-s}ds\right)\sum_{j\in I_F}\frac{2\pi e^{-2u_0(z_j)}}{q_{z_j}|d_j|^2}\int_Y|\tilde{F}_j|^2e^{-\varphi_Y}.\end{split}
\end{equation}
Combining equality \eqref{eq:0407d}, inequality \eqref{eq:0407e} and inequality \eqref{eq:0407f}, we obtain that
\begin{equation*}
	\int_{M}|\tilde F|^2e^{-\varphi}c(-\psi)=\int_{M}|\tilde F|^2e^{-\pi_1^*(\tilde\varphi_{\Omega})-\pi_2^*(\varphi_Y)}c(-\pi_1^*(\tilde\psi_1)).
\end{equation*}
As $\tilde F\not\equiv0$, it follows from Lemma \ref{l:psi=G} that $\psi_1=\tilde\psi_1=2\sum\limits_{j=1}^{+\infty}q_{z_j} G_{\Omega}(\cdot,z_j).$

Thus, Proposition \ref{p-infinite-2} holds.
		\end{proof}

\section{A necessary condition for $G(h^{-1}(r))$ is linearity}\label{sec:n}

Let $\Omega$  be an open Riemann surface, which admits a nontrivial Green function $G_{\Omega}$. Let $Y$ be an $n-1$ dimensional weakly pseudoconvex K\"ahler manifold, and let $K_Y$ be the canonical  line bundle on $Y$. Let $M=\Omega \times Y$ be an $n-$dimensional complex manifold. Let $\pi_{1}$ and $\pi_2$ be the natural projections from $M$ to $\Omega$ and $Y$ respectively. Let $K_M$ be the canonical line bundle on $M$.

  Let $\psi$ be a subharmonic function on $\Omega$. Let $\varphi_\Omega$ be a Lebesgue measurable function on $\Omega$ such that $\varphi_\Omega+\psi$ is subharmonic function on $\Omega$. Let $F$ be a holomorphic function on $\Omega$. Let $T\in [-\infty,+\infty)$.
Denote that
$$\tilde{\Psi}:=\min\{\psi-2\log|F|,-T\}.$$
For any $z \in \Omega$ satisfying $F(z)=0$,
we set $\tilde{\Psi}(z)=-T$. Denote that $\Psi:=\pi_1^*(\tilde\Psi)$ on $M$. Let $\varphi_Y$ be a plurisubharmonic function on $Y$. Denote that $\varphi:=\pi_1^*(\varphi_{\Omega})+\pi_2^*(\varphi_Y)$.

 Let ${Z}_0\subset M$ be a subset of $\cap_{t>T} \overline{\{\Psi<-t\}}$ such that there exists a subset $\tilde Z_0$ of $\Omega$ such that  $Z_0=\tilde{Z}_0\times Y$. Denote that $\tilde{Z}_1:=\{z\in \tilde{Z}_0:v(dd^c(\psi),z)\ge2ord_{z}(F)\}$ and $\tilde{Z}_2:=\{z\in \tilde{Z}_0:v(dd^c(\psi),z)<2ord_{z}(F)\},$ where $d^c=\frac{\partial-\bar\partial}{2\pi\sqrt{-1}}$ and $v(dd^c(\psi),z)$ is the Lelong number of $dd^c(\psi)$ at $z$ (see \cite{Demaillybook}).
  Denote that  $\tilde{Z}_3:=\{z\in \tilde{Z}_0:v(dd^c(\psi),z)>2ord_{z}(F)\}$. Note that $\{\tilde{\Psi}<-t\}\cup \tilde{Z}_3$ is an open Riemann surface for any $t\ge T$. Denote $Z_1:=\tilde{Z}_1\times Y$, $Z_2:=\tilde{Z}_2\times Y$ and $Z_3:=\tilde{Z}_3\times Y$ respectively.

Let $c(t)$ be a positive function on $(0,+\infty)$ such that $c(t)e^{-t}$ is decreasing on $(T,+\infty)$, $c(t)e^{-t}$ is integrable near $+\infty$, and $c(-\Psi)e^{-\varphi}$ has a positive lower bound on $K\cap\{\Psi<-T\}$ for any compact subset $K\subset M\backslash \pi_1^{-1}(E)$, where $E$  is an analytic subset of $\Omega$ such that $E\subset\{\tilde\Psi=-\infty\}$.

Let $f$ be a holomorphic $(n,0)$ form on $\{\Psi<-t_0\}\cap V$, where $V\supset Z_0$ is an open subset of $M$ and $t_0>T$
is a real number.
Denote
\begin{equation}
\label{def of g(t) for boundary pt}
\begin{split}
\inf\Bigg\{ \int_{ \{ \Psi<-t\}}|\tilde{f}|^2&e^{-\varphi}c(-\Psi): \tilde{f}\in
H^0(\{\Psi<-t\},\mathcal{O} (K_{M})  ) \\
&\&\, (\tilde{f}-f)_{p}\in
\mathcal{O} (K_{M})_{p} \otimes I(\varphi+\Psi)_{p}\text{ for any }  p\in Z_0 \Bigg\}
\end{split}
\end{equation}
by $G(t;c,\Psi,\varphi,I(\varphi+\Psi),f)$, where $t\in[T,+\infty)$.
Without misunderstanding, we denote $G(t;c,\Psi,\varphi,I(\varphi+\Psi),f)$ by $G(t)$ for simplicity.

Note that there exists a subharmonic function $\psi_1$ on $\Omega_1:= \{\tilde\Psi<-T\}\cup \tilde Z_3$ such that $\psi_1+2\log|F|=\psi$. Denote that $M':=\Omega_1\times Y\subset M$ and $\tilde\psi:=\pi_1^*(\psi_1)$.

For any $z\in \Omega_1$, if $F(z)=0$, for any $y\in Y$, we have $\Psi((z,y))\not=-\infty$, then we know that $e^{-\varphi}c(-\Psi)=e^{-\varphi}c(-\tilde\psi)$ has a positive lower bound on $K\backslash (\{z\}\times Y)\subset K\cap\{\Psi<-T\}$, where $K\Subset M$ is a neighborhood of $(z,y)$.
Combining $e^{-\varphi}c(-\tilde\psi)\le Ce^{-\varphi-\tilde\psi}=Ce^{-\pi_1^*(\varphi_{\Omega}+\psi-2\log|F|)-\pi_2^*(\varphi_Y)}$ on $K$, we have $v(dd^c(\varphi_{\Omega}+\psi),z)\ge 2ord_{z}(F)$. Hence we have $\varphi_{\Omega}+\psi_1$ is a subharmonic function on $\Omega_1$.

For any $z\in \tilde Z_3$, if $F(z)\not=0$, we know that  $I(\varphi+\Psi)_{(z,y)}=\mathcal{I}(\varphi+\tilde \psi)_{(z,y)}$ is an ideal of $\mathcal{O}_{M,(z,y)}$ for any $y\in Y$. For any $z\in\tilde Z_3$, if $F(z)=0$, we know that  $h_{(z,y)}\in I(\varphi+\Psi)_{(z,y)}$ if and only if there exists a holomorphic extension $\tilde h$ of $h$ near $(z,y)$ such that $(\tilde h,(z,y))\in \mathcal{I}(\varphi+\tilde\psi)_{(z,y)}$, where   $y\in Y$ and  $h_{(z,y)}\in I(\varphi+\Psi)_{(z,y)}$.

Let $f_1$ be a holomorphic $(n,0)$ form on $V_1$, where $V_1\supset Z_3$ is an open subset of $M'$. Denote
\begin{equation*}
\begin{split}
\inf\Bigg\{ \int_{ \{ \tilde\psi<-t\}}|\tilde{f}|^2&e^{-\varphi}c(-\tilde\psi): \tilde{f}\in
H^0(\{\tilde\psi<-t\},\mathcal{O} (K_{M})  ) \\
&\&\, (\tilde{f}-f_1,p)\in
\mathcal{O} (K_{M})_{p} \otimes \mathcal{I}(\varphi+\tilde\psi)_p,\text{ for any }  p\in Z_3 \Bigg\}
\end{split}
\end{equation*}
by $G_{f_1}(t)$, where $t\in[T,+\infty)$.

Note that $e^{-\varphi}c(-\tilde\psi)$ has a positive lower bound on any compact subset of $M'\backslash\big(\pi_1^{-1}(E\cup \tilde Z_3)\big)$ and $E\cup \tilde Z_3\subset\{\psi_1=-\infty\}$. Let $f_2$ be a holomorphic $(n,0)$ form on $V\cap(\{\Psi<-t_0\}\cup Z_3)=V\cap\{\tilde\psi<-t_0\}$ such that $(f_2)_p\in\mathcal{O}(K_{M})_p\otimes H_p$ for any $p\in Z_0$, where $H_p=\{h_p\in J(\Psi)_p:\int_{\{\Psi<-t\}\cap U}|h|^2e^{-\varphi}c(-\Psi)dV_M<+\infty$ for some $t>T$ and some neighborhood $U$ of $p\}$, $dV_M$ is a continuous volume form on $M$, $V\supset Z_0$ is an open subset of $M$ and $t_0>T$
is a real number. Theorem \ref{thm:concavity} shows that
$G(h^{-1}(r);c,\Psi,\varphi,I(\varphi+\Psi),f_2)$ and $G_{f_2}(h^{-1}(r))$ are concave with respect to $r$, where  $h(t)=\int_{t}^{+\infty}c(s)e^{-s}ds$. We give a relationship between $G(h^{-1}(r);c,\Psi,\varphi,I(\varphi+\Psi),f_2)$ and $G_{f_2}(h^{-1}(r))$, which will be used in the proof of Proposition \ref{p:n-linearity1}.

\begin{Lemma}\label{l:inner}
	If $H_p=I(\varphi+\Psi)_p$ for any $p\in Z_0\backslash Z_3$, then $G(t;c,\Psi,\varphi,I(\varphi+\Psi),f_2)=\tilde G_{f_2}(t)$ for any $t\ge T$.
\end{Lemma}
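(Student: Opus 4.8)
The plan is to prove the stated identity by showing the two minimal $L^2$ integrals have the same value for each fixed $t\ge T$, via two inequalities between the corresponding admissible families. First I would record two elementary facts. On $\{\Psi<-t\}$ with $t\ge T$ one has $\tilde\Psi=\psi-2\log|F|=\psi_1$, so that $\Psi=\tilde\psi$ and $c(-\Psi)=c(-\tilde\psi)$ there; moreover $\{\tilde\psi<-t\}=\{\Psi<-t\}\cup\pi_1^{-1}(\tilde Z_3\cap\{F=0\})$, and the added set $\pi_1^{-1}(\tilde Z_3\cap\{F=0\})$ is a union of slices $\{z\}\times Y$ over the discrete set $\tilde Z_3\cap\{F=0\}\subset\Omega$, hence has measure zero in $M$. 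Therefore, for any $(n,0)$ form holomorphic on $\{\tilde\psi<-t\}$, its integral over $\{\tilde\psi<-t\}$ against $e^{-\varphi}c(-\tilde\psi)$ equals its integral over $\{\Psi<-t\}$ against $e^{-\varphi}c(-\Psi)$.

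For the inequality $G_{f_2}(t)\le G(t;c,\Psi,\varphi,I(\varphi+\Psi),f_2)$, I would take any admissible $\tilde f$ for the right-hand problem: $\tilde f\in H^0(\{\Psi<-t\},\mathcal O(K_M))$ with finite integral and $(\tilde f-f_2)_p\in\mathcal O(K_M)_p\otimes I(\varphi+\Psi)_p$ for every $p\in Z_0$. The key step is to extend $\tilde f$ holomorphically across $\pi_1^{-1}(\tilde Z_3\cap\{F=0\})$. For each $p\in Z_3$ the membership $(\tilde f-f_2)_p\in\mathcal O(K_M)_p\otimes I(\varphi+\Psi)_p$, together with the description of $I(\varphi+\Psi)_p$ at points of $Z_3$ recalled before the definition of $G_{f_1}$, provides a holomorphic extension of $\tilde f-f_2$ near $p$ whose germ lies in $\mathcal O(K_M)_p\otimes\mathcal I(\varphi+\tilde\psi)_p$. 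Since $f_2$ is holomorphic on $V\cap\{\tilde\psi<-t_0\}\supset Z_3$, the form $\tilde f=(\tilde f-f_2)+f_2$ extends holomorphically near every such $p$, and by uniqueness of holomorphic continuation from the dense open subset $\{\Psi<-t\}$ these local extensions patch to a holomorphic form on $\{\tilde\psi<-t\}$. This extension is admissible for $G_{f_2}(t)$ (its $Z_3$-constraint holds by construction), and its integral equals that of $\tilde f$ by the null-set remark; taking the infimum gives the inequality.

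For the reverse inequality $G(t;c,\Psi,\varphi,I(\varphi+\Psi),f_2)\le G_{f_2}(t)$, I would take any admissible $\tilde f$ for $G_{f_2}(t)$, namely $\tilde f\in H^0(\{\tilde\psi<-t\},\mathcal O(K_M))$ with finite integral and $(\tilde f-f_2,p)\in\mathcal O(K_M)_p\otimes\mathcal I(\varphi+\tilde\psi)_p$ for $p\in Z_3$, and consider its restriction to $\{\Psi<-t\}$. I must check the full set of constraints of the left-hand problem at every $p\in Z_0$. At $p\in Z_3$ the constraint follows from the same equivalence between $\mathcal I(\varphi+\tilde\psi)_p$ and $I(\varphi+\Psi)_p$. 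At $p\in Z_0\backslash Z_3$ I would invoke the hypothesis $H_p=I(\varphi+\Psi)_p$: since $\tilde f$ is holomorphic on $\{\Psi<-t\}$ near $p$ and $\int_{\{\Psi<-t\}\cap V_0}|\tilde f|^2e^{-\varphi}c(-\Psi)dV_M<+\infty$, one has $(\tilde f)_p\in\mathcal O(K_M)_p\otimes H_p=\mathcal O(K_M)_p\otimes I(\varphi+\Psi)_p$; likewise $(f_2)_p\in\mathcal O(K_M)_p\otimes H_p=\mathcal O(K_M)_p\otimes I(\varphi+\Psi)_p$ by the choice of $f_2$, whence $(\tilde f-f_2)_p\in\mathcal O(K_M)_p\otimes I(\varphi+\Psi)_p$ holds automatically (and vacuously at those $p$ where $J(\Psi)_p$ is trivial). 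Thus $\tilde f|_{\{\Psi<-t\}}$ is admissible for the left-hand problem with the same integral, and taking the infimum yields the inequality. Combining the two inequalities gives $G(t;c,\Psi,\varphi,I(\varphi+\Psi),f_2)=G_{f_2}(t)$ for all $t\ge T$.

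The step I expect to be the main obstacle is the holomorphic extension in the first inequality. Because $\pi_1^{-1}(\tilde Z_3\cap\{F=0\})$ is a hypersurface rather than a thin set, the extension of $\tilde f$ across it is not a removable-singularity phenomenon but must be produced from the module membership at $Z_3$; the delicate points are to verify that the description of $I(\varphi+\Psi)_p$ at $Z_3$ genuinely yields a holomorphic (not merely $L^2$) extension of $\tilde f-f_2$, and that the germ-wise extensions obtained at different points are mutually compatible, so that they glue to a single holomorphic form on $\{\tilde\psi<-t\}$.
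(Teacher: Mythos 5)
Your proof is correct and has the same two-inequality skeleton as the paper's. The direction in which you restrict a competitor from $\{\tilde\psi<-t\}$ to $\{\Psi<-t\}$ and deduce the constraint at $p\in Z_0\backslash Z_3$ from $(\tilde f)_p,(f_2)_p\in\mathcal O(K_M)_p\otimes H_p$ together with the hypothesis $H_p=I(\varphi+\Psi)_p$ is exactly the paper's argument. The only real divergence is in the extension step you flagged as the main obstacle: the paper extends an admissible $\tilde f$ across a slice $\{z\}\times Y$ with $z\in\tilde Z_3\cap\{F=0\}$ by an $L^2$ removable-singularity argument, using that $v(dd^c\psi,z)>2\,\mathrm{ord}_z(F)$ forces $e^{-\varphi}c(-\Psi)$ to have a positive lower bound on $(V'\backslash\{z\})\times Y_0\subset\{\Psi<-t\}$, so finiteness of $\int|\tilde f|^2e^{-\varphi}c(-\Psi)$ makes $\tilde f$ locally square-integrable across the hypersurface and hence holomorphically extendable; you instead extract the extension from the membership $(\tilde f-f_2)_p\in\mathcal O(K_M)_p\otimes I(\varphi+\Psi)_p$ at $p\in Z_3$, which over $\{F=0\}$ encodes by definition a holomorphic extension of $\tilde f-f_2$ with germ in $\mathcal O(K_M)_p\otimes\mathcal I(\varphi+\tilde\psi)_p$, and then add back $f_2$. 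Both mechanisms work: the paper's gives the extension and its $Z_3$-constraint in one stroke without unwinding germ-level identities, while yours avoids invoking the lower bound of the weight. One point to make explicit in your version is that the germ identity furnished by the membership only guarantees agreement with $\tilde f-f_2$ on $\{\Psi<-t''\}\cap V''$ for some possibly large $t''>t$, so the patching genuinely needs the identity theorem on the connected sets $(D\backslash\{z\})\times Y'\subset\{\Psi<-t\}$ surrounding each slice — this is what your appeal to "uniqueness of holomorphic continuation from the dense open subset" amounts to, so it is a presentational rather than a mathematical gap.
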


\begin{proof}
	 For any $t\ge T$ and holomorphic $(n,0)$ form $\tilde f$ on $\{\tilde\psi<-t\}$ satisfying $(\tilde f-f_2,p)\in
\mathcal{O} (K_{M})_{p} \otimes \mathcal{I}(\varphi+\tilde\psi)_p$ for any   $p\in Z_3 $ and $\int_{\{\tilde\psi<-t\}}|\tilde f|^2e^{-\varphi}c(-\tilde\psi)<+\infty$, it follows from $(f_2)_p\in
\mathcal{O} (K_{M})_{p} \otimes H_{p}$ for any $p\in Z_0$ and $H_p=I(\varphi+\Psi)_p$ for any $p\in Z_0\backslash Z_3$ that $(\tilde f-f_2)_p\in
\mathcal{O} (K_{M})_{p} \otimes I(\varphi+\Psi)_p$ for any $z\in Z_0$.
As $\mu(Z_3)=0$, where $\mu$ is the Lebesgue measure on $M$,   the definitions of $G(t;c,\Psi,\varphi,I(\varphi+\Psi),f_2)$ and $G_{f_2}(t)$ show that $G(t;c,\Psi,\varphi,I(\varphi+\Psi),f_2)\le\tilde G_{f_2}(t)$ for any $t\ge T$.
	
	For any $t\ge T$, let  $\tilde f$ be a holomorphic $(n,0)$ form on $\{\Psi<-t\}$ satisfying $(\tilde f-f_2)_{p}\in
\mathcal{O} (K_{M})_{p} \otimes I(\varphi+\Psi)_p$ for any $z\in Z_0$ and $\int_{\{\Psi<-t\}}|\tilde f|^2e^{-\varphi}c(-\Psi)<+\infty$.
For any $p=(z,y)\in  Z_3\backslash\{\Psi<-t\}$, we have $F(z)=0$. Note that $v(dd^c(\psi),z)>2ord_{z}(F)$, then $e^{-\varphi}c(-\Psi)$ has a positive lower bound on $K\backslash(\{z\}\times Y) \subset\{\Psi<-t\}$, where $K\Subset M$ is a neighborhood  of $(z,y)$. Following from $\int_{\{\Psi<-t\}}|\tilde f|^2e^{-\varphi}c(-\Psi)<+\infty$, we get that there exists a holomorphic $(n,0)$ form $\tilde f_1$ on $\{\tilde\psi<-t\}=\{\Psi<-t\}\cup Z_3$ such that $\tilde f_1=\tilde f$ on $\{\Psi<-t\}$, which implies that $(\tilde f_1-f_2,p)\in\mathcal{O}(K_{M})_p\otimes \mathcal{I}(\varphi+\tilde\psi)_p$ for any $p\in Z_3$ and $\int_{\{\tilde\psi<-t\}}|\tilde f_1|^2e^{-\varphi}c(-\tilde\psi)=\int_{\{\Psi<-t\}}|\tilde f|^2e^{-\varphi}c(-\Psi)$. By the definitions of $G(t;c,\Psi,\varphi,I(\varphi+\Psi),f_2)$ and $G_{f_2}(t)$, we have  $G(t;c,\Psi,\varphi,I(\varphi+\Psi),f_2)\ge\tilde G_{f_2}(t)$ for any $t\ge T$.
	
	Thus, Lemma \ref{l:inner} holds.
\end{proof}

Note that $\tilde Z_3$ is a discrete subset of $\Omega$.
Let $\tilde Z_3=\{z_j:1\le j<\gamma\}$, where $\gamma\in\mathbb{Z}_{\ge2}\cup\{+\infty\}$. Let $w_j$ be a local coordinate on a neighborhood $V_{z_j}\Subset\Omega$ of $z_j$ satisfying that $w_j(z_j)=0$ for any $z_j\in \tilde Z_3$, where $V_{z_j}\cap V_{z_k}=\emptyset$ for any $j\not=k$.
We give a necessary condition for the concavity of $G(h^{-1}(r))$ degenerating to linearity.
\begin{Proposition}
	\label{p:n-linearity1}
For any $z\in \tilde Z_1$, assume that one of the following conditions holds:
	
	$(A)$ $\varphi_{\Omega}+a\psi$ is subharmonic near $z$ for some $a\in[0,1)$;
	
	$(B)$ $(\psi-2q_z\log|w|)(z)>-\infty$, where $q_z=\frac{1}{2}v(dd^c(\psi),z)$ and $w$ is a local coordinate on a neighborhood of $z$ satisfying that $w(z)=0$.
	
	If there exists $t_1\ge T$ such that $G(t_1)\in(0,+\infty)$ and $G(h^{-1}(r))$ is linear with respect to $r\in(0,\int_T^{+\infty}c(s)e^{-s}ds)$, then the following statements hold:
	
	$(1)$ $f=\pi_1^*(a_jw_j^{k_j}dw_j)\wedge\pi_2^*(f_Y)+f_j$ on $(V_{z_j}\times Y)\cap\{\Psi<-t_0\}\cap V$, where $a_j\in\mathbb{C}\backslash\{0\}$, $k_j$ is a nonnegative integer, $f_Y$ is a holomorphic $(n-1,0)$ form on $Y$ satisfying $\int_Y|f_Y|^2e^{-\varphi_Y}\in(0,+\infty)$, and $(f_j)_p\in
\mathcal{O} (K_{\Omega})_{p} \otimes I(\varphi+\Psi)_{p}$ for any $p\in \{z_j\}\times Y$;
	
	$(2)$ $\varphi_{\Omega}+\psi=2\log|g|+2\log|F|$, where $g$ is a holomorphic function on $\{\tilde\Psi<-T\}\cup \tilde Z_3\subset\Omega$ such that $ord_{z_j}(g)=k_j+1$ for any $1\le j<\gamma$;
		
	$(3)$ $\tilde Z_3\not=\emptyset$ and $\psi=2\sum_{1\le j<\gamma}\big(q_{z_j}-ord_{z_j}(F)\big)G_{\Omega_t}(\cdot,z_j)+2\log|F|-t$ on $\Omega_t$ for any $t\ge T$, where $\Omega_t=\{\tilde\Psi<-t\}\cup \tilde Z_3\subset\Omega$  and $G_{\Omega_t}$ is the Green function on $\Omega_t$;
	
	$(4)$ $\frac{q_{z_j}-ord_{z_j}(F)}{ord_{z_j}(g)}\lim_{z\rightarrow z_j}\frac{dg}{a_jw_j^{k_j}dw_j}=c_0$ for any $1\le j<\gamma$, where $c_0\in\mathbb{C}\backslash\{0\}$ is a constant independnet of $j$;
	
	$(5)$ $\sum_{1\le j<\gamma}\big(q_{z_j}-ord_{z_j}(F)\big)<+\infty$.
\end{Proposition}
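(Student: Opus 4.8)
The plan is to reduce the boundary-point problem to an inner-point problem on the fibration $M'=\Omega_1\times Y$ over the open Riemann surface $\Omega_1=\{\tilde\Psi<-T\}\cup\tilde Z_3$, which carries the subharmonic function $\psi_1=\psi-2\log|F|$, and then to apply Proposition \ref{p-finite-2} when $\tilde Z_3$ is finite and Proposition \ref{p-infinite-2} when $\tilde Z_3$ is infinite. The first step is to make the module constraints away from $Z_3$ vacuous: for $p\in Z_1\setminus Z_3$ this is Lemma \ref{lm:trivial module 2} under the hypotheses $(A)$/$(B)$, while for $p\in Z_2$ it is Lemma \ref{lm:trivial module 1}. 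This yields $H_p=I(\varphi+\Psi)_p$ for every $p\in Z_0\setminus Z_3$, the one point needing care being the verification of the weight hypotheses of these lemmas against the given $c$.

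Next I would produce the extremal form. Since $G(h^{-1}(r))$ is linear and $G(t_1)\in(0,+\infty)$, Corollary \ref{c:necessary condition for linear of G} supplies a unique holomorphic $(n,0)$ form $\tilde F$ on $\{\Psi<-T\}$ with $G(t)=\int_{\{\Psi<-t\}}|\tilde F|^2e^{-\varphi}c(-\Psi)$ for all $t\ge T$, together with the precise level-set mass identity. Because $e^{-\varphi}c(-\Psi)$ has a positive lower bound near $Z_3$ off $\{\Psi=-\infty\}$ and $\tilde F$ has finite total mass, $\tilde F$ extends holomorphically across $Z_3$; setting $f_2=\tilde F$ and noting that $(\tilde F-f)_p$ lies in $\mathcal O(K_M)_p\otimes I(\varphi+\Psi)_p$, the admissible classes for $f$ and for $\tilde F$ coincide, so that $G(t)=G(t;c,\Psi,\varphi,I(\varphi+\Psi),\tilde F)$. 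Combining this with the first step, Lemma \ref{l:inner} gives $G(t)=\tilde G_{\tilde F}(t)$ for all $t\ge T$, and hence $\tilde G_{\tilde F}(h^{-1}(r))$ is linear.

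I would then fix a level $t\ge T$ and view $\tilde G_{\tilde F}$ as an inner-point minimal integral on the open Riemann surface $\Omega_t=\{\tilde\Psi<-t\}\cup\tilde Z_3$ equipped with the negative subharmonic function $\psi_1+t$ (which is negative on $\Omega_t$ and $-\infty$ on $\tilde Z_3$), translating the hypotheses $(A)$/$(B)$ for $\psi$ into the corresponding hypotheses for $\psi_1$ by means of $\psi_1=\psi-2\log|F|$ and the harmonicity of $G_{\Omega_t}(\cdot,z_j)-\log|w_j|$ near $z_j$. Applying Proposition \ref{p-finite-2} or \ref{p-infinite-2}, and using $\tfrac12 v(dd^c(\psi_1+t),z_j)=q_{z_j}-\mathrm{ord}_{z_j}(F)$ and $\varphi_\Omega+\psi_1=\varphi_\Omega+\psi-2\log|F|$, I would read off statements $(1)$, $(2)$, $(4)$ and $(5)$ directly, while the identity $\psi_1+t=2\sum_j(q_{z_j}-\mathrm{ord}_{z_j}(F))G_{\Omega_t}(\cdot,z_j)$ rearranges into statement $(3)$. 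Nonvanishing of $\tilde Z_3$ then follows, since otherwise the inner problem carries no constraint and $\tilde G_{\tilde F}\equiv0$, contradicting $G(t_1)>0$.

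The main obstacle is twofold. First, the $t$-dependence in statement $(3)$: the inner-point propositions apply level by level on the varying domain $\Omega_t$ with its own Green function $G_{\Omega_t}$, so I must argue that the single extremal $\tilde F$ produces the Green-function identity simultaneously for every $t\ge T$; this is where the uniqueness of $\tilde F$ and the mass identity of Corollary \ref{c:necessary condition for linear of G} are essential to patch together the level-wise conclusions. Second, the translation of condition $(A)$ for $\psi$ into an admissible condition for $\psi_1$ at points $z_j\in\tilde Z_3$ where $F$ vanishes is delicate, because subtracting $2\log|F|$ removes a positive point mass of $dd^c$; here one must exploit the already-established subharmonicity of $\varphi_\Omega+\psi_1$ on $\Omega_1$ together with the strict inequality $v(dd^c\psi,z_j)>2\,\mathrm{ord}_{z_j}(F)$, treating the borderline case $v(dd^c(\varphi_\Omega+\psi_1),z_j)=0$ separately.
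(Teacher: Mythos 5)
Your overall route is the same as the paper's: trivialize the module at $Z_0\setminus Z_3$ via Lemmas \ref{lm:trivial module 1} and \ref{lm:trivial module 2}, extend a representative of the class across $Z_3$, pass to the inner-point problem on $\Omega_t\times Y$ via Lemma \ref{l:inner}, and invoke Propositions \ref{p-finite-2} and \ref{p-infinite-2}. (Your worry about the $t$-dependence in statement $(3)$ is a non-issue: linearity on the full range restricts to linearity on each subrange, so the inner-point propositions are simply applied at every level $t$, each with its own $\Omega_t$ and $G_{\Omega_t}$; no patching argument is needed.) However, there is a genuine gap at the very first step. Lemma \ref{lm:trivial module 1} carries the hypothesis $v(dd^c(\psi),z_0)+v(dd^c(\psi+\varphi_{\Omega}),z_0)\notin\mathbb{Z}$ at points of $\tilde Z_2$, which is \emph{not} among the hypotheses of Proposition \ref{p:n-linearity1}, so the lemma cannot be cited directly. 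The caveat you record concerns only the growth hypotheses on $c$ (which are indeed handled by Remark \ref{rem:linear}); the Lelong-number hypothesis is a separate obstruction, and the paper removes it by the substitution of Remark \ref{r:not integer}: replacing $(\varphi_{\Omega},\psi,c)$ by $(\varphi_{\Omega}+b(\psi-2\log|F|),(1-b)\psi+2b\log|F|,\tilde c)$ for a suitable $b\in(0,1)$, which leaves $e^{-\varphi}c(-\Psi)$ and $\varphi+\Psi$ unchanged on $\{\Psi<-T\}$ but shifts the sum of Lelong numbers off $\mathbb{Z}$, using that $2\,ord_{z_0}(F)-v(dd^c(\psi),z_0)>0$ on $\tilde Z_2$. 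Without this idea the argument stalls at $Z_2$.

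The second delicate point you flag --- points $z_j\in\tilde Z_3$ with $v(dd^c(\varphi_{\Omega}+\psi_1),z_j)=0$, where neither condition $(A)$ nor $(B)$ can be verified for $\psi_1$ --- is also left unresolved in your plan, and it does require an argument. The paper restricts the constraint set to $\tilde Z_3':=\{z\in\tilde Z_3:v(dd^c(\varphi_{\Omega}+\psi_1),z)>0\}$, shows via Lemmas \ref{decomp}, \ref{decomp integrable} and \ref{closedness} that at the remaining points every competitor with finite weighted norm automatically lies in $\mathcal{O}(K_M)_p\otimes\mathcal{I}(\varphi+\tilde\psi)_p$ (so those constraints are vacuous and $\tilde G=G_{\tilde f_{t_1}}$), applies the inner-point propositions with constraint set $\tilde Z_3'$, and only afterwards recovers $\tilde Z_3=\tilde Z_3'$ from the resulting Green-function identity together with Lemma \ref{l:green-sup2}. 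At the points of $\tilde Z_3'$ where $(B)$ fails, condition $(A)$ for $\psi_1$ is then obtained from condition $(A)$ for $\psi$ precisely because $v(dd^c(\varphi_{\Omega}+\psi_1),z_j)>0$ permits choosing $a'\in[a,1)$ with $v(dd^c(\varphi_{\Omega}+a'\psi),z_j)-2a'\,ord_{z_j}(F)>0$. These two missing ingredients are the substantive content of the proof beyond the (correct) skeleton you describe.
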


\begin{proof}
	It follows from Remark \ref{rem:linear} that
 we can assume that $c(t)\ge \frac{e^t}{t^2}$ near $+\infty$. Lemma \ref{lm:trivial module 2} shows that $H_p=I(\varphi+\Psi)_p$ for any $p\in Z_1\backslash Z_3.$

	For any $z_0\in\tilde Z_2$, it follows from Lemma \ref{lm:trivial module 1} and the following remark that $H_p=I(\varphi+\Psi)_p$ for any $p\in\{z_0\}\times Y$.
	\begin{Remark}
		\label{r:not integer}
		Let $b\in(0,1)$. Let $\tilde\varphi_{\Omega}=\varphi_{\Omega}+b(\psi-2\log|F|)$ and $\tilde\psi=(1-b)\psi+2b\log|F|$. Denote that $\tilde\Psi:=\min\{\pi_1^*(\tilde\psi-2\log| F|),(1-b)T\}=(1-b)\Psi$.
		Let $\tilde c(t)=c\left(\frac{t}{1-b}\right)e^{-\frac{bt}{1-b}}$ be a function on $((1-b)T,+\infty)$, and we have $\tilde c(t)\ge 1$ near $+\infty$. It is clear that $\tilde\psi$ and $\tilde\varphi_{\Omega}+\tilde\psi=\varphi_{\Omega}+\psi$ are subharmonic functions. Denote that $\tilde\varphi:=\pi_1^*(\tilde\varphi_{\Omega})+\pi_2^*(\varphi_Y)$.
		
		 Note that $e^{-\tilde\varphi}\tilde c(-\tilde\Psi)=e^{-\pi_1^*(\varphi_{\Omega}+b(\psi-2\log|F|))-\pi_2^*(\varphi_Y)}c(-\Psi)e^{b\Psi}=e^{-\varphi}c(-\Psi)$ on $\{\Psi<-T\}$, $\varphi+\Psi=\tilde\varphi+\tilde\Psi$ on $\{\Psi<-t\}$ and $\tilde c(t)e^{-t}=c\left(\frac{t}{1-b}\right)e^{-\frac{t}{1-b}}$.
		As $z_0\in\tilde Z_2=\{z\in\tilde Z_0:2ord_{z}(F)>v(dd^c(\psi),z)\}$, we can choose $b\in(0,1)$ such that $v(dd^c(\tilde\psi),z_0)+v(dd^c(\tilde\varphi_{\Omega}+\tilde\psi),z_0)=v(dd^c(\psi),z_0)+v(dd^c(\varphi_{\Omega}+\psi),z_0)+b(2ord_{z_0}(F)-v(dd^c(\psi),z_0))\not\in \mathbb{Z}$.
	\end{Remark}
	
Thus, we have $H_p=I(\varphi+\Psi)_p$ for any $p\in Z_0\backslash Z_3.$

It follows from Lemma \ref{characterization of g(t)=0} that there exists a holomorphic $(n,0)$ form $f_{t_1}$ on $\{\Psi<-t_1\}$ such that $(f_{t_1}-f)_{p}\in\mathcal{O}(K_{M})_p\otimes I(\varphi+\Psi)_p$ for any $p\in Z_0$ and $\int_{\{\Psi<-t_1\}}|f_{t_1}|^2e^{-\varphi}c(-\Psi)<+\infty$, which implies that $(f_{t_1})_{p}\in\mathcal{O}(K_{\Omega})_p\otimes H_p$ for any $p\in Z_0$.

Note that there exists a subharmonic function $\psi_1$ on $\Omega_1:= \{\tilde\Psi<-T\}\cup \tilde Z_3$ such that $\psi_1+2\log|F|=\psi$. Denote that $M':=\Omega_1\times Y=\{\Psi<-T\}\cup Z_3\subset M$ and $\tilde\psi:=\pi_1^*(\psi_1)$.   For any $(z_0,y_0)\in  Z_3\backslash\{\Psi<-t_1\}$, we have $F(z_0)=0$. Note that $v(dd^c(\psi),z_0)>2ord_{z_0}(F)$, then $e^{-\varphi}c(-\Psi)$ has a positive lower bound on $(V'\backslash\{z_0\})\times Y_0\subset\{\Psi<-t\}$, where $V'\Subset\Omega_1$ is a neighborhood  of $z_0$ and $Y_0$ is a neighborhood of $y_0$. Following from $\int_{\{\Psi<-t_1\}}| f_{t_1}|^2e^{-\varphi}c(-\Psi)<+\infty$, we get that there exists a holomorphic $(n,0)$ form $\tilde f_{t_1}$ on $\{\tilde\psi<-t_1\}=\{\Psi<-t_1\}\cup Z_3$ such that $\tilde f_{t_1}= f_{t_1}$ on $\{\Psi<-t_1\}$, which implies that  $(\tilde f_{t_1}-f)_p\in\mathcal{O}(K_{\Omega})_p\otimes I(\varphi+\Psi)_p$ for any $p\in Z_0$. By the definition of $G(t;c,\Psi,\varphi,I(\varphi+\Psi),f)$, we have  $G(t;c,\Psi,\varphi,I(\varphi+\Psi),f)=G(t;c,\Psi,\varphi,I(\varphi+\Psi),\tilde f_{t_1})$ for any $t\ge T$. It follows from Lemma \ref{l:inner} and $H_p=I(\varphi+\Psi)_p$ for any $p\in Z_0\backslash Z_3$ that $G_{\tilde f_{t_1}}(t)=G(t;c,\Psi,\varphi,I(\varphi+\Psi),\tilde f_{t_1})$ for any $t\ge T$, which implies that $G_{\tilde f_{t_1}}(h^{-1}(r))$ is linear with respect to $r$. Denote that $\tilde Z'_3:=\{z\in \tilde Z_3:v(dd^c(\varphi_{\Omega}+\psi_1),z)>0\}$ and $Z'_3:=\pi_1^{-1}(\tilde Z'_3)$. Denote
\begin{equation*}
\begin{split}
\inf\Bigg\{ \int_{ \{ \tilde\psi<-t\}}|\tilde{f}|^2&e^{-\varphi}c(-\tilde\psi): \tilde{f}\in
H^0(\{\tilde\psi<-t\},\mathcal{O} (K_{M})  ) \\
&\&\, (\tilde{f}-\tilde f_{t_1},p)\in
\mathcal{O} (K_{M})_{p} \otimes \mathcal{I}(\varphi+\tilde\psi)_p,\text{ for any }  p\in Z'_3 \Bigg\}
\end{split}
\end{equation*}
by $\tilde G(t)$, where $t\in[T,+\infty)$.  For any holomorphic $(n,0)$ form $\hat f$ on $\{\tilde\psi<-t\}$ satisfying $\int_{\{\tilde\psi<-t\}}|\hat f|^2e^{-\varphi}c(-\tilde\psi)<+\infty$ and any $p=(z_0,y_0)\in \tilde Z_3\backslash \tilde Z'_3$, it follows from Lemma \ref{decomp} and Lemma \ref{decomp integrable} that $\hat f|_{U_{z_0}\times Y}=\sum_{j\ge0}\pi_1^*(w^jdw_j)\wedge\pi_2^*(F_j)$, where $w$ is a local coordinate on a neighborhood $V_{z_0}\Subset\{\psi_1<-t\}\subset\Omega_1$ satisfying $w(z_0)=0$ and $U_{z_0}=\{|w(z)|<1\}\Subset V_{z_0}$, $F_j$ is holomorphic $(n-1,0)$ form on $Y$ satisfying $(F_j,y_0)\in(\mathcal{O}(K_Y)\otimes\mathcal{I}(\varphi_Y))_{y_0}$. Note that $\tilde Z_3\backslash \tilde Z'_3=\{z\in \tilde Z_3:v(dd^c(\varphi_{\Omega}+\psi_1),z)=0\}$, then it follows from Lemma \ref{closedness} that
 $(\hat f,p)\in
\mathcal{O} (K_{M})_{p} \otimes \mathcal{I}(\varphi+\tilde\psi)_p$. Hence, we have $\tilde G(h^{-1}(r))=G_{\tilde f_{t_1}}(h^{-1}(r))$ is linear with respect to $r$.

For any $z_0\in\tilde Z'_3$, if $(\psi_1-2q'_{z_0}\log|w|)(z)=-\infty$, where $q'_{z_0}=\frac{1}{2}v(dd^c(\psi_1),z_0)$ and $w$ is a local coordinate on a neighborhood of $z_0$ satisfying that $w(z_0)=0$, we have $(\psi-2q_{z_0}\log|w|)(z_0)=-\infty$ (where $q_{z_0}=\frac{1}{2}v(dd^c(\psi),z_0)$), thus there exists $a\in[0,1)$ such that $\varphi_{\Omega}+a\psi$ is subharmonic near $z_0$. As $v(dd^c(\varphi_{\Omega}+\psi_1),z_0)=v(dd^c(\varphi_{\Omega}+\psi),z_0)-2ord_{z_0}(F)>0$, there exists $a'\in[a,1)$ such that $v(dd^c(\varphi_{\Omega}+a'\psi))-2a'ord_{z_0}(F)>0$, which implies that $\varphi_{\Omega}+a'\psi_1$ is subharmonic near $z_0$.
As $G(t_1;c,\Psi,\varphi,I(\varphi+\Psi),\tilde f_{t_1})=\tilde G(t_1)\in(0,+\infty)$, we have $\tilde Z'_3\not=\emptyset$.  It follows from Proposition \ref{p-finite-2} and Proposition \ref{p-infinite-2} (replace $M$, $\psi$, $Z_0$ and $c(\cdot)$ by $\{\tilde\psi<-t\}=\{\Psi<-t\}\cup Z_3$, $\tilde\psi+t$, $Z'_3$ and $c(\cdot+t)$ respectively, where $t>T$) that the following statements hold:

$(a)$ $\psi_1+t=2\sum_{z\in \tilde Z'_3}q'_zG_{\Omega_t}(\cdot,z)$ on $\Omega_t=\{\psi_1<-t\}$ for any $t>T$;

$(b)$  $\tilde f_{t_1}=\pi_1^*(a_jw_j^{k_j}dw_j)\wedge \pi_2^*(f_Y)+\tilde f_j$ on $V_{z_j}\times Y$ for any $z_j\in \tilde Z'_3$,  where $a_j\in\mathbb{C}\setminus \{0\}$ is a constant, $k_j$ is a nonnegative integer, $f_{Y}$ is a holomorphic $(n-1,0)$ form on $Y$ such that $\int_{Y}|f_{Y}|^2e^{-\varphi_{Y}}\in (0,+\infty)$, and $(\tilde f_j,(z_j,y))\in \mathcal{O}(K_M)_{(z_j,y)}\otimes\mathcal{I}(\varphi+\tilde\psi)_{(z_j,y)}$ for any $z_j\in \tilde Z'_3$ and $y\in Y$;

$(c)$ $\varphi_{\Omega}+\psi_1=2\log|g|$, where $g$ is a holomorphic function on $\Omega_1= \{\Psi<-T\}\cup \tilde Z_3$ such that $ord_z(g)=k_j+1$ for any $z\in \tilde Z'_3$;

$(d)$ $\frac{q'_{z_j}}{ord_{z_j}(g)}\lim_{z\rightarrow z_j}\frac{dg}{a_jw_j^{k_j}dw_j}=c_0$ for any $z_j\in \tilde Z'_3$, where $c_0\in\mathbb{C}\backslash\{0\}$ is a constant independent of $z_j$;

$(e)$ $\sum_{z\in \tilde Z'_3}q'_z<+\infty$.

By definition of $\psi_1$, we know that $q'_z=q_z-ord_{z}(F)$ for any $z\in \Omega_1$.
It follows from $(a)$, Lemma \ref{l:green-sup2} and $\tilde Z_3\in\{z\in\tilde Z_0:v(dd^c(\psi),z)>2ord_z(F)\}$ that $\tilde Z_3=\tilde Z'_3\not=\emptyset$ and $\psi=2\sum_{1\le j<\gamma}\big(q_{z_j}-ord_{z_j}(F)\big)G_{\Omega_t}(\cdot,z_j)+2\log|F|-t$ on $\Omega_t$ for any $t\ge T$. Note that $(\tilde f_{t_1}-f)_p\in\mathcal{O}(K_M)_p\otimes I(\varphi+\Psi)_p$ for any $p\in Z_0$, and $(\tilde{f}_j,p)\in \mathcal{O}(K_M)_{p}\otimes\mathcal{I}(\varphi+\tilde\psi)_{p}$ implies $(\tilde f_j)_p\in \mathcal{O}(K_M)_{p}\otimes {I}(\varphi+\Psi)_{p}$ for any $Z_3$. Let $f_j=f-\pi_1^*(a_jw_j^{k_j}dw_j)\wedge \pi_2^*(f_Y)$ on $(V_{z_j}\times Y)\cap\{\Psi<-t_0\}\cap V$, hence $(f_j)_p=(f-\tilde f_{t_1}+\tilde f_j)_p\in \mathcal{O}(K_M)_p\otimes I(\varphi+\Psi)_p$ for any $p\in\{z_j\}\times Y$. It follows from $(c)$ and $\psi=\psi_1+2\log|F|$ that $\varphi_{\Omega}+\psi=2\log|g|+2\log|F|$.

Thus, the five statements in Proposition \ref{p:n-linearity1} hold.
\end{proof}

\section{Proof of Theorem \ref{thm:fibra-finite}}

 The necessity of Theorem \ref{thm:fibra-finite} follows from Proposition \ref{p:n-linearity1}. In the following, we prove the sufficiency.

As $\psi=2\sum_{1\le j\le m}\big(q_{z_j}-ord_{z_j}(F)\big)G_{\Omega_t}(\cdot,z_j)+2\log|F|-t$ on $\Omega_t$ and $\Psi=\min\{\pi_1^*(\psi-2\log|F|),T\}$, it follows from Lemma \ref{l:G-compact} that
$$Z_0=Z_0\cap(\cap_{s>T}\overline{\{\Psi<-s\}})=\{z_j:j\in\{1,2,\ldots,m\}\}\times Y=Z_3.$$
   It follows from Lemma \ref{characterization of g(t)=0} that there exists a holomorphic $(n,0)$ form $f_{t_1}$ on $\{\Psi<-t_1\}$ such that $(f_{t_1}-f)_{p}\in\mathcal{O}(K_{M})_p\otimes I(\varphi+\Psi)_p$ for any $p\in Z_0$ and $\int_{\{\Psi<-t_1\}}|f_{t_1}|^2e^{-\varphi}c(-\Psi)<+\infty$, which implies that $(f_{t_1})_{p}\in\mathcal{O}(K_{\Omega})_p\otimes H_p$ for any $p\in Z_0$, where $t_1>0$.
Note that there exists a subharmonic function $\psi_1$ on $\Omega_1:= \{\tilde\Psi<-T\}\cup \tilde Z_3$ such that $\psi_1+2\log|F|=\psi$. Denote that $M':=\Omega_1\times Y=\{\Psi<-T\}\cup Z_3\subset M$ and $\tilde\psi:=\pi_1^*(\psi_1)$.   For any $(z_0,y_0)\in  Z_3\backslash\{\Psi<-t_1\}$, we have $F(z_0)=0$. Note that $v(dd^c(\psi),z_0)>2ord_{z_0}(F)$, then $e^{-\varphi}c(-\Psi)$ has a positive lower bound on $(V'\backslash\{z_0\})\times Y_0\subset\{\Psi<-t\}$, where $V'\Subset\Omega_1$ is a neighborhood  of $z_0$ and $Y_0$ is a neighborhood of $y_0$. Following from $\int_{\{\Psi<-t_1\}}| f_{t_1}|^2e^{-\varphi}c(-\Psi)<+\infty$, we get that there exists a holomorphic $(n,0)$ form $\tilde f_{t_1}$ on $\{\tilde\psi<-t_1\}=\{\Psi<-t_1\}\cup Z_3$ such that $\tilde f_{t_1}= f_{t_1}$ on $\{\Psi<-t_1\}$, which implies that  $(\tilde f_{t_1}-f)_p\in\mathcal{O}(K_{\Omega})_p\otimes I(\varphi+\Psi)_p$ for any $p\in Z_0$. Following from the definition of $G_{\tilde f_{t_1}}(t)$ in Section \ref{sec:n},   it follows from Lemma \ref{l:inner} and $Z_0=Z_3$ that $G_{\tilde f_{t_1}}(t)=G(t)$ for any $t\ge T$. Using Proposition \ref{p-finite-2} (replace $M$, $c(\cdot)$ and $\psi$ by $M'$, $c(\cdot+\tilde{t})$ and $\pi_1^*(\psi_1)+\tilde{t}$ respectively, where $\tilde{t}>T$), we know that $G_{\tilde f_{t_1}}(h_{\tilde{t}}^{-1}(r)+\tilde{t})$ is linear with respect to $r\in (0,\int_{0}^{+\infty}c(s+\tilde{t})e^{-s}ds)$ for any $\tilde{t}> T$, where $h_{\tilde{t}}(t)=\int_t^{+\infty}c(s+\tilde{t})e^{-s}ds=e^{\tilde{t}}\int_{t+\tilde{t}}^{+\infty}c(s)e^{-s}ds=e^{\tilde{t}}h(t+\tilde{t})$. Note that $G(h^{-1}(r))=G_{\tilde f_{t_1}}(h^{-1}(r))=G_{\tilde f_{t_1}}(h_{\tilde{t}}^{-1}(e^{\tilde{t}}r)+\tilde{t})$ for any $r\in(0,\int_{\tilde{t}}^{+\infty}c(s)e^{-s}ds)$. Hence we have $G(h^{-1}(r))$ is linear with respect to $r\in (0,\int_{0}^{+\infty}c(s)e^{-s}ds)$.

Thus, Theorem \ref{thm:fibra-finite} holds.


\vspace{.1in} {\em Acknowledgements}. The first author and the second author were supported by National Key R\&D Program of China 2021YFA1003103.
The first author was supported by NSFC-11825101, NSFC-11522101 and NSFC-11431013. The authors would like to thank Shijie Bao for checking the manuscript and point out some typos.

\bibliographystyle{references}
\bibliography{xbib}

\end{document}